\def\@settitle{\begin{center}
\large
\baselineskip14\p@\relax
\bfseries
\@title
\end{center}
}
\providecommand*{\approxident}{\mathrel{\mathpalette\@approxident\sim}} 
\newcommand*{\@approxident}[2]{\sbox0{$#1\vcenter{}$}
\sbox2{$\m@th#1\equiv$}\dimen2=\dimexpr\ht2 - \ht0\relax
\sbox4{$\m@th#1\sim$}\dimen4=\dimexpr\ht4-\ht0\relax
\dimen0=\dimexpr-\ht4-\dp4+\dimen2\relax
\vcenter{\offinterlineskip \copy4 \kern\dimen0\copy4\kern\dimen0\copy4\ifdim\dp4=\z@
\kern\dimexpr -\ht0+\dimen4\relax\fi}} 
\newcolumntype{Y}{>{\centering\arraybackslash}X}
\newcounter{NN}\numberwithin{NN}{section}
\renewcommand\labelenumi{\rm (\roman{enumi})}
\renewcommand\theenumi{\rm (\roman{enumi})}
\theoremstyle{plain}
\newtheorem{theorem}[subsection]{Theorem}
\newtheorem{stheorem}[equation]{Theorem}
\newtheorem{lemma}[subsection]{Lemma}
\newtheorem{sproposition}[equation]{Proposition}
\newtheorem{sproposition-definition}[equation]{Proposition-Definition}
\newtheorem{proposition}[subsection]{Proposition}
\newtheorem{scorollary}[equation]{Corollary}
\newtheorem{slemma}[equation]{Lemma}
\theoremstyle{definition}
\newtheorem{sremark}[equation]{Remark}
\newtheorem{sexample}[equation]{Example}
\newtheorem{pusto}[subsection]{}
\newcommand{\NE}{\overline{\operatorname{NE}}}
\newcommand{\EFF}{{\operatorname{Eff}}}
\newcommand{\p}{\mathrm{p}_{\mathrm{a}}}
\newcommand{\dd}{\mathrm{d}}
\newcommand{\Sing}{\operatorname{Sing}}
\newcommand{\Pic}{\operatorname{Pic}}
\newcommand{\Cl}{\operatorname{Cl}}
\newcommand{\Bs}{\operatorname{Bs}}
\newcommand{\rk}{\operatorname{rk}}
\newcommand{\PP}{\mathbb{P}}
\newcommand{\QQ}{\mathbb{Q}}
\newcommand{\ZZ}{\mathbb{Z}}
\newcommand{\CC}{\mathbb{C}}
\newcommand{\FF}{\mathbb{F}}
\newcommand{\NNN}{{\mathscr{N}}}
\newcommand{\OOO}{{\mathscr{O}}}
\newcommand{\EEE}{{\mathscr{E}}}
\newcommand{\LLL}{{\mathscr{L}}}
\newcommand{\HHH}{{\mathscr{H}}}
\newcommand{\type}[1]{$\mathrm{#1}$}
\newcommand{\xref}[1]{\textup{\ref{#1}}}
\newcommand{\g}{{\mathrm{g}}}
\begin{document}
\author{Yuri Prokhorov}
\thanks{The study has been funded within the framework of the HSE University Basic Research Program and the Russian Academic Excellence Project ``5-100''.}
\address{Steklov Mathematical Institute of Russian Academy of Sciences, Moscow, Russia
\newline
Faculty of Mathematics, Moscow State Lomonosov University, Russia
\newline
Laboratory of Algebraic Geometry, National Research University Higher School of Economics, Moscow, Russia
}
\email{prokhoro@mi-ras.ru}
\title{Rationality of Fano threefolds\\ with terminal Gorenstein singularities, I}

\begin{abstract}
We classify some special classes of non-rational Fano threefolds with terminal singularities. In particular, all such hyperelliptic and trigonal varieties are found.
\end{abstract}
\maketitle

\section {Introduction}
The question of rationality of varieties with ample anticanonical class is a classic problem going back to the works of G. Fano.
Currently, this problem is basically solved for three-dimensional \emph{nonsingular} Fano varieties, see
\cite{Beauville1977}, \cite{Clemens-Griffiths}, \cite{Iskovskih-Manin-1971b}, \cite{IP99}.
On the other hand, from the modern minimal model program point of view one has to consider Fano varieties with singularities.
In this paper we discuss the rationality of Fano threefolds with \emph{terminal Gorenstein} singularities.
We concentrate on the case of varieties with Picard number 1. In this situation, the most studied case is the the case of quartics in $\PP^4$
\cite{Pukhlikov-1989-e}, \cite{Corti2004a}, \cite{Kaloghiros2012}. Double spaces branched in a quartic and sextic are also relatively well studied from this point of view
(see \cite{Voisin1988}, \cite{Prz-Ch-Shr:DS}, \cite{Cheltsov2010a}, and the references therein).
The rationality of Fano threefolds with \emph{canonical} Gorenstein singularities was studied in \cite{Prokhorov-2004c}.

The main result of this paper is the following.
\begin{theorem}\label{thm:main}
Let $X$ be a Fano threefold with \textup(at worst\textup) terminal Gorenstein singularities and $\Pic(X)\simeq\ZZ$.
Suppose that $X$ is not rational.

If the group $\Pic(X)$ is not generated by the canonical class $K_X$ \textup(that is, $\iota(X)\ge2$\textup), then
one of the following possibilities occurs:
\begin{enumerate}
\renewcommand\labelenumi {\rm (\alph {enumi})}
\renewcommand\theenumi {\rm (\alph {enumi})}
\item
$X=X_3\subset\PP^4$ is a smooth cubic hypersurface, any variety of this type is not rational \cite{Clemens-Griffiths};
\item
$X=X_2\to\PP^3$ is a double space of index $2$, a general \textup(in particular, any smooth\textup) variety of this type is not rational
\cite{Beauville1977}, \cite{Tjurin1979}, \cite{Voisin1988}, \cite{Prz-Ch-Shr:DS};

\item
$X=X_1\to W$ is a Veronese double cone, a general \textup(in particular, any smooth\textup) variety of this type is not rational
\cite{Tjurin1979}, \cite{Grinenko2004}.
\end{enumerate}
Let $\Pic(X)=\ZZ\cdot K_X$ and let $g=\g(X)$ be the genus of $X$ \textup(see \eqref{eq:def-g}\textup).
Suppose that $g=\g(X)\ge 5$. Then the following assertions hold.
\begin{enumerate}
\item
The anticanonical divisor $-K_X$ is very ample and defines an embedding $X=X_{2g-2}\subset\PP^{g+1}$.
\item
If the variety is $X$
trigonal \textup(i.e. its anticanonical model $X=X_{2g-2}\subset\PP^{g+1}$ is not an intersection of quadrics\textup), then $X$ belongs to one of the two families described in Examples~\xref{example:trigonal:g=5} and~\xref{example:trigonal:g=6}.
A general variety of these families is not rational \textup(and the singular locus of $X$ consists of one ordinary double point\textup).
\item
If $X$ is non-trigonal and the anticanonical model $X=X_{2g-2}\subset\PP^{g+1}$ contains a plane, then $\g(X)=6$
and the variety $X$ is birationally equivalent to a smooth three-dimensional cubic $V_3\subset\PP^4$ under the map 
$V_3\dashrightarrow X_{2g-2}\subset\PP^{g+1}$ geven by the linear system of quadrics passing through a connected reduced curve $\Gamma\subset V_3\subset\PP^4$ of degree $3$
and arithmetic genus $0$ \textup(see Theorem~\xref{th:plane}\textup). Any variety of this type is not rational.
\item
If $X$ is not trigonal and the anticanonical model $X=X_{2g-2}\subset\PP^{g+1}$ does not contain planes,
then one can apply to $X$ an inductive birational construction \eqref{eq:constr} which increases the genus.
\item
If $X$ is locally $\QQ$-factorial, then either $\g(X)\le 6$ or $\g(X)=8 $ and $X$ is smooth.
\end{enumerate}
\end{theorem}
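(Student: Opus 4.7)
The plan is to treat the theorem as an assembly of several largely independent case analyses, organised first by the Fano index $\iota(X)$ and then by the geometry of the anticanonical image. I would first dispose of the case $\iota(X)\ge 2$ by invoking the classification of Fano threefolds of Picard rank one with terminal Gorenstein singularities and higher index. Among the resulting list ($\PP^3$, the quadric $Q$, the del Pezzo threefolds $V_d$ for $1\le d\le 5$ and the double space of index $2$), the families $\PP^3$, $Q$, $V_4$ and $V_5$ are rational by standard constructions (projection from a line, conic or point), even in the singular regime; so only the smooth cubic, the double cover of $\PP^3$ branched in a quartic, and the Veronese double cone remain, and their non-rationality is exactly what is cited in (a)--(c).

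For $\iota(X)=1$ and $g\ge 5$ the gateway step is (i): very ampleness of $-K_X$. I would establish this by combining Kawamata--Viehweg vanishing on Gorenstein terminal threefolds with the Riemann--Roch identities $\dim|{-}K_X|=g+1$ and $(-K_X)^3=2g-2$, together with base-point-freeness results of Reid--Shin type adapted to this singularity class. This gives the anticanonical model $X_{2g-2}\subset\PP^{g+1}$, on which one can run the Enriques--Petri trichotomy: either $X$ is trigonal, or it is cut out by quadrics and contains a plane, or it is cut out by quadrics and contains no plane.

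In the trigonal case (ii), $X$ lies on a three-dimensional rational normal scroll swept out by its trisecant lines, and a Gorenstein terminal analysis of how $X$ embeds into that scroll restricts $g$ and $\Sing(X)$ to the two possibilities listed in Examples~\xref{example:trigonal:g=5} and~\xref{example:trigonal:g=6}. In the plane case (iii), the pencils of hyperplanes through the plane yield a birational modification which, run in reverse, realises $X_{2g-2}\subset\PP^{g+1}$ as the image of a smooth cubic $V_3\subset\PP^4$ under the linear system of quadrics through a connected reduced curve $\Gamma\subset V_3$ of degree $3$ and arithmetic genus $0$; this forces $g=6$ and transports non-rationality from the cubic. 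Theorem~\xref{th:plane} would be invoked here as the technical input.

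Finally, for (iv) and (v) the core tool is the inductive birational construction~\eqref{eq:constr}: starting from a non-trigonal plane-free $X=X_{2g-2}$, one selects a suitable line or conic $\ell\subset X$ and performs the double projection from $\ell$, producing a new Fano threefold $X'$ with terminal Gorenstein singularities, $\iota(X')=1$ and $\g(X')>\g(X)$, so that non-rationality of $X$ is equivalent to that of $X'$. Iterating this step and matching the output against the known list of locally $\QQ$-factorial Gorenstein terminal Fano threefolds of Picard rank one then yields the bound $\g(X)\le 6$, except for the smooth case $\g(X)=8$. The main obstacle I expect is precisely the control of singularities and $\QQ$-factoriality under the double projection: one must check both that the target of~\eqref{eq:constr} stays in the correct class (terminal Gorenstein Fano, index $1$, locally $\QQ$-factorial when $X$ is) and that the genus strictly increases at each step, since these bookkeeping properties are what simultaneously power the induction in (iv) and close the bound in (v).
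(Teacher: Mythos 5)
Your top-level decomposition coincides with the paper's: (a)--(c) rest on the classification of higher-index del Pezzo threefolds with terminal Gorenstein singularities (the paper quotes \cite{Prokhorov-GFano-1}), (i) is Theorems~\ref{theorem:Bs:hyp:trig-0} and~\ref{theorem:Bs:hyp:trig-1}, (ii) is Theorem~\ref{theorem:Bs:hyp:trig-2}, and (iii) is Theorem~\ref{th:plane}. The genuine gap is your mechanism for (iv): the construction \eqref{eq:constr} is \emph{not} a double projection from a line or conic chosen on $X$, and a projection from a center on $X$ \emph{lowers} the genus rather than raising it --- in Example~\ref{example:constr} the projection goes $X^{(1)}\dashrightarrow X^{(0)}$, from the higher-genus variety down to $X$. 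What the paper actually does is take a small $\QQ$-factorialization $\tau\colon\hat X\to X$ (which exists by \cite[Cor.~4.5]{Kawamata-1988-crep} and requires no lines or conics on $X$ at all) and then run the MMP: each $K$-negative \emph{divisorial} Mori contraction drops $\rk\Cl$ by one and raises $(-K)^3$ by at least $2$ (Lemma~\ref{lemma:constr:K}, where the gain of $2$ uses evenness of the anticanonical degree). The ``no planes'' hypothesis enters precisely to keep each anticanonical model terminal \emph{Gorenstein}: it rules out the type \type{B_5} contraction, where $E\simeq\PP^2$ with $\OOO_E(-K_{\hat X})=\OOO_E(1)$ would create a non-Gorenstein point, and also the $\FF_1$-surface case of \cite[Cor.~4.7]{Prokhorov-2005a} obstructing nefness of $-K$ after contraction. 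A double projection from a line on a possibly singular $X$ would need existence and good position of lines and would in any case move the genus in the wrong direction, so the induction in (iv) cannot run as you describe.

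For (v) you propose iterating the construction and matching the output against a list of locally $\QQ$-factorial Fano threefolds, but $\QQ$-factoriality is destroyed after the first divisorial contraction (the anticanonical models $X^{(i)}$ are in general not $\QQ$-factorial), so that matching is unavailable. The paper instead applies Theorem~\ref{th:factorial} directly to $X$: since $\uprho(X)=1$, the hypothesis on absence of contractions of types \type{B_1}--\type{B_4} is vacuous, and local $\QQ$-factoriality upgrades to factoriality by Corollary~\ref{cor:Weil-Cartier}. The genuinely new ingredient there, missing from your sketch, is the exclusion of a \emph{singular} variety with $g=8$: blowing up a line disjoint from $\Sing(X)$ yields a Sarkisov link to a conic bundle over $\PP^2$ whose discriminant quintic acquires a point of multiplicity $\ge 3$ under the singularity, and Iskovskikh's rationality criterion on a standard model then forces $X$ to be rational, a contradiction; the cases $g=7$ and $g\ge 9$ are quoted from \cite{Prokhorov-planes}. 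Finally, in (a) you should record explicitly that a cubic with a terminal (hence multiplicity-$2$) singular point is rational by projection from that point --- this is why smoothness appears in case (a) of the theorem.
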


Recall that the genus of Fano threefolds satisfying the conditions $\uprho(X)=\iota(X)=1$ does not exceed 12 and is not equal to 11.
Therefore, \eqref{eq:constr} can give very explicit results.
A detailed analysis of the construction \eqref{eq:constr} will be given in the second part of the paper.

The author would like to thank the referee for careful reading the manuscript and constructive criticism. 

\section {Preliminaries} 
\subsection{Set-up.} 
\begin{itemize} 
\item 
By $\FF [d_1,\dots, d_n]$ we denote a rational scroll, the projectivization on $\PP^1$ of the vector bundle $\oplus\OOO_{\PP^1}(d_i)$. Usually, $M$ denotes the tautological divisor and $F$ denotes the class of a fiber of the projection to $\PP^1$. 
\item 
For a subvariety $Z\subset\PP^N $, $\langle Z\rangle$ denotes its linear span. 
\item 
By $\uprho(X)$ we denote the rank of the Picard group. 
\item 
For a birational map $f:V\dashrightarrow W$ and a linear system $\LLL$ of Weil divisors on $W$ its (birational) transform on $V$ is usually denoted by $\LLL_V$.
\end{itemize}
\subsection{Singularities}
Standard notions and facts from the   minimal model theory are used (see, e.g., \cite{Kollar-Mori-1988}).
\begin{stheorem}[{\cite[Th.~1.1]{Reid1983}}]
A three-dimensional Gorenstein terminal singularity $X\ni P$ is isolated of type \type{cDV}, i.e. its general hyperplane section $H\ni P$ is a
Du Val singularity. In particular, $X\ni P$ is analytically isomorphic to a hypersurface singularity of multiplicity $2$.
\end{stheorem}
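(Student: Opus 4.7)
The plan is to establish the three assertions --- that $X\ni P$ is isolated, that a general hyperplane section $H$ through $P$ is Du Val, and that $X$ is analytically a hypersurface of multiplicity two --- in that order. The isolation is essentially immediate: the only terminal surface singularity is the smooth point, so terminal singularities are smooth in codimension two; in dimension three this forces the singular locus to have codimension at least three, hence to be a finite set of closed points.

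For the Du Val assertion, I would take an analytic embedding of a neighbourhood of $P$ in $X$ into affine space, and cut by a general linear form $h\in\mathfrak{m}_{X,P}$. Since $X\setminus\{P\}$ is smooth, Bertini gives $\Sing H=\{P\}$; and because $H$ is Cartier in the Gorenstein variety $X$, $H$ is itself Gorenstein with $K_H=(K_X+H)|_H$ by adjunction. Next I would invoke Elkik's theorem (terminal implies rational) together with the general fact that rational singularities pass to a general hyperplane section: given a resolution $f\colon Y\to X$ with $R^if_*\OOO_Y=0$, the strict transform $\tilde H$ is smooth by Bertini and $g:=f|_{\tilde H}\colon\tilde H\to H$ is a resolution, and the vanishing $R^ig_*\OOO_{\tilde H}=0$ is obtained by applying $f_*$ to the short exact sequence $0\to\OOO_Y(-\tilde H)\to\OOO_Y\to\OOO_{\tilde H}\to 0$. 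Hence $H$ is a Gorenstein surface germ with an isolated rational singularity, and Artin's classical criterion then identifies it as Du Val. A more hands-on alternative would resolve $X$ with $K_Y=f^*K_X+\sum a_iE_i$, integers $a_i\ge 1$, expand $f^*H=\tilde H+\sum m_iE_i$ for general $H$, and conclude from the adjunction identity $K_{\tilde H}=g^*K_H+\sum(a_i-m_i)F_i$ (where $F_i=E_i\cap\tilde H$) that $H$ is canonical, provided $a_i\ge m_i$ for all $i$.

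Finally, a Du Val surface singularity embeds analytically as a hypersurface of multiplicity two in $(\mathbb{A}^3,0)$, so $\dim_k\mathfrak{m}_{H,P}/\mathfrak{m}_{H,P}^2=3$. For general $h$ one has $h\notin\mathfrak{m}_{X,P}^2$, so cutting by $h$ drops the embedding dimension by one, giving $\dim_k\mathfrak{m}_{X,P}/\mathfrak{m}_{X,P}^2=4$. Hence $X\ni P$ embeds analytically as a hypersurface in $(\mathbb{A}^4,0)$, and its multiplicity at $P$ coincides with that of $H$, namely two. The main obstacle in the whole argument is the inheritance of rationality by the generic hyperplane section used in the Du Val step: it depends on Elkik's nontrivial theorem that terminal singularities are rational together with a cohomological Bertini-type lemma, and the alternative discrepancy route would in any case require the technical bound $a_i\ge m_i$ relating discrepancies to multiplicities of a general section through $P$.
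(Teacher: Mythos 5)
Your opening step (isolatedness) and your closing step (embedding dimension and multiplicity $2$, once the Du Val statement is known) are fine, but the central step contains a genuine gap, located exactly where you placed the weight of the argument: the claim that rational singularities are inherited by a general hyperplane section \emph{through the fixed point} $P$ is false. Elkik-type inheritance holds for general members of a \emph{base-point-free} linear system, whereas the system $|\mathfrak{m}_P\otimes\OOO_X(1)|$ you are cutting with has its base point at $P$ --- precisely where the conclusion is needed. Concretely, let $X\subset\mathbb{A}^4$ be the cone over a smooth cubic surface $S\subset\PP^3$, i.e. $\{f_3=0\}$ for a general cubic form $f_3$. This germ is Gorenstein, isolated, and has rational (indeed canonical, crepant blow-up of the vertex) singularities; yet the general hyperplane section through the vertex is the cone over a smooth plane cubic, a simple elliptic singularity of multiplicity $3$, neither rational nor Du Val. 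Your cohomological argument breaks at the connecting map: writing $f^*H=\tilde H+\sum m_iE_i$, one has $\OOO_Y(-\tilde H)\simeq f^*\OOO_X(-H)\otimes\OOO_Y(\sum m_iE_i)$, and the effective exceptional twist destroys the vanishing of $R^{i+1}f_*\OOO_Y(-\tilde H)$, so $R^if_*\OOO_Y=0$ does not propagate to $R^ig_*\OOO_{\tilde H}=0$ (in the cone example $R^1g_*\OOO_{\tilde H}\neq0$). Worse, your argument uses terminality only through isolatedness; since for Gorenstein germs ``rational'' is equivalent to ``canonical'' (rationality forces $f_*\omega_Y=\omega_X$, hence all discrepancies $\ge 0$), your chain of reasoning would prove that every isolated canonical Gorenstein threefold point is cDV --- which the cone example refutes. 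So the proposal proves a false statement and must fail at the inheritance lemma.

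The ``hands-on alternative'' you sketch has the right shape, but the bound $a_i\ge m_i$ you defer is not a technical verification: it is the actual content of Reid's theorem and cannot follow from terminality by bookkeeping alone, since the analogous inequality genuinely fails for the canonical cone above while every formal ingredient you list ($a_i\ge1$, adjunction, generic smoothness of $\tilde H$ after passing to a log resolution of $\mathfrak{m}_P$) is still available there. Reid's proof runs the implication in the contrapositive direction: assuming the general section $H$ is normal but \emph{not} Du Val, it is an elliptic Gorenstein surface singularity, and the structure theory of such singularities (fundamental cycles, in the spirit of Laufer's and Reid's analysis) is used to manufacture a divisor over $P$ with discrepancy $\le 0$, contradicting terminality. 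Note also that the paper offers no proof to compare against --- the statement is quoted from Reid's Theorem~1.1 --- and your outline, as it stands, does not reconstruct that argument: everything before the deferred inequality is soft, and everything hard is hidden inside it.
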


\begin{scorollary}[{\cite[Lemma 5.1]{Kawamata-1988-crep}}]
\label{cor:Weil-Cartier}
Let $X$ be a three-dimensional variety with terminal Gorenstein singularities and let $D$ be an integral Weil divisor on $X$ such that $nD$ is a Cartier divisor for some integer $n$.
Then $D$ is a Cartier divisor.
\end{scorollary}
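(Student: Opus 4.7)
The plan is to reduce the statement to a local analytic problem at each singular point and then to invoke the topology of the link of a three-dimensional isolated hypersurface singularity. Since at a smooth point every Weil divisor is automatically Cartier, the question is purely local at each $P\in\Sing X$. By the theorem of Reid just recalled, $(X,P)$ is analytically an isolated hypersurface singularity in $(\CC^{4},0)$. Because $X$ is normal and $\{P\}$ has codimension $\ge 2$, the analytic local class group $\Cl(X,P)$ is canonically identified with $\Pic(X^{*})$, where $X^{*}$ is a punctured analytic neighbourhood of $P$; the assumption that $nD$ is Cartier then says precisely that the class of $D$ in $\Pic(X^{*})$ is $n$-torsion, so the task reduces to proving that $\Pic(X^{*})$ is torsion-free.

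The key geometric input I would use is the topology of the link $L$ of the singularity: by Milnor's theorem, the Milnor fibre of a three-dimensional isolated hypersurface singularity is $2$-connected, from which one deduces that the closed oriented $5$-manifold $L$ is simply connected. Since $X^{*}$ deformation retracts onto $L$, this yields $H^{1}(X^{*},\ZZ)=0$ and, by the universal coefficient theorem applied to the simply connected space $L$, $H^{2}(X^{*},\ZZ)\cong H^{2}(L,\ZZ)$ is torsion-free.

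These two facts combine through the exponential sequence on $X^{*}$, the relevant segment of which is
\[
H^{1}(X^{*},\OOO_{X^{*}})\longrightarrow \Pic(X^{*})\longrightarrow H^{2}(X^{*},\ZZ).
\]
The target is torsion-free by the previous step; since $H^{1}(X^{*},\ZZ)=0$, the kernel of the right-hand arrow coincides with the $\CC$-vector space $H^{1}(X^{*},\OOO_{X^{*}})$ and is in particular torsion-free as an abelian group. Thus $\Pic(X^{*})$ is an extension of a torsion-free group by a torsion-free group, hence itself torsion-free; the class of $D$ must therefore vanish and $D$ is Cartier at $P$.

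The only nontrivial ingredient is the topological one: the simply-connectedness of the link of a three-dimensional isolated hypersurface singularity, which I would simply cite from Milnor. Everything else is formal manipulation of the exponential sequence together with the identification of Weil divisors on a normal variety with reflexive rank-one sheaves. A more algebraic alternative would be to pass to a small partial resolution or to a $\QQ$-factorial terminalization of $(X,P)$ and argue there, but that would require a case analysis of the possible \type{cDV} singularities, which the topological route avoids.
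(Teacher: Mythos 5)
Your proof is correct, and it is essentially the argument behind the paper's own treatment: the paper gives no proof here but simply quotes Kawamata's Lemma 5.1, whose proof runs along exactly your lines (reduce to an isolated \type{cDV} hypersurface point via Reid's theorem, invoke Milnor's connectivity of the link to get $H^1(X^*,\ZZ)=0$ and $H^2(X^*,\ZZ)$ torsion-free, and conclude torsion-freeness of $\Pic(X^*)$ from the exponential sequence). The only step you leave implicit --- that analytic local triviality of the reflexive sheaf $\OOO_X(D)$ near $P$ implies $D$ is Cartier algebraically, via faithful flatness of $\OOO_{X,P}\to\OOO_{X,P}^{\mathrm{an}}$ --- is standard and does not affect the argument.
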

\subsection{Fano varieties}
Let $X$ be a Fano variety. By $\iota(X)$ we denote its \emph{index}, that is, the largest integer such that for some Cartier divisor $H_X$ the equality $-K_X=\iota H_X$ holds.
Further, the number $\dd(X):=H_X^{\dim X}$ is called the \emph{degree} of a Fano variety $X$.
\begin{sproposition-definition}[{\cite[Cor.~2.1.14]{IP99}}]
\label{prop:g}
Let $X$ be a Fano threefold with canonical Gorenstein singularities.
Denote
\begin{equation}
\label{eq:def-g}
\g(X):=\frac12 (-K_X)^3+1.
\end{equation}
Then $\g(X)$ is an integer, $\g(X)\ge2$, and
\[
\dim|-K_X|=\g(X)+1.
\]
\end{sproposition-definition}
The number $\g(X)$ is called the \emph{genus} of $X$.

\emph{A weak Fano variety} is a (projective) variety whose anticanonical divisor is nef and big.
Three-dimensional Fano varieties of index $2$ are called \emph{del Pezzo threefolds}.

\subsection{Extremal contractions}
\begin{stheorem}[\cite{Mori-1982}, \cite{Cutkosky-1988}]
\label{classification:extremal-rays}
Let $X$ be a three-dimensional projective variety with terminal factorial singularities and let $R$ be an extremal
ray on $X$.
Let $\ell $ be the minimal rational curve on $X$ generating $R$ and let
$f:X\to Y$ be the contraction of the ray $R$.

\par\medskip\noindent
If $R$ is not nef, then $f$ is birational and contracts 
an irreducible exceptional divisor $E$ so that $f$ is the blowup of 
$f(E)$ \textup(with reduced structure\textup). One of the following holds:
\par\medskip\noindent
\begin{longtable} {c|p {0.75\textwidth}|c}
{\rm type} &\multicolumn {1} {c|} {description} & $-K_X\cdot l $
\\\hline
\type{B_1} &
$f(E)$ is a curve with planar singularities, $\ell$ is a fiber of a ruled surface $E$,
$Y$ is smooth along $f(E)$ & $1$
\\\hline
\type{B_2} &
$f(E)$ is a point, $X$ is smooth along $E$, $Y$ is smooth at $f(E)$, $\ell $ is a line on $E\simeq\PP^2$ & $2$
\\\hline
\type{B_3} &
$f(E)$ is an ordinary double point, $X$ is smooth along $E$, $E\simeq\PP^1\times\PP^1$,
$\OOO_E (E)=\OOO_{\PP^1\times\PP^1}(-1,-1)$,
$\ell $ is a generator of
$\PP^1\times\PP^1$ & $1$
\\\hline
\type{B_4} &
$f(E)$ is a double point, $E$ is a quadratic cone in $\PP^3$,
$\OOO_E (E)=\OOO_{E}(-1)$, $\ell $ is a generator of the cone & $1$
\\\hline
\type{B_5} &
$f(E)$ is a non-Gorenstein point of multiplicity $4$, $X$ is smooth along $E$,
$\OOO_E (E)=\OOO_{\PP^2}(-2)$,
$\ell $ is a line on $E\simeq\PP^2$ & $1$
\end{longtable}

\par\medskip\noindent
If $R$ is nef, then $f:X\to Y$ is a
Fano-Mori fibration, $Y$ is smooth, and one of the following holds:
\par\medskip\noindent
\begin{longtable} {c|c|p {0.65\textwidth}|c}
{\rm type} & $\dim Y$ & $f:X\to Y$ & $-K_X\cdot l $
\\\hline
\type{C_1} &\multirow2 {*} {$2$} & conic bundle
with a nontrivial discriminant curve & $1$
\\\cline {1-1}\cline {3-4}
\type{C_2} & & $\PP^1$-bundle & $2$
\\\hline
\type{D_1} &\multirow3 {*} {$1$} & del Pezzo fibration of degree $\le 6$ & $1$
\\\cline {1-1}\cline {3-4}
\type{D_2} && fibration into quadrics & $2$
\\\cline {1-1}\cline {3-4}
\type{D_3} && $\PP^2$-bundle & $3$
\\\hline
\type{F} & $0$ & $X$ is a Fano threefold with $\Pic(X)\simeq\ZZ$ & $\iota(X)$
\end{longtable}
\end{stheorem}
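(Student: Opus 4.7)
The plan is to follow the Mori--Cutkosky strategy, analyzing each extremal ray $R$ via its length and the geometry of the associated contraction. By the Cone and Contraction Theorems applied to a terminal Gorenstein factorial threefold, $R$ is generated by a rational curve $\ell$ of small anticanonical degree, and the contraction $f\colon X\to Y$ exists. I would split the discussion according to whether $R$ is nef or not, and in each case combine the length estimate $-K_X\cdot\ell\le\dim F+1$ (where $F$ is a general positive-dimensional fiber) with a local analysis of $f$.

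For the non-nef (birational) case, the first step is to rule out small contractions: on a factorial Gorenstein terminal threefold a $K_X$-negative small extremal contraction would, via the flip, produce singularities incompatible with Gorenstein terminality on the other side, so $f$ must be divisorial. Let $E$ be the exceptional prime divisor. The length inequality forces $-K_X\cdot\ell\in\{1,2\}$, and adjunction $-K_E\equiv (-K_X-E)|_E$, combined with the fact that $E$ is swept out by deformations of $\ell$, places sharp constraints on the pair $(E,\OOO_E(E))$. If $f(E)$ is a curve, then $E$ is a ruled surface over $f(E)$ and one obtains type \type{B_1}; if $f(E)$ is a point, then after passing to reduced structure $E$ is either $\PP^2$, a smooth quadric, or a quadric cone, each with a prescribed normal bundle. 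Reid's theorem identifies the singularity of $Y$ at $f(E)$ with a compound Du Val point, and a careful formal-neighborhood analysis then matches each admissible $(E,\OOO_E(E))$ with its image singularity, producing types \type{B_2}--\type{B_5}.

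For the nef case, the base-point-free theorem produces a morphism $f\colon X\to Y$ onto a normal $Y$ with connected fibers, and the general fiber $F$ is Fano by adjunction. If $\dim Y=2$, then $F$ is one-dimensional Fano, so $F\simeq\PP^1$ or a conic, giving \type{C_1} and \type{C_2}; if $\dim Y=1$, then $F$ is a del Pezzo surface, and the cases \type{D_1}--\type{D_3} are distinguished by whether $\dd(F)\le 6$, $F$ is a quadric, or $F=\PP^2$, which correlates with $-K_X\cdot\ell\in\{1,2,3\}$; if $\dim Y=0$, then $X$ itself is a Fano threefold with $\Pic(X)\simeq\ZZ$, giving \type{F}. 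Smoothness of $Y$ in the fibration cases follows from Cutkosky's extension of Mori's arguments, using that $X$ is factorial and terminal and that relative $\uprho$ is one.

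The main obstacle is the detailed local analysis in the divisorial case: ruling out exotic normal-bundle configurations on $E$ and pinning down the exact singularity of $Y$---an ordinary double point for \type{B_3}, a quadric cone point for \type{B_4}, and a non-Gorenstein quotient of multiplicity $4$ for \type{B_5}---requires delicate formal computations with hyperplane sections and the Reid--Mori classification of terminal singularities. This local work, together with the corresponding structural statements on $Y$ in the fibration cases, is precisely the content that Cutkosky supplies on top of Mori's original smooth-case analysis, and it is where the bulk of the proof would reside.
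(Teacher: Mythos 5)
This statement is quoted in the paper from Mori \cite{Mori-1982} and Cutkosky \cite{Cutkosky-1988} without any internal proof, so there is nothing in the paper to compare with line by line; your outline follows the strategy of those references (length estimates, adjunction on the exceptional divisor, classification of the pairs $(E,\OOO_E(E))$ in the divisorial case, and the fiber-type analysis in the nef case), and your closing paragraph correctly locates the real work in Cutkosky's local analysis. As a proof, however, your text is an outline keyed to the right references rather than an argument: the identification of $f$ with the blowup of $f(E)$ with reduced structure, the smoothness of $Y$ along $f(E)$, and the multiplicity-$4$ non-Gorenstein point in type \type{B_5} are all deferred, which is acceptable only because the paper itself treats the theorem as a citation.

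One step of your plan is genuinely wrong as stated: the exclusion of small contractions. You argue that a $K_X$-negative small extremal contraction ``would, via the flip, produce singularities incompatible with Gorenstein terminality on the other side.'' This fails: three-dimensional terminal flips exist and the flipped variety is again terminal, and nothing about its singularities contradicts the terminality, Gorenstein property, or factoriality of $X$; moreover, invoking the flip theorem (1988, and much deeper than the statement at hand) gives no contradiction at all. The standard and correct argument is deformation-theoretic and belongs to Mori's original paper: since $K_X$ is Cartier, $-K_X\cdot\ell\ge 1$ for the contracted curve $\ell$, so $\dim_{[\ell]}\operatorname{Hom}(\PP^1,X)\ge -K_X\cdot\ell+3\ge 4$, and deformations of $\ell$, which remain in fibers of $f$, sweep out a subset of dimension at least $2$; hence the exceptional locus contains a divisor and $f$ is divisorial. (One can alternatively quote that a flipping curve on a terminal threefold satisfies $0<-K_X\cdot C<1$, impossible for $K_X$ Cartier, but that fact rests on the classification of extremal neighborhoods and is far heavier than needed.) A smaller slip: Reid's cDV theorem describes the singularities of $X$ (and of $Y$ in types \type{B_3}--\type{B_4}), but the image point in type \type{B_5} is non-Gorenstein, hence not compound Du Val, so it cannot be handled by that theorem as your sketch suggests.
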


\begin{scorollary}\label{cor:rat}
Let $X$ be a three-dimensional projective rationally connected variety with terminal factorial singularities and let $f:X\to Y$ be an 
extremal Mori contraction. If $f$ is of type \type{C_2}, \type{D_2} or \type{D_3}, then $X$ is rational.
If $f$ is of the type \type{D_1} and \textup(anticanonical\textup) degree of the generic fiber $\ge 5$, then $X$ is rational.
\end{scorollary}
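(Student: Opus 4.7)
The strategy is to reduce the rationality of $X$ to the rationality of the base $Y$ together with $K$-rationality of the generic fiber, where $K = k(Y)$. Indeed, if $Y$ is rational and the geometric generic fiber of $f$ is $K$-rational, then $X$ is birational to $\PP^n \times Y$ with $n = \dim X - \dim Y$, and hence rational.

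First I would verify that $Y$ is rational. For types \type{D_1}, \type{D_2}, \type{D_3} this is tautological, since $Y \simeq \PP^1$. For type \type{C_2} the classification theorem above guarantees that $Y$ is a smooth projective surface; as the image of a rationally connected variety under a surjective morphism is rationally connected, $Y$ is rationally connected, and by Castelnuovo's criterion a smooth rationally connected surface is rational.

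Rationality of the generic fiber is immediate in types \type{C_2} (where the generic fiber is $\PP^1_K$) and \type{D_3} (where it is $\PP^2_K$). For type \type{D_2}, the generic fiber is a two-dimensional quadric in $\PP^3_K$; since $K = k(\PP^1)$ is a $C_1$-field, Tsen's theorem guarantees the existence of a $K$-rational point on this quadric, and stereographic projection from a smooth such point then exhibits the fiber as $K$-rational.

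The one nontrivial case is type \type{D_1} with generic fiber a del Pezzo surface $F$ of degree $d \ge 5$ over $K = k(\PP^1)$, and this is where I expect the real content of the corollary to lie. I would invoke the classical rationality theorems for del Pezzo surfaces over arbitrary fields: the Enriques theorem gives $K$-rationality in degree $5$ unconditionally, while for $d \in \{6,7,8,9\}$ the standard Brauer-group obstructions to $K$-rationality vanish because $\mathrm{Br}(K) = 0$ (Tsen again) and the Picard lattice behaves favorably over $K$. Everything else in the proof is a routine combination of Tsen's theorem, Castelnuovo's criterion and base-change; the main obstacle is having the degree-$d$ del Pezzo rationality input reliably available in all of the cases $d \ge 5$.
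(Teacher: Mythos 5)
Your proposal is correct and follows essentially the same route as the paper, whose entire proof is a two-sentence reduction to the rationality of the generic fiber (a del Pezzo surface over the function field of the base) with a citation of \cite[Ch.~IV]{Manin-Cubic-forms-e-I}; your Tsen/Castelnuovo details for types \type{C_2} and \type{D_2} and the degree~$\ge 5$ del Pezzo input are exactly the content that citation supplies. One cosmetic remark: you should say the \emph{generic} fiber is $K$-rational rather than the \emph{geometric generic} fiber, since the latter lives over $\overline{K}$ and its rationality is weaker than what your base-change argument needs.
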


\begin{proof}
In the case of $\dim Y=1$, the question reduces to the question of the rationality of the generic scheme fiber (which is a del Pezzo surface over the functional field).
This problem is studied well (see, e.g., \cite[Ch.~IV]{Manin-Cubic-forms-e-I}).
\end{proof}

\begin{scorollary}\label{cor:cb-fano}
Let $X$ be a weak Fano threefold with terminal factorial singularities such that the map to its anticanonical model contracts at most a finite number of curves and let $\pi:X\to Y$ be an extremal
contraction to a surface. Then $Y$ is a smooth del Pezzo surface.
\end{scorollary}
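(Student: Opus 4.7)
The plan is to invoke the classification of extremal contractions for smoothness of $Y$, and then to establish ampleness of $-K_Y$ via Nakai--Moishezon, using crucially the hypothesis that $X\to\bar X$ contracts only finitely many curves.

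By Theorem~\xref{classification:extremal-rays}, the contraction $\pi\colon X\to Y$ is of type \type{C_1} or \type{C_2}, and in either case $Y$ is smooth. The first step is the pushforward identity
\[
\pi_*\bigl((-K_X)^2\bigr)\;=\;-4K_Y\quad\text{in }\Pic(Y)\otimes\QQ.
\]
I would derive this by choosing a general smooth curve $C\subset Y$ avoiding the discriminant, so that $S:=\pi^{-1}(C)$ is a smooth $\PP^1$-bundle over $C$; then $(-K_S)^2=8(1-g(C))$ and $-K_S=(-K_X-\pi^*C)|_S$ by adjunction, while $(\pi^*C)^2=C^2\cdot[\text{fibre}]$ and $(-K_X)\cdot[\text{fibre}]=2$. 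Expanding $(-K_S)^2$ by the projection formula and comparing to $8(1-g(C))=-4K_Y\cdot C-4C^2$ (via adjunction on $Y$) gives the identity.

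Nefness of $-K_Y$ is then immediate: for any irreducible curve $C\subset Y$,
\[
-4K_Y\cdot C\;=\;(-K_X)^2\cdot\pi^*C\;=\;\bigl((-K_X)|_{\pi^*C}\bigr)^2\;\geq\;0,
\]
since the restriction of a nef class is nef. To upgrade nefness to ampleness I would argue by Nakai--Moishezon via contradiction. First, if some irreducible $C\subset Y$ has $-K_Y\cdot C=0$, then the nef class $(-K_X)|_S$ on $S=\pi^{-1}(C)$ has self-intersection zero. Passing to a resolution of $S$ and invoking standard surface theory for a nef divisor of square zero (which is either numerically trivial or has numerical dimension one and hence determines a one-parameter family of curves on which it vanishes), one produces infinitely many irreducible curves in $S$ on which $-K_X$ has degree zero. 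All such curves are contracted by the anticanonical morphism $X\to\bar X$, contradicting the finiteness hypothesis.

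If instead $-K_Y\cdot C>0$ for every irreducible curve but $(-K_Y)^2=0$, I would use that $X$ is rationally connected (its anticanonical model $\bar X$ is Fano with canonical singularities, hence rationally connected, and $X\to\bar X$ is birational) and that $\pi$ has rational fibres, so $Y$ is a smooth rational surface. Then Riemann--Roch gives $\chi(-K_Y)=1+K_Y^2=1$ and $h^2(Y,-K_Y)=h^0(Y,2K_Y)=0$ (as $Y$ is rational), whence $h^0(-K_Y)\geq1$ and $-K_Y$ is linearly equivalent to an effective divisor $\sum a_iC_i$ with $a_i>0$; but then $0=(-K_Y)^2=\sum a_i\,(-K_Y\cdot C_i)$ forces $-K_Y\cdot C_i=0$ for some $i$, reducing to the previous case. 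The delicate step in this plan is the first case: when $C$ meets the conic-bundle discriminant, $S$ may be singular or non-reduced, and one has to run the surface-theoretic argument on a resolution and descend the infinite family of zero-degree curves back to $X$.
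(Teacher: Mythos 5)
Your opening identity is wrong, and the error is fatal to the whole plan. The correct pushforward formula for a conic bundle, which is what the paper uses (citing \cite[Lemma~3.11]{P:rat-cb:e}), is $\pi_*K_X^2\equiv -4K_Y-\Delta$, where $\Delta$ is the discriminant curve; your claimed $\pi_*\bigl((-K_X)^2\bigr)=-4K_Y$ drops the $\Delta$-term. The derivation fails at its first step: one cannot choose a general curve $C\subset Y$ \emph{avoiding} the discriminant. A curve disjoint from $\Delta$ satisfies $C\cdot\Delta=0$, and such classes never span $N^1(Y)$ when $\Delta\neq\varnothing$ (for instance $Y=\PP^2$ in type \type{C_1}: every curve meets $\Delta$). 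Over a curve meeting $\Delta$ transversally, $S=\pi^{-1}(C)$ is a conic bundle with $\Delta\cdot C$ degenerate fibres and the correct count is $K_S^2=8\bigl(1-g(C)\bigr)-\Delta\cdot C$, which is precisely where the missing term comes from.

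Because of this, your case analysis never confronts the actual crux. With the correct formula, $-4K_Y\cdot C=K_X^2\cdot\pi^*C+\Delta\cdot C$, and the finiteness hypothesis gives $K_X^2\cdot\pi^*C>0$ for \emph{every} curve $C$ (this, via the base-point-free system $|-nK_X|$, is how the paper uses that hypothesis — your ``delicate first case'' with $K_X^2\cdot\pi^*C=0$ never occurs). Hence the only curves that could violate $K_Y\cdot C<0$ have $\Delta\cdot C<0$, i.e.\ are components of the discriminant, and ruling these out is the heart of the proof: the paper shows such a $C$ would be a smooth rational curve with $(\Delta-C)\cdot C\le 1$, so the family of components of the degenerate fibres over $C$ splits, contradicting $\uprho(X/Y)=1$ (Beauville's criterion; see \cite[3.8--3.9]{P:rat-cb:e}). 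Nothing in your proposal substitutes for this step. A secondary flaw: your assertion that a nef class of square zero on a surface ``determines a one-parameter family of curves on which it vanishes'' is false in general — nef divisors of numerical dimension one need not be semiample and may vanish on no curve at all; in your intended application one would instead have to use that $-nK_X$ is base point free, so the anticanonical morphism maps $S$ onto a curve. Your final Riemann--Roch argument excluding $(-K_Y)^2=0$ is correct but becomes unnecessary once the true decomposition $-4K_Y\equiv A+\Delta$ with $A$ ample and $\Delta$ effective is in hand.
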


\begin{proof}[Outline of the proof]
By Theorem~\ref{classification:extremal-rays} the surface $Y$ is smooth.
Next, we follow the proof of Proposition 5.2 (i) of \cite{Prokhorov-2005a}. I is well-known that the following numerical equivalence holds
\[
\pi_* K_{X}^2\equiv -4K_Y-\Delta,
\]
where $\Delta$ is the discriminant curve (see, e.g., \cite[Lemma~3.11]{P:rat-cb:e}). We put $A:=\pi_* K_{X}^2$. Thus,
\[
-4K_Y\equiv A+\Delta.
\]
For any curve $C\subset Y$ one has $A\cdot C>0$. Indeed, the projection formula yields $A\cdot C=K_X^2\cdot \pi^*C$, where 
the last expression is strictly positive because the linear system $|-nK_{X}|$ is base point free and contracts only a finite number of curves.
Since $A$ is numerically equivalent to an effective divisor, it is ample.
To prove the ampleness of $-K_Y$, it suffices to show that $K_Y\cdot C <0$ for any curve $C\subset Y$.
Suppose that $K_Y\cdot C\ge 0$.
Then $\Delta\cdot C <0$ and $(K_Y+\Delta)\cdot C <0$. In particular, this means that $C$ is a component of $\Delta$, $\p (C)=0$, and
$\Delta'\cdot C <2$, where $\Delta':=\Delta-C$. 
Thus $C$ is a smooth rational curve which either is a connected component of the discriminant $\Delta$ or meets the complement 
$\Delta'$ transversally at one point. But then the Hilbert scheme parametrizing components of fibers over points $c\in C$ splits.
This contradicts the condition $\uprho(X/Y)=1$
(see, e.g., \cite[3.8-3.9]{P:rat-cb:e}).
\end{proof}

\subsection{Anticanonical divisor}

\begin{stheorem}[{\cite{Shokurov1979}}, {\cite{Reid-Kaw-1983}}, \cite{Mella-1999}]
\label{thm:ge}
Let $X$ be a Fano threefold with canonical Gorenstein singularities.
Then a general divisor $S\in|-K_X|$ is a normal surface with at worst Du Val singularities.
\end{stheorem}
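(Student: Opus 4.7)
The plan is to prove the statement by reducing to the terminal case via a crepant modification and then applying Reid's \type{cDV} theorem together with Bertini.

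Step 1 (nonemptiness). This is essentially already recorded in Proposition-Definition~\ref{prop:g}: $\dim|-K_X|=\g(X)+1\ge 3$, via Riemann--Roch combined with Kawamata--Viehweg vanishing; the vanishing is applicable because three-dimensional canonical Gorenstein singularities are rational (Elkik). In particular $|-K_X|\ne\emptyset$.

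Step 2 (reduction to the terminal case). At a canonical but non-terminal Gorenstein point a general hyperplane section need not be Du Val, so direct Bertini on $X$ does not suffice. I would instead pass to a crepant terminal $\QQ$-factorial modification $\pi\colon Y\to X$, whose existence is provided by the MMP for canonical Gorenstein threefolds: $K_Y=\pi^*K_X$ and $Y$ has only terminal Gorenstein singularities. Crepancy gives $\pi_*\OOO_Y(-K_Y)=\OOO_X(-K_X)$, so $|-K_Y|$ and $|-K_X|$ correspond and it suffices to analyse a general $\tilde S\in|-K_Y|$ and push down.

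Step 3 (analysis on $Y$). The singularities of $Y$ are isolated and of type \type{cDV} by the theorem of Reid cited above, so a general hyperplane section through each singular point of $Y$ is Du Val. Since $\dim|-K_Y|=\dim|-K_X|\ge 3$, the anticanonical linear system separates tangent directions at each isolated singular point of $Y$, and consequently a general $\tilde S\in|-K_Y|$ can be arranged so that its analytic germ at each such point coincides with that of a general hyperplane section of the ambient \type{cDV} germ, hence is Du Val. Standard Bertini over the smooth locus handles the generic points; the delicate part, which is the main obstacle, is to rule out fixed components and non-normal generic members along possible base curves of $|-K_Y|$. This is precisely the content of the successive refinements of Shokurov (smooth case), Reid--Kawamata (terminal Gorenstein case) and Mella (the general canonical Gorenstein case without $\QQ$-factoriality).

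Step 4 (descent to $X$). The general member $\tilde S\in|-K_Y|$ pushes forward to $S=\pi(\tilde S)\in|-K_X|$; by crepancy the induced birational morphism $\pi|_{\tilde S}\colon\tilde S\to S$ contracts only $(-2)$-curves (the components of the $\pi$-exceptional locus meeting $\tilde S$), and contractions of $(-2)$-configurations on a Du Val surface yield again a Du Val surface. Equivalently, once $S$ is known to be normal, the adjunction formula $K_S=(K_X+S)|_S\sim 0$ together with rationality of the singularities of $S$ (inherited from those of $\tilde S$) forces those singularities to be rational Gorenstein with trivial canonical divisor, i.e.\ Du Val.
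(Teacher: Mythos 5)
The paper itself gives no proof of Theorem~\xref{thm:ge}: it is quoted from the literature (Shokurov for smooth $X$, Reid--Kawamata and Mella for the singular refinements), so there is no internal argument to compare against, and your attempt must stand on its own. It does not, because it is circular at the decisive point. The whole difficulty of the theorem is concentrated along $\Bs|-K_X|$ --- ruling out fixed components, proving normality of the general member, and controlling its singularities at base points --- and at exactly that step you write that this ``is precisely the content of the successive refinements of Shokurov, Reid--Kawamata and Mella,'' i.e.\ you invoke the theorem being proved. The reductions you do carry out are peripheral: Step~2 (crepant terminal modification, with $K_Y=\pi^*K_X$ and $|-K_Y|=\pi^*|-K_X|$) and Step~4 (descent) are legitimate bookkeeping, though Step~4 is itself incomplete as written --- normality of $S=\pi(\tilde S)$ requires $\pi_*\OOO_{\tilde S}=\OOO_S$, which one gets from the sequence $0\to\OOO_Y(K_Y)\to\OOO_Y\to\OOO_{\tilde S}\to 0$ and Grauert--Riemenschneider vanishing $R^1\pi_*\omega_Y=0$, not automatically from birationality; and the contracted curve configurations must be checked to be negative definite of ADE type (true here since $K_{\tilde S}\sim 0$, but it needs saying).

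Step~3 is also flawed in its local analysis. At singular points of $Y$ \emph{outside} the base locus the argument is unnecessary: members of $|-K_Y|$ through a fixed point form a proper linear subsystem, so a general member simply avoids every non-base singular point, and no claim about tangent directions is needed. At genuine base points the argument fails: $\dim|-K_Y|\ge 3$ does not imply that the system separates tangent directions at a point, and even granting that, it would not give that the germ of a general member equals a \emph{general} hyperplane section germ of the \type{cDV} singularity --- indeed it cannot, since in the situation of Theorem~\xref{thm:bs}\xref{thm:bs0} every member passes through the base point $P$, which is singular on both $X$ and $S$, and $S$ acquires there an $A_1$ point. That this forced singularity is nevertheless Du Val is exactly the content of the cited results (Shin, Reid, Mella), not a consequence of Bertini plus Reid's \type{cDV} theorem. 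So after your reductions the full strength of the statement remains unproved.
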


It is clear that a general divisor $S\in|-K_X|$ is a (possibly singular) K3 surface.
From the classical results \cite{Saint-Donat-1974} on linear systems on K3 surfaces one obtains the following.

\begin{stheorem}[{\cite{Shin1989}}]\label{thm:bs}
Let $X$ be a Fano threefold with canonical Gorenstein singularities.
Suppose that the base locus $\Bs|-K_X|$ of the anticanonical linear system $|-K_X|$
is non-empty and let $S\in|-K_X|$ be a general divisor. Then $\iota(X)=1$ and one of the following cases occurs:
\begin{enumerate}
\item\label{thm:bs1}
$\Bs|-K_X|$ \textup(as a scheme\textup) is a smooth rational curve contained in the smooth locus of $X$ and a general element $S\in|-K_X|$ is smooth along $\Bs|-K_X|$;
\item\label{thm:bs0}
$\Bs|-K_X|$ consists of a single point $P$ which is singular for both $X$ and $S$. Moreover, in this case, $P\in S$ is a singularity of type \type{A_1},
and $-K_X^3=2$.
\end{enumerate}
\end{stheorem}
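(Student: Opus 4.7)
My approach is to reduce the statement to the classical theory of linear systems on K3 surfaces. A general divisor $S\in|-K_X|$ is a (possibly singular) K3 surface with at worst Du Val singularities by Theorem~\ref{thm:ge}. The short exact sequence
\[
0\longrightarrow\OOO_X\longrightarrow\OOO_X(-K_X)\longrightarrow\OOO_S(L)\longrightarrow 0,\qquad L:=(-K_X)|_S,
\]
together with $H^1(X,\OOO_X)=0$ (canonical singularities are rational), yields surjectivity of $H^0(X,-K_X)\to H^0(S,L)$, so $|L|$ is cut out on $S$ by $|-K_X|$. Since $S$ itself lies in $|-K_X|$, we have $\Bs|-K_X|\subset S$ and in fact $\Bs|-K_X|=\Bs|L|$ as subschemes. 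The line bundle $L$ is nef and satisfies $L^2=(-K_X)^3>0$, so it is big.

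I would then pass to the minimal resolution $\mu\colon\tilde S\to S$, making $\tilde S$ a smooth K3 surface and $\tilde L:=\mu^*L$ nef and big. Saint-Donat's theorem on K3 linear systems asserts that whenever $|\tilde L|$ has non-empty base locus there is a numerical decomposition $\tilde L\sim aE+R$, with $E$ nef of square zero (fiber class of an elliptic pencil), $R$ a smooth rational $(-2)$-curve, $E\cdot R=1$ and $a\ge 2$; moreover $\Bs|\tilde L|=R$ and $\tilde L^2=2a-2$. The dichotomy between the two alternatives of the theorem comes from whether or not $R$ is contracted by $\mu$.

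\emph{Case 1: $R$ is not $\mu$-exceptional.} Then $\mu$ embeds $R$ as a smooth rational curve in $S$, equal to the scheme-theoretic base locus $\Bs|L|$. Since $R$ is disjoint from the $\mu$-exceptional locus, $S$ is smooth along $\mu(R)$; and if $X$ were singular at some point of $\mu(R)$, Reid's description of cDV singularities would force $S$ to acquire a Du Val singularity there, contradicting this smoothness. This yields case~\ref{thm:bs1}. \emph{Case 2: $R$ is $\mu$-exceptional.} Then $R$ lies over a Du Val point $P\in S$, and since $L$ is Cartier on $S$ we must have $\tilde L\cdot R=0$; combined with $\tilde L\sim aE+R$ this forces $a-2=0$, so $a=2$ and hence $L^2=2$, $(-K_X)^3=2$. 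For any other $(-2)$-curve $R'$ in the exceptional fibre over $P$ the Cartier condition $\tilde L\cdot R'=0$ reads $aE\cdot R'+R\cdot R'=0$, and since both summands are non-negative each vanishes; in particular $R\cdot R'=0$, so $R$ is the only exceptional curve over $P$ and $P$ is of type \type{A_1}. The base locus $\Bs|L|=\{P\}$ is then singular on $S$ and a fortiori on $X$, since a general hyperplane section of a smooth point is smooth. This is case~\ref{thm:bs0}.

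Finally, the equality $\iota(X)=1$ follows by contradiction: if $\iota(X)\ge 2$ then $X$ is a Gorenstein terminal del Pezzo threefold, for which $|-K_X|$ is known to be base-point-free (Fujita's classification), contradicting $\Bs|-K_X|\neq\emptyset$. The main obstacle I expect is the numerical bookkeeping in Case~2, in particular showing that $R$ is the entire exceptional divisor over $P$ (rather than a component of a longer ADE chain), and correctly transferring the singularity of $S$ at $P$ back to a singularity of $X$ at $P$.
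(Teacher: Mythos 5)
There is one genuine gap, and it sits precisely at the most delicate point of Case 2: the assertion that $P$ is singular \emph{for $X$}. Your justification --- ``a general hyperplane section of a smooth point is smooth'' --- is a Bertini-type statement, and Bertini says nothing at a base point: every member of $|-K_X|$ passes through $P$, and in your Case 2 the general member is in fact singular there, so no generic-smoothness principle can decide whether $X$ itself is smooth at $P$. Nor is the implication true for abstract linear systems: on a smooth threefold, a system whose local sections at $P$ all lie in $\mathfrak{m}_P^2$ has every member singular at its isolated base point, with the ambient space smooth. So ``$S$ singular at $P$ $\Rightarrow$ $X$ singular at $P$'' is genuine content --- essentially the hard local step of Shokurov's smoothness theorem for general elephants, extended to the canonical Gorenstein case by Reid and Mella (the sources behind Theorem~\xref{thm:ge}) --- and you must either invoke it in that refined form (general $S$ is smooth wherever $X$ is smooth) or argue directly. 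A direct argument is available with the tools you already set up: for two general members $S_1,S_2$, the intersection cycle $S_1\cdot S_2$ is $\mu_*(R+E_1+E_2)=C_1+C_2$, where each $C_i$ has $-K_X\cdot C_i=1$ and is smooth at $P$ (each elliptic fibre $E_i$ meets $R$ transversally in one point, and contracting the single $(-2)$-curve of an $A_1$-point sends a transversal germ to a smooth germ), so $\mult_P(S_1\cdot S_2)=2$; but if $X$ were smooth at $P$, then $\mult_P S_i=2$ would force $\mult_P(S_1\cdot S_2)\ge\mult_P S_1\cdot\mult_P S_2=4$, a contradiction.

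Apart from this, your route is sound and is in substance the one the paper indicates: the paper gives no proof, quoting the statement from Shin as a consequence of Saint-Donat's theory of linear systems on K3 surfaces, which is exactly what you reconstruct. Your numerical bookkeeping is correct --- the identity $0=\tilde L\cdot R'=aE\cdot R'+R\cdot R'$ with both summands nonnegative gives disjointness of $R$ from every $\mu$-exceptional curve (this also closes the disjointness claim you left implicit in Case 1), $\tilde L\cdot R=a-2=0$ gives $a=2$ and $-K_X^3=2$, and connectedness of the exceptional fibre over $P$ forces type \type{A_1}. Two small repairs: in Case 1, drop the appeal to Reid's cDV description, which is both unnecessary and inapplicable (the hypothesis is canonical, not terminal, so singularities need not be isolated cDV points); the fact you need is elementary --- if a Cartier divisor through $Q$ is smooth at $Q$, then $X$ is smooth at $Q$ by counting embedding dimension --- and it applies because $X$ is Gorenstein, so $S$ is Cartier. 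Likewise, in the index step read ``canonical'' for ``terminal''; base point freeness of $|-K_X|$ when $\iota(X)\ge2$ does hold in the canonical Gorenstein setting (Fujita, Shin), so that reduction is fine once restated.
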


\begin{scorollary}\label{cor:bs}
Under the conditions of Theorem~\xref{thm:bs}, assume that the singularities of $X$ are terminal.
Then a general divisor $S\in|-K_X|$ is smooth, except for the case~\xref{thm:bs}\xref{thm:bs0}.
\end{scorollary}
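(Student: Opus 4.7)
The plan is to isolate the two possible sources of singularities on a general $S\in|-K_X|$ and handle each in turn: the base locus $\Bs|-K_X|$, and the singular locus $\Sing X$.

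First, recall from the cDV theorem of Reid cited at the start of Section~2 that every three-dimensional terminal Gorenstein singularity is isolated, so $\Sing X$ is a finite set, say $\{P_1,\dots,P_k\}$. By Bertini's theorem in characteristic zero, a general $S\in|-K_X|$ is smooth at every point of $X\setminus(\Bs|-K_X|\cup\Sing X)$. This is the routine input.

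Next, I would use the hypothesis to rule out that a general $S$ passes through any $P_i$. In case \xref{thm:bs1} of Theorem~\xref{thm:bs}, the base locus is, by hypothesis, a smooth rational curve contained in the smooth locus of $X$, so $\Sing X\cap\Bs|-K_X|=\emptyset$. For each $P_i$ there is therefore some divisor in $|-K_X|$ not containing $P_i$, so the sublinear system of divisors through $P_i$ is a proper hyperplane in $|-K_X|$. The union of the $k$ hyperplanes corresponding to $P_1,\dots,P_k$ is a proper subset of $|-K_X|$, and a general $S$ avoids $\Sing X$.

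Finally, Theorem~\xref{thm:bs}\xref{thm:bs1} itself guarantees that a general $S$ is smooth along $\Bs|-K_X|$. Combining the three observations shows that in case \xref{thm:bs1} a general $S\in|-K_X|$ is smooth everywhere. I do not expect a real obstacle in this argument; the only conceptual point is that case \xref{thm:bs0} must be excluded precisely because there $\Bs|-K_X|$ coincides with a singular point of $X$, so the hyperplane-avoidance trick is unavailable and $S$ genuinely acquires the $A_1$ singularity described in Theorem~\xref{thm:bs}\xref{thm:bs0}.
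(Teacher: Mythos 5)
Your argument is correct and is essentially the one the paper leaves implicit: the corollary is stated there without proof, and the intended derivation is exactly your combination of Bertini on the smooth locus, the isolatedness of terminal Gorenstein (cDV) points of $X$ together with $\Sing X\cap\Bs|-K_X|=\varnothing$ in case~\xref{thm:bs}\xref{thm:bs1} (so a general $S$ avoids $\Sing X$), and the smoothness of a general $S$ along the base curve already asserted in Theorem~\xref{thm:bs}\xref{thm:bs1}. Your closing remark that in case~\xref{thm:bs}\xref{thm:bs0} the base point is singular on $X$, so the avoidance trick fails and $S$ genuinely acquires the \type{A_1} point, correctly identifies why that case is excepted.
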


\section {MMP on three-dimensional weak Fano threefolds}
This section is devoted to the proof of the following theorem.
\begin{theorem}\label{theorem:constr}
Let $X$ be a Fano threefold with terminal Gorenstein singularities.\footnote {Here we do not assume that $\uprho(X)=1$.}
Suppose that $X$ does not contain surfaces of anticanonical degree $1$, i.e. surfaces $S$ such that $(-K_X)^2\cdot S=1$.
Then there is a diagram
\begin{equation}\label{eq:constr}
\vcenter {
\xymatrix {
*+[l]{\hat X=\hat X^{(0)}}\ar[d]_{\tau}\ar[r]^{\varphi_1} &\hat X^{(1)}\ar[d]_{\tau_1}\ar[r]^{\varphi_{2}} &\cdots\ar[r]^{\varphi_{i}} &
\hat X^{(i)}\ar[d]_{\tau_i}\ar[r]^{\varphi_{i+1}} &\cdots\ar[r]^{\varphi_n} &\hat X^{(n)}\ar[d]_{\tau_n}\ar[dr]^\pi
\\
*+[l]{X=X^{(0)}} & X^{(1)} &\cdots & X^{(i)} &\cdots & X^{(n)} & Z
}}
\end{equation}
where $n\ge 0$, each $X^{(i)}$ is a Fano threefold with terminal Gorenstein singularities, $\tau_i$ is its small $\QQ$-factorialization
 \textup(or an isomorphism\textup),
$\hat X^{(i)}$ is a weak Fano threefold with terminal factorial singularities, and $\varphi_i:=X^{(i-1)}\to X^{(i)}$ is a birational Mori contraction which contracts $E_{i-1}\subset\hat X^{(i-1)}$ to a point or to a curve.
The morphism $\pi:\hat X^{(n)}\to Z$ is a Mori fiber space, where $Z$ is either a point, or a smooth rational curve, or a smooth del Pezzo surface. At each step of \eqref{eq:constr}
we have
\begin{equation}\label{eq:constr:rK}
\rk \Cl (X^{(i+1)})=\rk \Cl (X^{(i)})-1,\qquad (-K_{X^{(i+1)}})^3\ge (-K_{X^{(i)}})^3+2,
\end{equation}
where $\Cl(X^{(i)})$ is the Weil divisor class group of $X^{(i)}$.
We can choose the first $\QQ$-factorialization of $\tau:\hat X\to X$ and the sequence of contractions of $\varphi_i$ so that
$\hat X^{(n)}$ has no any divisorial $ K $-negative contractions, $\uprho(Z)\le2$, and
the variety $Z$ is either a point or isomorphic to $\PP^1$, $\PP^2$ or $\PP^1\times\PP^1$.
\end{theorem}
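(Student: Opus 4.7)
The plan is to construct \eqref{eq:constr} by iteratively running the MMP on successive small $\QQ$-factorializations. For the initial step, take any small $\QQ$-factorialization $\tau : \hat X \to X$; existence is standard for terminal threefolds, and Corollary~\ref{cor:Weil-Cartier} upgrades $\QQ$-factoriality to factoriality since all singularities are Gorenstein. Being a small crepant modification of a Fano variety, $\hat X$ is a weak Fano threefold with terminal factorial singularities whose anticanonical model is $X$.

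For the inductive step, assume $\hat X^{(i)}$ is a weak Fano threefold with terminal factorial singularities and anticanonical model the Fano threefold $X^{(i)}$ (which has terminal Gorenstein singularities by smallness of $\tau_i$). If every $K$-negative extremal contraction of $\hat X^{(i)}$ is of fibration type, pick one to serve as $\pi$ and set $n:=i$. Otherwise choose a $K$-negative divisorial extremal contraction $\varphi_{i+1}$, classified by Theorem~\ref{classification:extremal-rays} as one of types B_1--B_5. The central step is to rule out type B_5: there the exceptional divisor is $E \simeq \PP^2$ with $\OOO_E(E) = \OOO(-2)$, so adjunction gives $(-K_{\hat X^{(i)}})|_E = H$ and hence $(-K_{\hat X^{(i)}})^2 \cdot E = 1$, making $(\tau_i)_*E$ a prime divisor of anticanonical degree $1$ on $X^{(i)}$ in the sense of the hypothesis. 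I propagate this downward: across each previous divisorial contraction, a prime divisor of anticanonical degree $1$ on the target lifts (via strict transform and pushforward through the small $\tau_j$) to a prime divisor on the previous $X^{(j)}$ of anticanonical degree at most $1$; since $X^{(j)}$ is Fano, this degree is a positive integer on every prime divisor, hence exactly $1$. Iterating down to $X = X^{(0)}$ produces a prime divisor of anticanonical degree $1$ on $X$, contradicting the hypothesis.

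With B_5 excluded, the target $X^{(i+1)}$ of $\varphi_{i+1}$ has terminal Gorenstein singularities (smooth in types B_1, B_2; ordinary double point in B_3; $cA_1$-point in B_4), and setting $\hat X^{(i+1)}$ to be its small $\QQ$-factorialization---an isomorphism except in the B_3 case---closes the induction. The inequality $(-K_{X^{(i+1)}})^3 \ge (-K_{X^{(i)}})^3 + 2$ follows from direct intersection-theoretic computation in each of B_1--B_4, with borderline cases (for instance a B_1-blowup of a smooth rational curve of $-K$-degree $1$) ruled out by the weak Fano assumption on $\hat X^{(i)}$ itself. The drop $\rk \Cl(X^{(i+1)}) = \rk \Cl(X^{(i)}) - 1$ is immediate from extremality, and termination is forced because $\rk \Cl$ is a positive integer that strictly decreases. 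At termination we obtain the Mori fiber space $\pi$, whose base $Z$ is a point, $\PP^1$, or (by Corollary~\ref{cor:cb-fano}) a smooth del Pezzo surface. To sharpen this to $\uprho(Z) \le 2$ with $Z$ among a point, $\PP^1$, $\PP^2$, and $\PP^1\times\PP^1$, observe that any $(-1)$-curve on a del Pezzo $Z$ pulls back to an additional divisorial extremal contraction on $\hat X^{(n)}$; by arranging to perform such contractions before the fibration, one forces $Z$ to be minimal among smooth del Pezzos, leaving only $\PP^2$ and $\PP^1 \times \PP^1$. The main obstacle is the propagation argument excluding B_5 at every inductive stage (not only the first); once that is secured, the remaining work is a standard combination of Mori's classification and intersection-theoretic bookkeeping.
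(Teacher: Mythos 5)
There is a genuine gap at the heart of your induction: you never prove that $-K$ is nef on the target of the divisorial contraction. Contracting a $K$-negative divisorial extremal ray on a \emph{weak} Fano threefold does not automatically yield a weak Fano: a curve that is $K$-trivial on $\hat X^{(i)}$ and meets the exceptional divisor can become $K$-positive on the target, so $\hat X^{(i+1)}$ may fail to have nef anticanonical divisor, your $X^{(i+1)}$ may fail to be Fano, and the induction does not close. The paper devotes a specific step to exactly this point: if $-K_{\hat X^{(1)}}$ is not nef, then by \cite[Prop.~4.5 and Cor.~4.6]{Prokhorov-2005a} the only possibility is that $E\simeq\FF_1$ is contracted to a smooth rational curve with normal bundle $\OOO_{\PP^1}(-1)\oplus\OOO_{\PP^1}(-2)$, in which case $(-K_{\hat X})^2\cdot E=1$ and $\tau$ contracts the negative section of $E$, so $\tau(E)$ is a surface of anticanonical degree $1$ on $X$ --- excluded by hypothesis. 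This is the second, independent, place where the no-degree-one-surface assumption is used, and your proposal omits it entirely. A related sign of the trouble is your bookkeeping: the target of a divisorial extremal contraction from a factorial threefold is automatically $\QQ$-factorial (hence factorial by Corollary~\ref{cor:Weil-Cartier}), so in your scheme $\tau_{i+1}$ would always be an isomorphism, not ``an isomorphism except in the \type{B_3} case''; the genuinely non-trivial $\tau_i$ in the diagram arise because the anticanonical model map contracts $K$-trivial curves --- precisely the phenomenon you have not confronted. (Your exclusion of \type{B_5} by downward propagation is fine; it uses the same degree inequality as the paper's Lemma~\ref{slemma:tau} read in the opposite direction, cf.\ \cite[Lemma~2.5]{Prokhorov-v22}, and termination via the drop of $\rk\Cl$ also works.)

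The endgame is also gapped. Since $\hat X^{(n)}$ is arranged to have \emph{no} divisorial $K$-negative contractions, the contraction suggested by a $(-1)$-curve $C\subset Z$ cannot simply be ``performed before the fibration'': in the two-ray game over $\bar Z=\sigma(Z)$ the second extremal ray on $\hat X^{(n)}$ is a small $K$-trivial contraction, so one must first perform a flop $\chi:\hat X^{(n)}\dashrightarrow\hat X'$ and only on $\hat X'$ does the divisorial contraction over $\bar Z$ appear (the paper's Lemma~\ref{lemma:rho=3:negative}). Because the flop changes the birational model, the whole chain must then be rebuilt starting from a \emph{different} small $\QQ$-factorialization $\check X$ of $X$, obtained by running an MMP for the pullback of an ample divisor from $\hat X'$ (a sequence of flops over $X$); this is exactly why the theorem's statement allows choosing the first $\QQ$-factorialization. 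Finally, a small point: the inequality $(-K_{X^{(i+1)}})^3\ge(-K_{X^{(i)}})^3+2$ is obtained from the strict inequality $(-K_{\hat X^{(1)}})^3>(-K_{\hat X})^3$ together with the \emph{evenness} of both anticanonical degrees (Proposition~\ref{prop:g}); your appeal to the weak Fano assumption to rule out ``borderline cases'' does not by itself exclude an increase by $1$.
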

This construction was used also in \cite{Prokhorov-2005a} and \cite{Kaloghiros2011+err}.
For the convenience of the reader we present it here with complete proofs and with corrections
(see remark~\ref{sremark:Kal}).
Note that usually the sequence \eqref{eq:constr} is not unique.

\begin{proof}
Most of the assertions are inductive and can be proved by considering a single step $X^{(i-1)}\dashrightarrow X^{(i)}$.
Therefore, we consider only the first step $X=X^{(0)}\dashrightarrow X^{(1)}$.
The small $\QQ$-factorialization of $\tau:\hat X\to X$ exists according to \cite[Corollary 4.5]{Kawamata-1988-crep}.
Note that it is not unique and any two of them are different on the flops.
The variety $\hat X=\hat X^{(0)}$ has only terminal Gorenstein singularities and $K_{\hat X}=\tau^*K_X$.
Therefore, the divisor $-K_{\hat X}$ is nef and big, i.e. $\hat X$ is a weak Fano threefold.
Moreover, the linear system $|-nK_{\hat X}|$ for $n\gg 0$ defines a morphism that contracts only a finite number of curves.
The Mori cone of $\hat X$ is polyhedral:

\begin{slemma}[{\cite{Prokhorov-Shokurov-2009}}]
\label{lemma:FT}
Let $V$ be a projective variety with log terminal $\QQ$-factorial singularities such that the anticanonical class $-K_V$ is nef and big. Then the Mori cone $\NE(V)$ is polyhedral and generated by the classes of rational curves. For each extremal ray $R\subset\NE(V)$, there exists a contraction $\varphi_R:V\to Z$ which is
$K_V+\varepsilon\Delta$-negative for some effective $\QQ$-divisor $\Delta$ and $0 <\varepsilon\ll 1$.
\end{slemma}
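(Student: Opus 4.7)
The plan is to reduce the statement to the standard cone and contraction theorems for klt pairs by perturbing $V$ into a genuine klt log Fano pair. Since $-K_V$ is nef and big on the $\QQ$-factorial klt variety $V$, Kodaira's lemma lets me write
\[
-K_V \sim_{\QQ} A + \Delta
\]
for some ample $\QQ$-divisor $A$ and some effective $\QQ$-divisor $\Delta$. Then for every $\varepsilon \in (0,1)$ a direct computation gives
\[
-(K_V + \varepsilon\Delta) \sim_{\QQ} (1-\varepsilon)(-K_V) + \varepsilon A,
\]
which is a sum of a nef and an ample class, hence ample.

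Next, because $V$ itself has log terminal singularities, by the standard openness of the klt condition the pair $(V,\varepsilon\Delta)$ is klt for all sufficiently small $\varepsilon>0$; I fix such an $\varepsilon$. Applying the Kawamata--Shokurov cone theorem to the klt pair $(V,\varepsilon\Delta)$, one obtains that $\NE(V)$ is generated by its $(K_V+\varepsilon\Delta)$-non-negative part together with countably many $(K_V+\varepsilon\Delta)$-negative extremal rays, each spanned by the class of a rational curve. Since $-(K_V+\varepsilon\Delta)$ is ample, the non-negative part is trivial and only finitely many extremal rays survive, so $\NE(V)$ is polyhedral and generated by classes of rational curves. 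For each extremal ray $R$ the contraction theorem then produces the required $(K_V+\varepsilon\Delta)$-negative morphism $\varphi_R:V\to Z$.

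The only real work is in the first step: combining Kodaira's lemma with the openness of klt singularities to perturb $V$ into a log Fano pair, that is, to exhibit $V$ as being of Fano type. Once this is established, everything else is a direct invocation of the klt cone and contraction theorems, with no further obstacle. The same $\Delta$ and $\varepsilon$ in fact work uniformly for all extremal rays $R$, which is slightly more than the statement demands.
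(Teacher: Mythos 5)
Your proposal is correct and is essentially the argument the paper relies on: the paper gives no in-text proof, citing Prokhorov--Shokurov, and their treatment of varieties of Fano type rests on exactly your perturbation --- Kodaira's lemma to write $-K_V\sim_{\QQ}A+\Delta$, openness of the klt condition for $(V,\varepsilon\Delta)$, and the klt cone and contraction theorems applied to the resulting log Fano pair. Your observation that a single $\Delta$ and $\varepsilon$ work uniformly for all extremal rays is consistent with that source as well.
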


There is an extremal $K$-negative Mori contraction $\varphi_1:\hat X^{(0)}\to\hat X^{(1)}$.
If it is not birational, then we put
$n=0$, $Z:=\hat X^{(1)}$, $\pi:=\varphi_1$. If, in addition, $Z$ is a curve, then it is rational because $H^1 (Z,\OOO_Z)=H^1 (X,\OOO_X)=0$.
If the base $Z$ is two-dimensional, then it is a smooth del Pezzo surface according to Corollary~\ref{cor:cb-fano}.

Now suppose that $\varphi_1:\hat X^{(0)}\to\hat X^{(1)}$ is birational.
Then the singularities of $\hat X^{(1)}$ are terminal and $\QQ$-factorial, and $\varphi_1$ contracts a Cartier divisor $E\subset\hat X$ (see Theorem~\ref{classification:extremal-rays}).
If $\hat X^{(1)}$ is not Gorenstein, then according to the classification~\ref{classification:extremal-rays}
we have $E\simeq\PP^2$ and $\OOO_E (-K_{\hat X})=\OOO_E (1)$, i.e. $\tau(E)$ is a surface of anticanonical degree $1$.
This contradicts our assumptions. Therefore, the variety $\hat X^{(1)}$ is Gorenstein.
Suppose that the divisor $-K_{\hat X^{(1)}}$ is not nef.
Since the singularities of $\hat X$ are terminal, according to \cite[Prop. 4.5 and Cor. 4.6]{Prokhorov-2005a}, there is only one possibility: $\varphi_1(E)$ is a smooth rational curve $B_1$ lying in the smooth locus of $\hat X^{(1)}$ and having normal bundle 
\[
\NNN_{B_1 /\hat X^{(1)}}\simeq\OOO_{\PP^1}(-1)\oplus\OOO_{\PP^1}(-2),
\]
and the variety $\hat X^{(1)}$ is smooth along $E$, where $E$ is a rational ruled surface $\FF_1$. Moreover, in this case, $(-K_{\hat X})^2\cdot E=1$ and $\tau$ contracts the negative section of $E\simeq\FF_1$ (see \cite[Cor. 4.7]{Prokhorov-2005a}). This again contradicts our assumptions.
Thus, the divisor $-K_{\hat X^{(1)}}$ is nef. It is clear that it is big, i.e. $\hat X^{(1)}$ is a weak Fano threefold.
Let $\tau_1:\hat X^{(1)}\to X^{(1)}$ be the morphism to the anticanonical model.
\begin{slemma}\label{slemma:tau}
The morphism $\tau_1$ does not contract any divisors and $(-K_{X^{(1)}})^2\cdot S_1\ge2$ for any surface $S_1\subset X^{(1)}$.
\end{slemma}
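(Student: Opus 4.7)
The plan is to reduce both assertions to the single inequality
\[
(-K_{\hat X^{(1)}})^{2}\cdot\hat S\ge 2\qquad\text{for every irreducible surface }\hat S\subset\hat X^{(1)}.
\]
Granted this, the statement that $\tau_1$ is small is immediate: a divisor $\hat S$ contracted by $\tau_1$ would satisfy $(-K_{\hat X^{(1)}})|_{\hat S}\equiv 0$, giving $(-K_{\hat X^{(1)}})^{2}\cdot\hat S=0$, a contradiction. Once $\tau_1$ is small it is still automatically crepant, so for any surface $S_1\subset X^{(1)}$ the projection formula applied to its strict transform $\hat S$ yields $(-K_{X^{(1)}})^{2}\cdot S_1=(-K_{\hat X^{(1)}})^{2}\cdot\hat S\ge 2$, which is the second half of the lemma.

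To prove the key inequality I would take an arbitrary irreducible surface $\hat S\subset\hat X^{(1)}$ and let $S\subset\hat X$ be its strict transform under $\varphi_1$. Since $\varphi_1(E)$ has dimension $\le 1$, $S$ is a well-defined irreducible divisor with $S\ne E$ and $\varphi_{1*}S=\hat S$. The small crepant morphism $\tau$ sends $S$ to an irreducible surface $S'=\tau(S)\subset X$, and $K_{\hat X}=\tau^{*}K_X$ together with the projection formula give $(-K_{\hat X})^{2}\cdot S=(-K_X)^{2}\cdot S'$. Ampleness of $-K_X$ makes the right-hand side a positive integer, and the standing hypothesis excludes the value $1$; hence $(-K_X)^{2}\cdot S'\ge 2$.

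It remains to compare $(-K_{\hat X})^{2}\cdot S$ with $(-K_{\hat X^{(1)}})^{2}\cdot\hat S$. Using the discrepancy formula $\varphi_1^{*}(-K_{\hat X^{(1)}})=-K_{\hat X}+aE$ with $a\in\ZZ_{>0}$ (as $\varphi_1$ is a $K$-negative divisorial contraction between Gorenstein terminal threefolds), the projection formula yields
\[
(-K_{\hat X^{(1)}})^{2}\cdot\hat S
=(-K_{\hat X})^{2}\cdot S
+a(-K_{\hat X})\cdot E\cdot S
+a\,\varphi_1^{*}(-K_{\hat X^{(1)}})\cdot E\cdot S.
\]
Each of the two correction terms is the intersection on $S$ of a nef divisor class---either $-K_{\hat X}$ or the pullback $\varphi_1^{*}(-K_{\hat X^{(1)}})$---with the effective divisor $E|_S$ (effective since $S\ne E$), hence is nonnegative. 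Combining gives $(-K_{\hat X^{(1)}})^{2}\cdot\hat S\ge(-K_{\hat X})^{2}\cdot S\ge 2$, as required. The only point that requires care is the nonnegativity of the correction, but this is a standard nef-times-effective calculation on the (possibly singular) surface $S$, so no serious obstacle is expected.
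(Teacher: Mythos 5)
Your proof is correct and takes essentially the same route as the paper: the paper reduces the lemma to exactly the inequality $(-K_{\hat X^{(1)}})^{2}\cdot\varphi_{1*}\hat S\ge(-K_{\hat X})^{2}\cdot\hat S=(-K_{X})^{2}\cdot\tau(\hat S)>1$, which it cites from \cite[Lemma~2.5]{Prokhorov-v22}, while you verify it directly via $\varphi_1^{*}(-K_{\hat X^{(1)}})=-K_{\hat X}+aE$ and nef-times-effective nonnegativity. The only cosmetic slip is the claim that a $\tau_1$-contracted divisor has $(-K_{\hat X^{(1)}})|_{\hat S}\equiv 0$, which fails when $\tau_1(\hat S)$ is a curve, but your operative conclusion $(-K_{\hat X^{(1)}})^{2}\cdot\hat S=0$ holds in all cases by the projection formula, so the argument stands.
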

\begin{proof}
By the projection formula, it is easy to show that for any irreducible surface $\hat S\subset\hat X$, other than $E$, the inequality
\[
(-K_{\hat X^{(1)}})^2\cdot\varphi_1^*(\hat S)\ge (-K_{\hat X})^2\cdot\hat S=(-K_{X})^2\cdot\tau(\hat S)>1
\]
 holds (see, e.g., \cite[Lemma~2.5]{Prokhorov-v22}).
\end{proof}
It follows from Lemma \ref{slemma:tau} that the singularities of $X^{(1)}$ are terminal (and Gorenstein).
By construction, we have 
\[
\rk \Cl (\hat X^{(1)})=\uprho(\hat X^{(1)})=\uprho(\hat X)-1=\rk \Cl (X)-1.
\]
\begin{slemma}\label{lemma:constr:K}
$(-K_{\hat X^{(1)}})^3\ge (-K_{\hat X})^3+2$.
\end{slemma}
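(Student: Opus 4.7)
\emph{Plan.} I would apply the classification of extremal Mori contractions (Theorem~\ref{classification:extremal-rays}) to the divisorial contraction $\varphi_1:\hat X\to\hat X^{(1)}$. Because $\hat X^{(1)}$ has already been shown to be Gorenstein, type $B_5$ is excluded, leaving types $B_1$ (target a curve) and $B_2,B_3,B_4$ (target a point). In each of the remaining cases the discrepancy satisfies $K_{\hat X}=\varphi_1^{*}K_{\hat X^{(1)}}+aE$ with $a\in\{1,2\}$, so cubing $\varphi_1^{*}(-K_{\hat X^{(1)}})=-K_{\hat X}+aE$ gives
\[
(-K_{\hat X^{(1)}})^{3}-(-K_{\hat X})^{3}
= 3a(-K_{\hat X})^{2}\cdot E+3a^{2}(-K_{\hat X})\cdot E^{2}+a^{3}E^{3},
\]
and it remains to bound this sum below by $2$.

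For the point contractions $B_2,B_3,B_4$ the projection formula yields $(\varphi_1^{*}H)^{2}\cdot E=0$ and $\varphi_1^{*}H\cdot E^{2}=0$, where $H:=-K_{\hat X^{(1)}}$. Substituting $\varphi_1^{*}H=-K_{\hat X}+aE$ into these two vanishings expresses $(-K_{\hat X})^{2}\cdot E$ and $(-K_{\hat X})\cdot E^{2}$ in terms of $E^{3}$ and collapses the increment to $a^{3}E^{3}$. Reading off $E|_{E}$ from Theorem~\ref{classification:extremal-rays} then gives the values $8$ (type $B_2$, where $a=2$ and $E^{3}=1$), $2$ (type $B_3$, where $E\simeq\PP^{1}\times\PP^{1}$ and $E^{3}=2$), and $2$ (type $B_4$, where $E$ is a quadric cone and $E^{3}=2$), so the inequality is immediate.

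The real work is in type $B_1$, where $\varphi_1$ is the blow-up of a smooth curve $C\subset\hat X^{(1)}$ of genus $g$, $a=1$, and $E$ is a $\PP^{1}$-bundle over $C$. Setting $d:=H\cdot C\ge 0$ (nonnegative by nefness of $-K_{\hat X^{(1)}}$), the analogous identities read $(\varphi_1^{*}H)^{2}\cdot E=0$ and $\varphi_1^{*}H\cdot E^{2}=-d$, and adjunction along $C$ yields $E^{3}=-\deg\NNN_{C/\hat X^{(1)}}=2-2g-d$. A short calculation then gives
\[
(-K_{\hat X})^{2}\cdot E=d+2-2g,\qquad (-K_{\hat X^{(1)}})^{3}-(-K_{\hat X})^{3}=2d+2-2g,
\]
so the increment equals $(-K_{\hat X})^{2}\cdot E+d$. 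Here the standing hypothesis of the theorem intervenes: since $\tau$ is small, the projection formula gives $(-K_{\hat X})^{2}\cdot E=(-K_{X})^{2}\cdot\tau(E)$, a positive integer that, by assumption, is $\ne 1$ and therefore $\ge 2$; combined with $d\ge 0$ this yields the desired bound. This is the main obstacle: without the no-surface-of-anticanonical-degree-one hypothesis, one could not control $d-2g$ in case $B_1$, and essentially the whole content of that hypothesis is consumed precisely here.
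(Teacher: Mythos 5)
Your proof is correct, but it closes the estimate by a genuinely different mechanism than the paper. The paper expands the same cube but never computes the increment exactly: it establishes only strict positivity, observing that $(-K_{\hat X})^2\cdot E=2\varphi_1^*K_{\hat X^{(1)}}\cdot E^2+E^3>0$ (this is the anticanonical degree of the surface $\tau(E)\subset X$, positive because $\tau$ is small) and $\varphi_1^*K_{\hat X^{(1)}}\cdot E^2=\varphi_1^*(-K_{\hat X^{(1)}})\cdot(-K_{\hat X})\cdot E\ge 0$ (nef times nef times effective), whence $(-K_{\hat X^{(1)}})^3>(-K_{\hat X})^3$; it then upgrades $>0$ to $\ge 2$ by a parity argument, since both cubes are even by Proposition~\ref{prop:g}. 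That route is uniform across types \type{B_1}--\type{B_4} and consumes nothing beyond smallness of $\tau$. Your computation buys more: the exact increments $8$, $2$, $2$ in the point cases and the formula $2d+2-2p_a$ in case \type{B_1}. However, your closing claim that the no-surface-of-anticanonical-degree-one hypothesis ``is consumed precisely here'' does not match the paper: there the hypothesis is spent \emph{earlier} in the proof of Theorem~\ref{theorem:constr} (to exclude the non-Gorenstein \type{B_5} contraction and the case where $-K_{\hat X^{(1)}}$ fails to be nef), and the lemma itself does not need it --- indeed your own identity, increment $=(-K_{\hat X})^2\cdot E+d$, together with $(-K_{\hat X})^2\cdot E\ge 1$ and evenness, already gives $\ge 2$. (Your invocation of the hypothesis is nevertheless legitimate at every step of the induction, since Lemma~\ref{slemma:tau} propagates it to the $X^{(i)}$.) One small repair: in type \type{B_1} the center $\varphi_1(E)$ is only a curve with planar singularities by Theorem~\ref{classification:extremal-rays}, so ``smooth of genus $g$'' should read ``arithmetic genus $p_a$''; this is harmless, since the identity increment $=(-K_{\hat X})^2\cdot E+d$ follows from $(\varphi_1^*H)^2\cdot E=0$ and $\varphi_1^*H\cdot E^2=-d$ alone, with no adjunction input.
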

\begin{proof}
Consider, for example, the case where $\varphi_1$ is a contraction of type \type{B_1}. Then $\varphi_1(E)$ is a curve and $K_{\hat X}=\varphi_1^*K_{\hat X^{(1)}}+E$. We can write
\[
(-K_{\hat X})^3=(-K_{\hat X^{(1)}})^3 -3\varphi_1^*K_{\hat X^{(1)}}\cdot E^2-E^3,
\]
\[
2\varphi_1^*K_{\hat X^{(1)}}\cdot E^2+E^3=(-K_{\hat X})\cdot E>0,\quad
\varphi_1^*K_{\hat X^{(1)}}\cdot E^2=\varphi_1^*(-K_{\hat X^{(1)}})\cdot (-K_{\hat X})\cdot E\ge 0.
\]
Hence $(-K_{\hat X^{(1)}})^3>(-K_{\hat X})^3$. The required inequality now follows from the fact that both numbers $(-K_{\hat X^{(1)}})^3$ and $(-K_{\hat X})^3$ are even
(see Proposition~\ref{prop:g}). Cases of types \type{B_2}-\type{B_4} are treated in the same way.
\end{proof}
Lemma~\ref{lemma:constr:K} proves \eqref{eq:constr:rK}.
Thus, for $X^{(1)}$ all inductive assumptions are satisfied and we can continue the process \eqref{eq:constr}.
The process terminates quickly because the degree of Fano threefolds with canonical Gorenstein singularities is less or equal to 72 (see \cite{Prokhorov-2004a}).
Let us prove the last assertion. We may assume that $\hat X^{(n)}$ does not have divisorial extremal contractions.
Suppose that $Z$ is a del Pezzo surface different from $\PP^2$ and $\PP^1\times\PP^1$.
Then $Z$ contains a $(-1)$-curve.

\begin{slemma}\label{lemma:rho=3:negative}
Suppose that on $Z$ there exists a curve $C$ with negative self-intersection number.
Then $C$ is a $(-1)$-curve and there is a commutative diagram
\[
\xymatrix@R=11pt {
\hat X^{(n)}\ar[d]^{\pi}\ar@ {-->}[r]^\chi &\hat X'\ar[r]^{\varphi} &\bar X\ar[d]^{\bar\pi}
\\
Z\ar[rr]^{\sigma} &&\bar Z
}
\]
where $\sigma $ is a contraction of $C$, $\chi $ is a flop, $\varphi $ is a divisorial contraction, and $\bar\pi$ is a conic bundle.
\end{slemma}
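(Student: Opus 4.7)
The plan is to treat the lemma as a Sarkisov-type two-ray game over $\bar Z$ and to perform the birational modifications step by step. To begin, $C$ is a $(-1)$-curve: since $Z$ is a smooth del Pezzo surface, $-K_Z\cdot C\ge 1$, and adjunction $C^2+K_Z\cdot C=2\p(C)-2\ge -2$ gives $C^2\ge -1$; combined with $C^2<0$, this forces $C^2=-1$, $\p(C)=0$, and $C\simeq\PP^1$. Let $\sigma\colon Z\to\bar Z$ denote the contraction of $C$.

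Set $S:=\pi^{-1}(C)$, an irreducible divisor since $\uprho(\hat X^{(n)}/Z)=1$. The composition $f:=\sigma\circ\pi\colon\hat X^{(n)}\to\bar Z$ has $\uprho(\hat X^{(n)}/\bar Z)=2$, so by the relative cone theorem $\NE(\hat X^{(n)}/\bar Z)$ has exactly two extremal rays: $R_1=[\text{fiber of }\pi]$, which is $K$-negative, and a second ray $R_2$ whose representatives are curves in $S$ horizontal over $C$. From $S=\pi^*C$ and $C^2=-1$ one obtains $\OOO_S(S)=\pi|_S^*\OOO_{\PP^1}(-1)$, whence $S\cdot R_2<0$ and $S\cdot R_1=0$.

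The next step is to show $R_2$ is $K$-trivial. Nefness of $-K_{\hat X^{(n)}}$ yields $K\cdot R_2\le 0$. Supposing $K\cdot R_2<0$: by Theorem~\ref{classification:extremal-rays} the contraction of $R_2$ over $\bar Z$ would be either divisorial or a fibration, since no small $K$-negative extremal rays exist on terminal factorial Gorenstein 3-folds. A fibration is impossible, as $R_2$-curves lie in the divisor $S$ and cannot cover $\hat X^{(n)}$; a divisorial contraction would contract $S$, contradicting the standing hypothesis that $\hat X^{(n)}$ has no $K$-negative divisorial contractions. Hence $K\cdot R_2=0$, and the contraction $\psi\colon\hat X^{(n)}\to U$ of $R_2$ over $\bar Z$ is $K$-trivial. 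It must also be small: were $\psi$ divisorial, it would contract $S$ crepantly, so the anticanonical morphism $\tau_n\colon\hat X^{(n)}\to X^{(n)}$ would factor through $\psi$ and would also contract $S$, violating the smallness of $\tau_n$ required by the Gorenstein-terminal property of $X^{(n)}$. Therefore $\psi$ is a flopping contraction; let $\chi\colon\hat X^{(n)}\dashrightarrow\hat X'$ be its flop. Then $\hat X'$ is again a terminal factorial weak Fano threefold.

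On $\hat X'$ the cone $\NE(\hat X'/\bar Z)$ is generated by the flopped ray $R_2^+$ ($K$-trivial) and a second ray $R_1'$ which, by the same nef argument, is $K$-negative. Its contraction $\varphi\colon\hat X'\to\bar X$ over $\bar Z$ is divisorial or a fibration; the main obstacle is to exclude the fibration case. If $\varphi$ were a fibration to a surface $Y$, then $Y$ would be smooth with $\uprho(Y/\bar Z)=1$, hence the blowup of $\bar Z$ at a single point — necessarily $\sigma(C)$, since the $R_2^+$-curves (not contracted by $\varphi$) are sent to $\sigma(C)$ by $f'$ and must form the exceptional curve of $Y\to\bar Z$. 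This would give $Y\simeq Z$ and an extension of $\pi$ across the flop to a morphism $\hat X'\to Z$; but such an extension would force $\chi$ to commute with the projections to $Z$, i.e.\ the flop would occur over $Z$, contradicting the fact that the flopped curves in $R_2$ are horizontal over $C\subset Z$ and hence are not contracted by $\pi$. Consequently $\varphi$ is divisorial, its exceptional divisor is necessarily $S':=\chi_*S$ (the only vertical divisor over $\sigma(C)$), and $S'$ is contracted to a curve in $\bar X$. Finally $\uprho(\bar X/\bar Z)=1$ and the generic fiber of $\bar\pi$ is a conic inherited from $\pi$, so $\bar\pi\colon\bar X\to\bar Z$ is a conic bundle, completing the required diagram.
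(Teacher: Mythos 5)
Your proof is correct and follows essentially the same two-ray game over $\bar Z$ as the paper's: contract $C$, show the second extremal ray of $\NE(\hat X^{(n)}/\bar Z)$ is small and $K$-trivial (divisorial excluded by the standing hypothesis, fibration excluded), flop, and then contract the resulting $K$-negative ray, which again cannot be a fibration, so it is divisorial and the final model is a conic bundle over $\bar Z$. Your additions --- the adjunction computation showing $C$ is a $(-1)$-curve, the $S=\pi^*C$ sign bookkeeping used to exclude the fibration cases, and the anticanonical-morphism argument for smallness --- merely flesh out steps the paper leaves implicit (it instead excludes conic bundles by noting the generic fibers of the two fibrations over $\bar Z$ would coincide, and deduces $K$-triviality from the Mori--Cutkosky classification).
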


\begin{proof}
Let $\sigma:Z\to\bar Z$ be a contraction of $C$. Then $\uprho(\hat X^{(n)} /\bar Z)=2$ and therefore the relative Mori cone $\NE(\hat X^{(n)} /\bar Z)$ is generated by two extremal rays. Both rays are contractible by Lemma~\ref{lemma:FT}.
The contraction of one of them is our conic bundle $\pi$.
The second contraction $\tau: \hat X^{(n)}\to X_\bullet$ 
cannot be a conic bundle (because in that case the generic fibers $\varphi$ and $\pi$ would coincide).
By our assumption the variety 
$\hat X^{(n)}$ has no divisorial extremal contractions.
 Hence, $\tau$ is a small contraction. By Theorem~\ref{classification:extremal-rays} it is $K$-trivial. 
Therefore, there exists a flop $\chi: \hat X^{(n)}\dashrightarrow\hat X'$. 
Again, the divisor $-K_{\hat X'}$ is nef and big, and $\uprho(\hat X' /\bar Z)=2$. So, on $\hat X'$ there is a $K_{\hat X'}$-negative contraction of $\varphi:\hat X'\to\bar X$ over $\bar Z$ which again cannot be a conic bundle. Therefore, $\varphi $ is divisorial and $\bar X$ is a threefold with factorial terminal singularities. Since $\uprho(\bar X /\bar Z)=1$, the projection $\bar\pi:\bar X\to\bar Z$ is a conic bundle.
\end{proof}

Going back to the proof of Theorem~\ref{theorem:constr}, let $A'$ be a very ample divisor on $\hat X'$ and let $A_n $ be its preimage on $\hat X^{(n)}$.
We put $A_i:=\varphi_{i+1}^*A_{i+1}$. Then $A_1$ is a big divisor on $\hat X$ such that the linear system $|A_1|$ has no fixed
components and defines a map 
\[
\Phi_{|A'|}:\hat X\dashrightarrow X'\subset\PP^{\dim|A'|}. 
\]
By Lemma~\ref{lemma:FT} we can run $A_1$-MMP on $\hat X$.
At each step, the exceptional locus is contained in the base locus of the proper transform of the linear system $|A_1|$.
In particular, it is one-dimensional. Hence, the corresponding birational transformation is the flop over $X$. After a finite number of flops, we get a new model
$\check\tau:\check X\to X$ such that the proper transform $\check A_1$ of the divisor $A_1$ is nef.
In this situation, the birational map $\check X\dashrightarrow\hat X'$ extends to a morphism $\check X\to\hat X'$ that can be decomposed
in the composition of extremal Mori contractions. Replace $\hat X$ with $\check X$ and $Z$ with $\bar Z$. This reduces the Picard number of the surface $Z$.
After a finite number of such transformations we achieve the situation where $Z$ does not contain $(-1)$-curves. Theorem~\ref{theorem:constr} is proved.
\end{proof}

\begin{sremark}
If, under the conditions of Theorem~\ref{theorem:constr}, the variety $X$ is smooth, then the sequence \eqref{eq:constr} can be chosen so that all the maps $\tau_i$ are isomorphisms and $\varphi_i$ are blowups of smooth curves, see \cite[(8.1)]{Mori-Mukai1983}.
\end{sremark}

\begin{sremark}
Under the conditions of Theorem~\ref{theorem:constr}, let $\iota(X)=2$. Then $\iota(X^{(i)})=2$ or $4$ for all $i$ and all morphisms of $\varphi_i$ are blowups of smooth points, see \cite{Prokhorov-GFano-1}.
\end{sremark}

\begin{sexample}[{\cite{Prokhorov-GFano-1}}]\label{example:constr}
Let $X^{(n)}=X^{(1)}:=\PP^1\times\PP^1\times\PP^1$ and let $X^{(1)}\subset\PP^7$ be the Segre embedding.
Let $X^{(1)}\dashrightarrow X=X^{(0)}\subset\PP^6$ be the projection from a point $P\in X^{(1)}$. This map is embedded in the diagram
of type \eqref{eq:constr}:
\[
\xymatrix@R=9pt {
\hat X\ar[d]^{\tau}\ar[r]^{\varphi_1} &\hat X^{(1)}\ar@ {=}[d]\ar[dr]^{\pi}
\\
X & X^{(1)}\ar@ {-->}[l] & *+[r]{Z=\PP^1\times\PP^1}
}
\]
Here $\varphi_1$ is the blowup of $P$ and the morphism $\tau$ contracts the proper transforms $\hat l_1,\,\hat l_2,\,\hat l_3$ of three lines $ l_i\subset X^{(1)}\subset\PP^7$
passing through the point $P$. Therefore, the singular locus of the variety $X$ consists of three ordinary double points and the anticanonical class of the variety $X$ is an ample Cartier divisor.
It is easy to see that the classes of curves $\hat l_1,\,\hat l_2,\,\hat l_3$ are linearly independent in $H_2 (\hat X,\QQ)$.
Hence, $\uprho(X)=1$.
\end{sexample}

\begin{sexample}[{\cite{Prokhorov-GFano-1}}]\label{example:constr1}
Let $X\subset\PP^4$ be the Segre cubic. 
This is a hypersurface of degree 3 having 10 singular points which are ordinary double.
The cubic $X$ is the half-anticanonical image of the blowup of $\PP^3$ at five general points (see e.g. \cite[\S~7]{Prokhorov-GFano-1}). Hence
there is a diagram of type \eqref{eq:constr}:
\[
\xymatrix@R=11pt@C=37pt{
\hat X\ar[d]^{\tau}\ar[r]^{\varphi_1} &\hat X^{(1)}\ar[d]^{\tau_1}\ar[r]^{\varphi_2} &\hat X_2\ar[d]^{\tau_2}\ar[r]^{\varphi_3} &\hat X_3\ar[d]^{\tau_3}\ar[r]^{\varphi_4}&\hat X_4\ar@{=}[d]\ar[r]^(.3){\varphi_5} & *+[l]{\hat X_5=\PP^3}\ar@{=}[d]\ar[dr]^{\pi}
\\
X & X^{(1)}& X_2& X_3& X_4&X_5&\mathrm{pt}
} 
\]
Here each $\varphi_i$ is the blowup of a point, $-K_{X^{(i)}}^3=8 (3+i)$ for all $i$ and $\uprho(X^{(i)})=1$ for  $i=0,\dots, 3$, and $\uprho(X_4)=2$.
\end{sexample}
 
\begin{sexample}[cf. \cite{Cheltsov:boundFano-en}]
Let $X^{(n)}$ be a Fano threefold with terminal Gorenstein singularities and let $P\in X^{(n)}$ be a smooth point.
Suppose that $X^{(n)}$ and $P$ satisfy the following conditions:
\begin{enumerate}
\renewcommand\labelenumi {\rm (\alph {enumi})}
\renewcommand\theenumi {\rm (\alph {enumi})}
\item
the anticanonical divisor is very ample and defines an embedding $X^{(n)}\subset\PP^{g+1}$;
\item
$X^{(n)}\subset\PP^{g+1}$ is an intersection of quadrics;
\item
the lines lying in $X^{(n)}$ do not pass through $P$ and the projection from the tangent space to $X^{(n)}$ to $P$ does not contract divisors.
\end{enumerate}
Let $\tau:\hat X^{(n)}\to X^{(n)}$ be a $\QQ$-factorization, let $\varphi_n:\hat X^{(n-1)}\to\hat X^{(n)}$ be the blowup of 
points $\tau^{-1}(P)$, and let $X^{(n-1)}$ be the anticanonical model $\hat X^{(n-1)}$.
Then $X^{(n-1)}$ is a Fano threefold with terminal Gorenstein singularities.
If the variety $X^{(n-1)}$ and some point on it satisfy the same conditions as $X^{(n)}$, then we can continue the process. We obtain the diagram
\eqref{eq:constr}, where each $\varphi_i$ is the blowup of a point.
\end{sexample}

\begin{proposition}\label{prop:iductive}
Notation as in Theorem~\xref{theorem:constr}.
\begin{enumerate}
\item\label{prop:iductive1}
If $\Bs|-K_X|=\varnothing$, then $\Bs|-K_{X^{(i)}}|=\varnothing$ for all $ i$.
\item\label{prop:iductive2}
If the divisor $-K_X$ is very ample, then the divisor $-K_{X^{(i)}}$ is also very ample for all $ i$.
\end{enumerate}
\end{proposition}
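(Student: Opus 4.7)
The plan is to prove both assertions simultaneously by induction on $i$, reducing each inductive step $X^{(i-1)}\dashrightarrow X^{(i)}$ to an analysis of the single divisorial Mori contraction $\varphi_i\colon\hat X^{(i-1)}\to\hat X^{(i)}$. Since every $\tau_j\colon\hat X^{(j)}\to X^{(j)}$ is small and crepant, pullback identifies $H^0(\hat X^{(j)},-K_{\hat X^{(j)}})$ with $H^0(X^{(j)},-K_{X^{(j)}})$ and gives $\Bs|-K_{\hat X^{(j)}}|=\tau_j^{-1}\Bs|-K_{X^{(j)}}|$. Consequently, base point freeness descends between $X^{(j)}$ and $\hat X^{(j)}$, and very ampleness of $-K_{X^{(j)}}$ is equivalent to $|-K_{\hat X^{(j)}}|$ being base point free and inducing a morphism that factors as $\tau_j$ composed with an embedding.

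The key computation is the discrepancy identity: because $\hat X^{(i)}$ is Gorenstein, $\varphi_i$ is of type \type{B_1}, \type{B_2}, \type{B_3}, or \type{B_4} from Theorem~\xref{classification:extremal-rays}, each with discrepancy one, so $\varphi_i^*(-K_{\hat X^{(i)}})=-K_{\hat X^{(i-1)}}+E_{i-1}$ and $H^0(\hat X^{(i)},-K_{\hat X^{(i)}})\cong H^0(\hat X^{(i-1)},-K_{\hat X^{(i-1)}}+E_{i-1})$. Under this identification the sublinear system $|-K_{\hat X^{(i-1)}}|+E_{i-1}$ (divisors containing $E_{i-1}$ as a fixed component) is base point free off $E_{i-1}$ by the inductive hypothesis, so $\Bs|-K_{\hat X^{(i)}}|\subseteq\varphi_i(E_{i-1})$. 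To eliminate the residual base locus I would apply Kawamata-Viehweg vanishing $H^1(\hat X^{(i-1)},-K_{\hat X^{(i-1)}})=0$ on the weak Fano $\hat X^{(i-1)}$ to the restriction sequence
\[
0\to\OOO(-K_{\hat X^{(i-1)}})\to\OOO(-K_{\hat X^{(i-1)}}+E_{i-1})\to\OOO_{E_{i-1}}\bigl(\varphi_i^*(-K_{\hat X^{(i)}})|_{E_{i-1}}\bigr)\to 0,
\]
obtaining surjectivity of the restriction onto $H^0(E_{i-1},\varphi_i^*(-K_{\hat X^{(i)}})|_{E_{i-1}})$.

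For types \type{B_2}, \type{B_3}, \type{B_4} the center $\varphi_i(E_{i-1})$ is a point and $\varphi_i^*(-K_{\hat X^{(i)}})|_{E_{i-1}}\cong\OOO_{E_{i-1}}$ is trivial, so the surjection produces a section non-vanishing on all of $E_{i-1}$ and (i) is complete; for (ii) the same framework, with the key sequence twisted by the ideal of a length-two subscheme and KV vanishing applied with an auxiliary nef-and-big divisor, reduces separation of points and tangent vectors to the same type of restriction question, again trivial in types \type{B_2}--\type{B_4}. The main obstacle is type \type{B_1}, where $\varphi_i(E_{i-1})=B$ is a curve with planar singularities lying in the smooth locus of $\hat X^{(i)}$ and the restricted bundle is the pullback of $-K_{\hat X^{(i)}}|_B$ along the $\PP^1$-bundle $E_{i-1}\to B$; one must then verify base point freeness (respectively, very ampleness) of $-K_{\hat X^{(i)}}|_B$ on the singular curve $B$. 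I would approach this by combining the degree growth \eqref{eq:constr:rK} with Shin's Theorem~\xref{thm:bs}, whose singular-point alternative is excluded at steps $i\ge 1$ by the strict inequality $(-K_{X^{(i)}})^3\ge(-K_X)^3+2i>2$, leaving only the smooth-rational-curve alternative to be ruled out by comparing the base loci on $\hat X^{(i-1)}$ and $\hat X^{(i)}$ through $\varphi_i$.
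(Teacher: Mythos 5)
Your reduction of (i) to the single contraction $\varphi_i$ is fine up to the point where you confine $\Bs|-K_{\hat X^{(i)}}|$ to the center $\varphi_i(E_{i-1})$, and the exclusion of Shin's point alternative via the degree growth \eqref{eq:constr:rK} is correct; but the plan then stalls exactly at the step you yourself call the main obstacle, and there it has a genuine gap. In type \type{B_1} the center $B=\varphi_i(E_{i-1})$ can perfectly well be a smooth rational curve, so Shin's remaining alternative $Z=B$ is consistent with everything you have established, and ``comparing the base loci through $\varphi_i$'' produces no contradiction. You also cannot close the case by quoting Theorem~\ref{theorem:Bs:hyp:trig-0}: its final contradiction (irreducibility of $D$, the extremal-ray estimates) uses $\uprho(X)=1$ essentially, whereas $\uprho(X^{(i)})$ may be greater than $1$ along the chain (Remark~\ref{sremark:Kal}, Examples~\ref{example:constr} and~\ref{example:constr1}). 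On the local side, your surjectivity only reduces (i) to base point freeness of $-K_{\hat X^{(i)}}|_B$ on a curve $B$ with planar singularities, and on a singular curve positive degree does not imply freeness. The paper's proof supplies precisely the missing mechanism, and it is global rather than local: from the \emph{proofs} of Theorems~\ref{theorem:Bs:hyp:trig-0} and~\ref{theorem:Bs:hyp:trig-1} one extracts the $\uprho$-independent criterion that a nonempty base locus (curve case) forces the anticanonical image to be a surface (Saint-Donat's elliptic pencil), while birationality of the anticanonical map forces very ampleness; since $H^0(-K_{X^{(i)}})\simeq H^0\bigl(\varphi_{i+1}^*(-K_{\hat X^{(i+1)}})-E_i\bigr)\subset H^0(-K_{X^{(i+1)}})$, the anticanonical map of $X^{(i)}$ is a linear projection of that of $X^{(i+1)}$ precomposed with a birational map (this is the role of the phrase ``the maps do not contract divisors''), so three-dimensionality of the image, resp.\ birationality, propagates along \eqref{eq:constr} and rules out either degeneration. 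You never make this comparison, and without it the \type{B_1} case of (i), hence the induction, remains open.

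Two further defects. The discrepancy claim is false in type \type{B_2}: there $K_{\hat X^{(i-1)}}=\varphi_i^*K_{\hat X^{(i)}}+2E_{i-1}$, so the kernel in your displayed sequence is $\varphi_i^*(-K_{\hat X^{(i)}})-E_{i-1}=-K_{\hat X^{(i-1)}}+E_{i-1}$, not $-K_{\hat X^{(i-1)}}$; this is repairable, since $-K_{\hat X^{(i-1)}}+E_{i-1}-K_{\hat X^{(i-1)}}=-2K_{\hat X^{(i-1)}}+E_{i-1}$ is still nef and big, but as written the identity is wrong. More seriously, the sketch of (ii) is not a proof: separating tangent vectors at the contracted point would require $H^1\bigl(\varphi_i^*(-K_{\hat X^{(i)}})-2E_{i-1}\bigr)=0$, and in the discrepancy-one cases this divisor minus $K_{\hat X^{(i-1)}}$ equals $-2K_{\hat X^{(i-1)}}-E_{i-1}$, which fails to be nef whenever a $K$-trivial (flopping) curve meets $E_{i-1}$ --- the very situation the small maps $\tau_i$ exist to handle --- so Kawamata--Viehweg does not apply as invoked; and the configurations where two points of $X^{(i)}$ both lie over $E_{i-1}$ or over $\tau$-exceptional curves are not addressed at all. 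The paper bypasses every jet computation by the same global criterion: birationality of the anticanonical map, inherited by projection, already yields very ampleness via generation in degree one of the anticanonical algebra \cite[Th.~6.1]{Saint-Donat-1974}.
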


\begin{proof}
It follows from the proof of Theorems~\ref{theorem:Bs:hyp:trig-0} and~\ref{theorem:Bs:hyp:trig-1} that the linear system $|-K_{X^{(i)}}|$ is base point free (respectively, is very ample) if and only if
the image of the corresponding map $\Phi_{|-K_{X^{(i)}}|}$ is two-dimensional (respectively, $\Phi_{|-K_{X^{(i)}}|}$ is birational).
The assertions now follow from the fact that the maps $\tau_{i+1}\circ\varphi_{i+1}\circ\tau_i^{-1}:X^{(i)}\dashrightarrow X^{(i+1)}$ do not contract divisors.
\end{proof}

\begin{sremark}\label{sremark:Kal}
Suppose that in the construction \eqref{eq:constr} one has $\uprho(X)=1$. It was stated in \cite{Kaloghiros2011+err} and \cite{Kaloghiros2012} that $\uprho(X^{(i)})=1$ for all $i$.
Examples~\ref{example:constr} and~\ref{example:constr1} show that this is not true.
\end{sremark}

\section {Projective models}
The main results of this section are Theorems~\ref{theorem:Bs:hyp:trig-0},~\ref{theorem:Bs:hyp:trig-1}, and~\ref{theorem:Bs:hyp:trig-2}.
They slightly generalize the relevant facts for nonsingular Fano varieties {\cite[ch.~1, \S~6]{Iskovskikh-1980-Anticanonical}}, {\cite[\S~2.4]{IP99}}.
The ideas of proofs belong to V.A. Iskovskikh {\cite{Iskovskikh-1980-Anticanonical}} but since we are considering singular varieties, some changes have to be made.
On the other hand, there are much more general results for Fano varieties with canonical singularities
\cite{Jahnke-Radloff-2006}, \cite{Przhiyalkovskij-Cheltsov-Shramov-2005en}.
However, even in order to select varieties that meet our criteria from the lists \cite{Jahnke-Radloff-2006} and \cite{Przhiyalkovskij-Cheltsov-Shramov-2005en}, one
needs some effort.
We present relatively short proofs adapted to our situation.

\begin{theorem}\label{theorem:Bs:hyp:trig-0}
Let $X$ be a Fano threefold with terminal Gorenstein singularities.
Suppose that $\uprho(X)=1$ and $-K_X^3\ge 6$.
Then the linear system $|-K_X|$ is base point free.
\end{theorem}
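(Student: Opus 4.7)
The plan is to invoke Theorem~\xref{thm:bs} (Shin's theorem) and rule out both alternatives for a non-empty base locus under the hypothesis $(-K_X)^3\ge 6$. Case~\xref{thm:bs}\xref{thm:bs0} requires $(-K_X)^3=2$ and is immediately excluded. So we reduce to case~\xref{thm:bs}\xref{thm:bs1}: suppose $C:=\Bs|-K_X|$ is a smooth rational curve contained in the smooth locus of $X$, and let $S\in|-K_X|$ be a general element, which by Corollary~\xref{cor:bs} is a smooth K3 surface containing $C$.

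On $S$, set $L:=-K_X|_S$. Then $L$ is nef with $L^{2}=(-K_X)^{3}=2g-2\ge 6$, and $|L|$ inherits $C$ as its (scheme-theoretic) base locus. Applying Saint--Donat's structure theorem for K3 linear systems with base points (\cite{Saint-Donat-1974}) yields a decomposition $L\sim gE+C$, where $E$ is a smooth elliptic curve on $S$ with $E^{2}=0$ and $E\cdot C=1$; the coefficient $g$ is forced by $L^{2}=2g-2$. Intersecting on $S$ gives the decisive numerical identity
\[
-K_X\cdot C \;=\; L\cdot C \;=\; g\,(E\cdot C)+C^{2} \;=\; g-2.
\]

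Next I blow up $C$ to obtain $\sigma\colon\tilde X\to X$ with exceptional divisor $\EEE$; smoothness of $X$ along $C$ makes $\tilde X$ smooth near $\EEE$, and $-K_{\tilde X}=\sigma^{*}(-K_X)-\EEE$. The scheme-theoretic reducedness of $\Bs|-K_X|$ in Theorem~\xref{thm:bs}\xref{thm:bs1} implies that the proper transform $|-K_{\tilde X}|$ is base-point-free on $\tilde X$, so $-K_{\tilde X}$ is nef. Using adjunction on $C\simeq\PP^{1}$ to get $\deg N_{C/X}=g-4$, together with the blow-up formulas $\sigma^{*}(-K_X)\cdot\EEE^{2}=-(-K_X\cdot C)=2-g$ and $\EEE^{3}=-\deg N_{C/X}=4-g$, one computes
\[
(-K_{\tilde X})^{3} \;=\; (2g-2)+3(2-g)-(4-g) \;=\; 0,\qquad (-K_{\tilde X})^{2}\cdot\EEE \;=\; g>0.
\]
Thus $-K_{\tilde X}$ is nef and semi-ample but \emph{not} big, and defines a morphism $\pi:=\Phi_{|-K_{\tilde X}|}\colon\tilde X\to Y$ onto a surface $Y\subset\PP^{g+1}$. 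By adjunction the general fiber $F$ of $\pi$ is an elliptic curve with $-K_{\tilde X}\cdot F=0$ and $\EEE\cdot F=1$.

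The main obstacle is extracting the final contradiction from this elliptic fibration. Pushing the $\pi$-fibers down by $\sigma$, one obtains a $2$-parameter family $\{\sigma(F_{t})\}_{t\in Y}$ of elliptic curves on $X$ of anticanonical degree $-K_X\cdot\sigma(F_t)=\EEE\cdot F_t=1$, covering $X$ and each meeting $C$ in a single point. Since $\uprho(X)=1$, these elliptic curves all lie in the unique extremal ray of $\NE(X)$; however, by Mori's cone theorem together with bend-and-break, this extremal ray is generated by \emph{rational} curves of anticanonical degree $1$, whose Hilbert scheme is at most one-dimensional by a standard Riemann--Roch dimension count. The existence of a $2$-parameter family of \emph{elliptic} curves in the same numerical class is incompatible with this structure, yielding the desired contradiction and completing the proof that $|-K_X|$ is base-point free.
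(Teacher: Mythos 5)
Your first half coincides with the paper's own reduction: the exclusion of case \xref{thm:bs}\xref{thm:bs0}, the Saint--Donat decomposition (the paper writes it as $\bigl|-K_X|_S\bigr|=Z+g|C|$ with $Z\cdot C=1$, giving $-K_X\cdot Z=g-2$), the blowup of the base curve, the vanishing $(-K_{\tilde X})^3=0$, and the resulting elliptic fibration with the exceptional divisor as a section are all exactly as in the paper. The genuine gap is your final step. Riemann--Roch applied to the normal bundle of a rational curve $l$ with $-K_X\cdot l=1$ gives the \emph{lower} bound $\dim_{[l]}\operatorname{Hilb}\ge\chi(N_{l/X})=1$; it never gives an upper bound, since the family may be obstructed. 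Moreover, since $\uprho(X)=1$ and $X$ is Fano, $\NE(X)$ is a single ray and \emph{every} curve on $X$, elliptic or rational, lies on it, so there is nothing intrinsically ``incompatible'' about a $2$-parameter family of elliptic curves of anticanonical degree $1$: at an elliptic fiber $F$ one has $\chi(N_{F/X})=\deg N_{F/X}=-K_X\cdot F=1$, and an actual dimension $2$ only means $h^1(N_{F/X})>0$. Bend-and-break also cannot break a curve of $-K$-degree $1$. A telling symptom: your argument never uses the hypothesis $-K_X^3\ge 6$ beyond excluding case \xref{thm:bs}\xref{thm:bs0} (Saint--Donat needs only $g\ge3$), whereas the inequality $g\ge4$ is exactly what powers the paper's contradiction.

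What the paper does after the common setup is a divisor-class argument, not a curve count. It identifies the anticanonical image $W$ as a surface of minimal degree $g$ in $\PP^{g+1}$, uses $\uprho(X)=1$ to exclude $W\simeq\FF_e$ and the ruling of the exceptional divisor (whose fibers map to lines on $W$) to exclude the Veronese surface, so $W$ is a cone over a rational normal curve. Pulling back a generator of the cone yields a decomposition of Weil divisor classes $-K_X=gF+D$. If $D$ were Cartier, so would be $F$ by Corollary~\xref{cor:Weil-Cartier}, forcing $\iota(X)\ge g>1$ against Theorem~\xref{thm:bs}; hence $D$ is a non-Cartier prime divisor, and the possible failure of $\QQ$-factoriality of $X$ enters essentially. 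Passing to a small $\QQ$-factorialization $\hat X\to X$ chosen so that $-\hat F$ is relatively nef, one finds an $\hat F$-positive, $K_{\hat X}$-negative extremal ray with minimal curve $l$, and the classification of Theorem~\xref{classification:extremal-rays} caps $-K_{\hat X}\cdot l\le 3$, while $-K_{\hat X}\cdot l\ge g\hat F\cdot l\ge g\ge4$ (with a parallel contradiction via $-(K_{\hat X}+\hat D)\cdot l\ge g$ when $\hat D\cdot l<0$). Your proposal has no substitute for this mechanism, so as written the proof does not close.
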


\begin{proof}
Suppose that $\Bs|-K_X|\neq\varnothing$. Apply Theorem~\ref{thm:bs}. Since $-K_X^3\ge 6$,
we have the case~\ref{thm:bs}\ref{thm:bs1}. Let $g:=\g(X)$ and let $Z:=\Bs|-K_X|$ (a smooth rational curve).
According to Corollary~\ref{cor:bs} a general element $S\in|-K_X|$ is a smooth K3 surface.
By the Kawamata-Viehweg vanishing theorem $H^1 (X,\OOO_X)=0$. Hence, the restriction map
\[
H^1 (X,\OOO_X (-K_X))\longrightarrow H^1 (S,\OOO_S (-K_X))
\]
it is surjective and therefore $\Bs\bigl|-K_X|_S\bigr|=Z$.
According to \cite[Prop. 8.1]{Saint-Donat-1974} we have
\begin{equation}
\label{base-point-index=1}
\bigl|-K_X|_S\bigr|=Z+g|C|,\qquad g\ge 3,
\end{equation}
where $Z$ is a $(-2)$-curve on $S$ and
$|C|$ is a base point free elliptic pencil on $S$ such that
$Z\cdot C=1$. In particular,
\begin{equation*}
-K_X\cdot C=1\quad\text {and}\quad -K_X\cdot Z=g-2.
\end{equation*}
Consider the map $\Phi=\Phi_{|-K_X|}:X\dashrightarrow\PP^{g+1}$ defined by the linear system $|-K_X|$.
Its restriction $\Phi_S$ to the surface $S$ is the morphism
given by the linear system $|gC|$.
This linear system is composed of the elliptic pencil $|C|$ and one can easily check that 
it maps $S$ onto a rational curve $\Lambda\subset\PP^{g}$.
Moreover, $\Lambda=\Phi_S (Z)$ and so $\Lambda\subset\PP^{g}$
is a rational normal curve of degree $g$.
But this curve is nothing but a hyperplane section of $W=\Phi(S)\subset \PP^{g+1}$ corresponding to the divisor $S$.
Thus, $\dim W=2$ and $\deg W=g$. Therefore, the image of the map $\Phi_{|-K_X|}:X\dashrightarrow\PP^{g+1}$ is a surface
minimal degree
\begin{equation*}
W=W_g\subset\PP^{g+1}.
\end{equation*}
We obtain that $W$ is either a rational geometrically ruled surface $\FF_e$, or a cone over a rational normal curve of degree $g$, 
or a Veronese surface \cite[Th. 1.10]{Saint-Donat-1974}.
Indeterminacies of the map $\Phi$ are resolved by blowing up of the base locus $\Bs|-K_X|$, i.e. the curve $Z\simeq\PP^1$. We obtain the following diagram
\begin{equation*}
\xymatrix@R=10pt{
&\tilde X\ar[dl]_{\sigma}\ar[]!<0pt,0pt>;[dr]!<-40pt,5pt>^{\delta}&
\\
X\ar@{-->}[rr]^(.4){\Phi}&&W=W_g\subset\PP^{g+1}
} 
\end{equation*}
where $\sigma:\tilde {X}\to X$ is the blowup of
the curve $Z$ and the morphism $\delta$ is given by the anticanonical linear system $|-K_{\tilde X}|$. 
It follows from \eqref{base-point-index=1} that the generic fiber of the morphism $\delta$ is an elliptic curve and the exceptional divisor $E:=\sigma^{-1}(Z)$ is a section of this fibration. If $W$ is a cone, then the fiber over its vertex can be two-dimensional.

Since $1=\uprho(X)=\uprho(\tilde X)-1>\uprho(W)-1$, we have $\uprho(W)=1$.
This implies that $W\not\simeq\FF_e$.
Since $-K_{\tilde X}=\sigma^*(-K_X) -E $, the fibers of the ruled surface $\sigma_E:E\to Z$ are mapped to lines on $W$.
Therefore, $W$ cannot be a Veronese surface.
Thus, $W=W_g\subset\PP^{g+1}$ is a cone over a rational normal curve of degree $g$.
Let $L\subset W$ be a generator of the cone
and let $w_0\in W$ be its vertex.
Then $gL$ is a hyperplane section of the surface $W$ and therefore
$-K_{\tilde X}\sim\delta^*(gL)$. 
Clearly, $\tilde F:=\overline {\delta^{-1}(L\setminus\{w_0\})}$ is an irreducible surface and we can write 
\[
-K_{\tilde X}\sim\delta^*(gL)\sim g\tilde F+\tilde D, 
\]
where $\tilde D$ is an effective divisor with support in $\delta^{-1}(w_0)$. We put $F:=\sigma (\tilde F)$ and $D:=\sigma (\tilde D)$. Since the divisor $\tilde F$ is movable,
$F\neq 0$ and so
\begin{equation}
\label{eq:bs:KFD}
-K_X=-\sigma_* K_{\tilde X}=g F+D.
\end{equation}
If the divisor $D$ is Cartier, then so the divisor $F$ is (see Corollary~\ref{cor:Weil-Cartier}).
But then $\iota(X)\ge g>1$. This contradicts the assertion of Theorem~\ref{thm:bs}.
Therefore, we may assume that $D$ is not a Cartier divisor (in particular, $D\neq 0$).
Comparing \eqref{base-point-index=1} and \eqref{eq:bs:KFD} we see that $D\cap S=Z$.
Since $\uprho(X)=1$, the divisor $D$ is reduced and irreducible. 

Let $\tau:\hat{X}\to X$ be a small $\QQ$-factorialization \cite[Corollary 4.5]{Kawamata-1988-crep}.
Then $\hat{X}$ is a weak Fano threefold with factorial terminal singularities and
\begin{equation}
\label{eq:bs:KFDhat}
-K_{\hat{X}}=g\hat{F}+\hat{D},
\end{equation}
where $\hat{F}$ and $\hat{D}$ are proper transforms on $\hat{X}$ of divisors $F$ and $D$, respectively.
We can choose $\tau$ so that the divisor $-\hat{F}$ is relatively nef.
Then the divisor $\hat{D}$ is also relatively nef.
There is a $\hat{F}$-positive extremal ray on $\hat{X}$.
By our choice of $\tau$, it must be $K_{\hat{X}}$-negative.
Let $\varphi:\hat{X}\to Z$ be a contraction of this ray and let $ l $ be the corresponding minimal rational curve.

Suppose that $\hat D\cdot l <0$. Then the morphism $\varphi$ is birational and $\hat D$ is its exceptional divisor.
By~\eqref{eq:bs:KFDhat} 
\[
-(K_{\hat{X}}+\hat{D})\cdot l=g\hat{F}\cdot l\ge g\ge 4.
\]
This is impossible by Theorem~\ref{classification:extremal-rays}.

Therefore, $\hat{D}\cdot l\ge 0$.
Then, again by Theorem~\ref{classification:extremal-rays},
\[
3\ge -K_{\hat{X}}\cdot l=g\hat{F}\cdot l+\hat{D}\cdot l\ge g\ge 4.
\]
Again we have a contradiction.
Theorem~\ref{theorem:Bs:hyp:trig-0} is proved.
\end{proof}

Recall that a Fano threefold $X$ is called \emph{hyperelliptic} if its anticanonical linear system $|-K_X|$ defines a double cover onto its image.
A Fano threefold $X$ is called \emph{trigonal} if its anticanonical linear system $|-K_X|$ is very ample and defines an embedding to the projective space so that
its image is not an intersection of quadrics.

\begin{theorem}
\label{theorem:Bs:hyp:trig-1}
Let $X$ be a Fano threefold with terminal Gorenstein singularities and $\uprho(X)=1$.
We put $g:=\g(X)$.
If $g\ge 4$ and $\iota(X)=1$, then the linear system $|-K_X|$ is very ample and defines an embedding to $\PP^{g+1}$ except for the case described in Example~\xref{example:hyp:g=4} below.
\end{theorem}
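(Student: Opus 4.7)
The plan is to apply Theorem~\ref{theorem:Bs:hyp:trig-0}, which is applicable because $g \ge 4$ gives $(-K_X)^3 = 2g-2 \ge 6$. Thus $|-K_X|$ is base point free and defines a morphism $\Phi := \Phi_{|-K_X|}\colon X \to \PP^{g+1}$; let $W := \Phi(X)$. I would first verify $\dim W = 3$: otherwise, restricting to a general anticanonical K3 surface $S$ (which exists by Theorem~\ref{thm:ge}) and invoking the Saint-Donat classification of linear systems on K3 surfaces, $\dim W \le 2$ would force $|-K_X|_S$ to be composed with an elliptic pencil, leading to a decomposition $-K_X \sim g F + D$ as in the proof of Theorem~\ref{theorem:Bs:hyp:trig-0} and hence $\iota(X) \ge g \ge 4$, contradicting $\iota(X) = 1$.

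Next, I would analyze the degree $d := \deg(\Phi\colon X \to W)$ by restricting to a general $S \in |-K_X|$. Kawamata--Viehweg vanishing gives a surjection $H^0(X, -K_X) \twoheadrightarrow H^0(S, -K_X|_S)$, so $\Phi|_S$ is the map defined by the complete linear system $|L| := |{-K_S}|$, with $L^2 = 2g-2$ and $\dim |L| = g$. Saint-Donat's theorem~\cite{Saint-Donat-1974} then gives two alternatives: either $|L|$ is hyperelliptic, so that $\Phi|_S$ is a double cover onto a surface of minimal degree $g-1$ in $\PP^g$, or $\Phi|_S$ is birational onto a projectively normal surface of degree $2g-2$ in $\PP^g$, in fact restricting to a closed immersion away from the Du Val $(-2)$-configurations.

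In the hyperelliptic case, $\Phi$ itself is of degree two onto a threefold $W \subset \PP^{g+1}$ of minimal degree $g-1$, so $W$ is a rational scroll, the cone over the Veronese surface, or a cone over a rational normal scroll. Combined with the constraints that the double cover $X \to W$ have at worst terminal Gorenstein singularities and that $\iota(X) = \uprho(X) = 1$, a case-by-case analysis of the admissible branch divisors should force $g = 4$ and reproduce the geometry of Example~\ref{example:hyp:g=4}. In the birational case, I would upgrade the fact that $\Phi|_S$ is an embedding to $\Phi$ being an embedding of $X$: tangent vectors at any point $p$ are separated by passing to a general $S \in |-K_X|$ through $p$; distinct points $p_1, p_2$ lying on a common general $S$ are separated by the embedding $\Phi|_S$; and points not lying on a common anticanonical divisor are separated directly by base-point-freeness.

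The principal obstacle is the hyperelliptic branch of the argument: one must verify that the terminal Gorenstein constraint on $X$, combined with $\iota(X) = \uprho(X) = 1$, excludes all hyperelliptic possibilities except $g = 4$. This will require a detailed computation of admissible branch divisors on each of the candidate varieties $W$ of minimal degree in $\PP^{g+1}$ and a check that the resulting double covers with $g \ge 5$ either violate terminality or produce Picard number greater than one.
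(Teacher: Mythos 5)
Your setup matches the paper's: Theorem~\xref{theorem:Bs:hyp:trig-0} gives a morphism $\Phi$, restriction to a general K3 section $S$ plus Kawamata--Viehweg vanishing reduces everything to Saint-Donat, and the dichotomy birational versus degree~$2$ onto a threefold of minimal degree $g-1$ is exactly right. But the heart of the theorem is the hyperelliptic branch, and there you have only written that a case analysis of branch divisors ``should force $g=4$'' --- you yourself flag it as the principal obstacle and do not carry it out. The paper's mechanism is concrete and twofold. First, the cones that are \emph{not} cones over a ruled surface with point vertex are killed by an index argument, not a branch-divisor computation: if $\Sing(W)$ is a line, $\Cl(W)$ is generated by a plane $\Pi$ with $H_W\sim(g-1)\Pi$, so pulling back along the finite map $\Phi$ gives $-K_X\sim(g-1)\Phi^*\Pi$ and hence $\iota(X)\ge g-1\ge 2$ via Corollary~\xref{cor:Weil-Cartier} (Weil implies Cartier on terminal Gorenstein threefolds); the cone over the Veronese is excluded the same way, and the smooth scroll by $\uprho(W)=1$. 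Second, for $W$ a cone with vertex $o$ over a ruled surface, the decisive input is terminality of $X$: since $o$ is non-Gorenstein, $P=\Phi^{-1}(o)$ is a single point, the normalized base change $\tilde X\to\tilde W=\FF(0,n_2,n_3)$ is small, so $\tilde X$ is smooth at the generic point of the fiber over the vertex section $\tilde C$, forcing the Hurwitz branch divisor $\tilde B\sim 4M+2(2-n_2-n_3)F$ to be \emph{smooth} at the generic point of $\tilde C$. Feeding this into Iskovskikh's multiplicity estimate \eqref{eq:qe:scrolls} with $q=2$, $c=n_1=0$ yields $n_2=1$, $n_3=2$, i.e.\ $\tilde W=\FF(0,1,2)$ and $g=4$, which is precisely Example~\xref{example:hyp:g=4}. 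Without this vanishing-order inequality on scrolls (or an equivalent), your sketch does not exclude $g\ge5$, so the statement is not proved.

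A secondary issue is your birational case. You cannot place two \emph{prescribed} points on a ``common general $S$'': the relevant members form a codimension-$\le2$ subsystem, and for its general member the very ampleness of $-K_X|_S$ is not yet known --- it is essentially what you are trying to prove, so the separation argument risks circularity; moreover, at a singular (cDV) point of $X$ the Zariski tangent space is $4$-dimensional while any anticanonical section through it is Du Val of embedding dimension $3$, so a single $S$ cannot separate all tangent vectors there. The paper avoids all of this: Kawamata--Viehweg gives surjectivity of $\bigoplus_n H^0(X,\OOO_X(-nK_X))\to\bigoplus_n H^0(S,\OOO_S(-nK_X))$ in every degree, Saint-Donat's Theorem~6.1 says the K3 algebra is generated in degree~$1$, and Iskovskikh's Lemma~2.9 lifts this to $X$, so $-K_X$ is very ample at once. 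Your pointwise approach could perhaps be repaired, but as written it is a gap, and the graded-algebra route is both shorter and uniform over the singular points.
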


\begin{sexample}
\label{example:hyp:g=4}
Consider the cone $W=W_3\subset\PP^5$ over a smooth surface $V=V_3\subset\PP^4$, $V\simeq\FF_1$ of degree $3$ with vertex $o\in W$. 
Fix two planes $\Pi_1$, $\Pi_2\subset W$ such that $\Pi_1\cap \Pi_2=\{o\}$.
Consider the hypersurface $Z=Z_4\subset\PP^5$ of degree $4$ passing through $\Pi_1$ and $\Pi_2$.
Let $B\subset W\cap Z$ be the residual subvariety to $\Pi_1\cup\Pi_2$.
Consider a double cover $X\to W$ branched over $B$. Then $X$ is a hyperelliptic Fano threefold of genus 4 and $X\to W=W_3\subset\PP^5$ is its anticanonical map,
see {\cite[2.7.3]{Takeuchi-2009}} and \cite[Theorem~1.6, case~$H_5$]{Przhiyalkovskij-Cheltsov-Shramov-2005en}.
For a general choice of the hypersurface $Z=Z_4\subset\PP^5$, the singular locus of the variety $X$ consists of a single ordinary double point, $\rk \Cl (X)=2$, and $\uprho(X)=1$. In this case, there is a Sarkisov link
\begin{equation*}
\vcenter {
\xymatrix@R=11pt {
&\hat{X}\ar[dr]^\tau\ar@ {-->}[rr]^{\chi}\ar[dl]_f &&\hat{X}'\ar[dr]^{f'}\ar[dl]_{\tau'}
\\
\PP^1 && X && Y'
}}
\end{equation*}
where $\tau$ and $\tau'$ are two small resolutions, $f$ is a del Pezzo fibration of degree 2, and $f'$ is the blowup of a factorial ordinary double point on a del Pezzo threefold $Y'$ degree 1 \cite[(2.7.3)]{Takeuchi-2009}, \cite[Lemma~3.4]{Grinenko2000}.
A general variety of this type is not rational \cite[Proposition~5.6]{Przhiyalkovskij-Cheltsov-Shramov-2005en}.
Note that this case is erroneously omitted in \cite{Jahnke-Peternell-Radloff-II}.
\end{sexample}

\begin{proof}[Proof of Theorem~\xref{theorem:Bs:hyp:trig-1}]
According to Theorem~\ref{theorem:Bs:hyp:trig-0} the linear system $|-K_X|$ defines a morphism
\[
\Phi=\Phi_{|-K_X|}:X\to W\subset\PP^{g+1},
\]
where $W:=\Phi(X)$.
Suppose that the morphism $\Phi$ is birational but not an embedding.
Let $S\in|-K_X|$ be a general element. Then the restriction of $\Phi$ to $S$ is birational.
By the Kawamata-Viehweg vanishing theorem of the restriction map of graded algebras
\begin{equation*}
\bigoplus_{n\ge 0} H^0 (X,\OOO_X (-nK_X)))\longrightarrow\bigoplus_{n\ge 0} H^0 (S,\OOO_S (-nK_X)))
\end{equation*}
is surjective on a homogeneous component of any degree.
According to the classical result on K3 surfaces \cite[Th. 6.1]{Saint-Donat-1974} the algebra on the right-hand side is generated by its component of degree 1.
It follows that the algebra on the left-hand side is also generated by its component of degree 1 (see \cite[Lemma 2.9]{Iskovskih1977a}).
In this case, $-K_X$ is a very ample divisor.

Suppose now that the morphism $\Phi$ is not birational to its image $W$.
We have
\begin{equation*}
2g-2=-K_X^3=(\deg\Phi)\cdot (\deg W).
\end{equation*}
Since $\deg\Phi\ge2$ and 
\[
\deg W\ge\operatorname {codim}(W)+1=g-1
\]
(according to the classical Enriques theorem, see \cite[\S 1]{Saint-Donat-1974}), we have the only possibility: $\deg\Phi=2$ and $\deg W=g-1$.
Thus, $W=W_{g-1}\subset\PP^{g+1}$ is a variety of minimal degree \cite[Th. 1.11]{Saint-Donat-1974}.
Since $\uprho(X)=1$, we have $\uprho(W)=1$. Since $g\ge 4$, the variety $W$ must be singular \cite[Th. 1.11]{Saint-Donat-1974}.

Consider the case $\dim\Sing (W)=1$. 
Then $\Sing (W)=L$ is a line and $W$ is a cone with vertex $L$ over a rational normal curve $C=C_{g-1}\subset\PP^{g-1}$ (see \cite[Th. 1.11]{Saint-Donat-1974}). 
It is easy to see that in this case the group $\Cl(W)$ is generated by the class of the plane $\Pi\subset W$. 
Since $\Phi$ is a finite morphism, there is a well-defined homomorphism $\Phi^*:\Cl(W)\to\Cl(X)$ which is an embedding. 
Moreover, $-K_X=\Phi^*H_W$, where $H_W$ is the class of the hyperplane section of $W$. 
A count of degrees gives us $H_W\sim (g-1)\Pi$. 
This means that $-K_X\sim (g-1)\Phi^*\Pi$, i.e. $\iota(X)\ge g-1\ge 2$ (see Corollary~\ref{cor:Weil-Cartier}). 
This contradicts our assumptions.

Finally, consider the case $\dim\Sing (W)=0$. Then $\Sing (W)=\{o\}$ is a point and $W$ is a cone with vertex $o$ over smooth
surface $V=V_{g-1}\subset\PP^{g}$ which is either a rational ruled surface or a Veronese surface
(again according to \cite[Th. 1.11]{Saint-Donat-1974}). In the second case, as above, we get $\iota(X)>1$. Consider the case where $V=V_{g-1}\subset\PP^{g}$ is a smooth rational ruled surface. In this case, the vertex of the cone $o\in W$ has a small resolution $\tilde W\to W$, where $\tilde W\simeq\FF (n_1, n_2, n_3)$, 
\[
n_3\ge n_2>n_1=0,\qquad g=\sum n_i+1\ge 4,
\]
and the morphism $\FF (0, n_2, n_3)\to W\subset\PP^{g-1}$ is given by the tautological linear system $|M|$.
Moreover, the minimal section $\tilde C\subset\tilde W$ of the projection $\FF (0, n_2, n_3)\to\PP^1$ is contracted to the vertex of the cone.
Since $o\in W$ is a non-Gorenstein singularity, the morphism $\Phi$ is ramified over $o$, i.e. $P:=\Phi^{-1}(o)$ is one point.
Consider the base change
\[
\xymatrix@R=7pt {
\tilde X\ar[r]^{\tilde\Phi}\ar[d] &\tilde W\ar[d]
\\
X\ar[r]^{\Phi} & W
}
\]
where $\tilde X$ is the normalization of $X\times_W\tilde W$. Here $\tilde X\to X$ is a small birational morphism.
Therefore, the variety $\tilde X$ has only terminal Gorenstein singularities. In particular, $\tilde X$ is smooth at the generic point of the fiber $\tilde\Phi^{-1}(\tilde C)$ over $\Phi^{-1}(o)$.
Let $B\subset W$ (resp. $\tilde B\subset\tilde W$) be the branch divisor of the morphism $\Phi$ (resp. $\tilde\Phi$). It is clear that $B$ is the image of $\tilde B$.
According to the Hurwitz formula we can write
\[
K_{\tilde X}=\Phi^*\left (K_{\tilde W}+\textstyle\frac12\tilde B\right)=-\Phi^*M,\quad\tilde B\sim 4M+2 (2-a_1-a_2) F,
\]
where $F$ is the class of a fiber of the projection $\FF (0, n_2, n_3)\to \PP^{1}$.
Since $\tilde X$ is smooth at a generic point of the curve $\tilde\Phi^{-1}(\tilde C)$, the branch divisor $\tilde B$ must be smooth
at a generic point of $\tilde C$.
Apply the following estimate of the multiplicity:
\begin{sproposition}[{\cite[Ch.~1, Prop.~3.13]{Iskovskikh-1980-Anticanonical}}]
Let 
\[
\EEE:=\oplus_{i=1}^m\OOO_{\PP^1}(n_i),\qquad 0\le n_1\le\cdots\le n_m,\quad n_1\neq n_m
\]
For each integer
$c$ put
\[
\EEE_c:=\oplus_{n_i\le c}\OOO_{\PP^1}(n_i).
\]
We identify $\PP_{\PP^1}(\EEE_c)$ with the subvariety in $\PP_{\PP^1}(\EEE)$, where the embedding defined by the projection
$\EEE\to\EEE_c$. 
Let $M$ be the tautological divisor on $\PP_{\PP^1}(\EEE)$ and let $F$ be the fiber of the projection $\PP_{\PP^1}(\EEE)\to\PP^1$. 
Then every section
\[
s\in H^0 (\PP_{\PP^1}(\EEE),\OOO (aM+bF))
\]
has a zero of order $\ge q$ along $\PP_{\PP^1}(\EEE_c)$ if and only if the following inequality is satisfied
\begin{equation}\label{eq:qe:scrolls}
ac+b+(n_m-c) (q-1) <0.
\end{equation}
\end{sproposition}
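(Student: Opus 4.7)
The plan is to reduce the geometric vanishing condition to an explicit combinatorial statement about monomials by pushing forward to $\PP^1$ and then carrying out an elementary linear optimization.

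First, I would use the projection $\pi\colon \PP_{\PP^1}(\EEE)\to \PP^1$ to identify
\[
H^0(\PP_{\PP^1}(\EEE), \OOO(aM+bF)) = H^0(\PP^1, S^a\EEE\otimes \OOO_{\PP^1}(b)).
\]
Using the splitting $\EEE=\bigoplus_{i=1}^m \OOO_{\PP^1}(n_i)$, the symmetric power decomposes as $S^a\EEE=\bigoplus_{J}\OOO_{\PP^1}(J\cdot n)$, indexed by multi-indices $J=(j_1,\dots,j_m)$ with $\sum j_i=a$, where $J\cdot n:=\sum j_i n_i$; each summand corresponds to the monomial $z^J=z_1^{j_1}\cdots z_m^{j_m}$ in the relative homogeneous coordinates. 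Every global section then has the form $s=\sum_J p_J(t)\,z^J$ with $p_J\in H^0(\PP^1,\OOO(J\cdot n+b))$, so $p_J$ is forced to vanish precisely when $J\cdot n+b<0$.

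Second, I would identify the ideal of the subscroll. Because $\PP_{\PP^1}(\EEE_c)\hookrightarrow \PP_{\PP^1}(\EEE)$ comes from the quotient $\EEE\to \EEE_c$, the ideal sheaf of $\PP_{\PP^1}(\EEE_c)$ is generated by the relative coordinates $z_i$ with $n_i>c$. Hence $s$ vanishes to order $\ge q$ along $\PP_{\PP^1}(\EEE_c)$ iff every monomial $z^J$ with a nonzero coefficient satisfies $\sum_{n_i>c} j_i\ge q$. Requiring \emph{every} global section to vanish to order $\ge q$ is therefore equivalent to the statement: for every $J$ with $\sum j_i=a$ and $\sum_{n_i>c} j_i<q$, one has $J\cdot n+b<0$.

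Finally, this is a one-parameter linear maximization. Writing $k:=\sum_{n_i>c} j_i\le q-1$, one maximizes by putting these $k$ units on the index realizing $n_m$ and the remaining $a-k$ units on an index with $n_i$ as large as possible subject to $n_i\le c$; the supremum over admissible $J$ is then $ac+(n_m-c)(q-1)$, attained at $k=q-1$. The "every section vanishes" condition thus becomes $ac+(n_m-c)(q-1)+b<0$, which is \eqref{eq:qe:scrolls}.

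The main obstacle, beyond the bookkeeping, is the second step: one must verify cleanly that the subscroll $\PP_{\PP^1}(\EEE_c)$ is scheme-theoretically cut out by the $z_i$ with $n_i>c$ (so that "order of vanishing" really is the transverse degree of each monomial), and one must handle the case where no $n_i$ equals $c$ (in which case the supremum $ac+(n_m-c)(q-1)$ is not attained by an integral $J$ but the inequality in \eqref{eq:qe:scrolls} still gives the correct sharp criterion, since the relevant $J\cdot n$ values are integers).
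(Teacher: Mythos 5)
The paper gives no proof of this proposition at all --- it is quoted with a citation to Iskovskikh \cite[Ch.~1, Prop.~3.13]{Iskovskikh-1980-Anticanonical} --- so your argument can only be measured against the standard one, and it is exactly that: push forward to identify $H^0(\PP_{\PP^1}(\EEE),\OOO(aM+bF))$ with $H^0(\PP^1, S^a\EEE\otimes\OOO_{\PP^1}(b))$, decompose $S^a\EEE$ into line bundles indexed by monomials $z^J$, note that the ideal of $\PP_{\PP^1}(\EEE_c)$ is generated by the fibre coordinates $z_i$ with $n_i>c$, so that the order of vanishing of $s=\sum_J p_J z^J$ along the subscroll equals $\min\{\sum_{n_i>c} j_i : p_J\neq 0\}$ (no cancellation can occur, since distinct $J$ give linearly independent monomials and the relevant $z_i$ are transverse coordinates at the generic point of the smooth subscroll), and then optimize linearly. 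This is correct and complete in the regime in which the proposition is actually used in the paper --- both applications, in the proofs of Theorems~\xref{theorem:Bs:hyp:trig-1} and~\xref{theorem:Bs:hyp:trig-2}, take $q=2$ and $c=n_1=0$ --- and more generally whenever $c=\max\{n_i : n_i\le c\}$. Moreover your ``if'' direction (inequality implies vanishing), which is the only direction the paper invokes, in contrapositive form, is valid for every integer $c<n_m$, since $kn_m+(a-k)c'\le ac+(q-1)(n_m-c)$ for $k\le q-1$ and $c':=\max\{n_i: n_i\le c\}\le c$.

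Your final parenthetical, however, is wrong: when no $n_i$ equals $c$, the inequality \eqref{eq:qe:scrolls} is \emph{not} the sharp criterion, and integrality does not rescue it. The true supremum of $J\cdot n$ over admissible $J$ is $(q-1)n_m+(a-q+1)c'$ with $c'$ as above, and the gap $(a-q+1)(c-c')$ can be arbitrarily large, far exceeding any slack coming from $J\cdot n+b\in\ZZ$. Concretely, take $\EEE=\OOO_{\PP^1}\oplus\OOO_{\PP^1}(5)$, $c=3$, $a=1$, $b=-1$, $q=1$: every section of $\OOO(M-F)$ has the form $p(t)z_2$ (the coefficient of $z_1$ lies in $H^0(\PP^1,\OOO_{\PP^1}(-1))=0$), hence vanishes along $\PP_{\PP^1}(\EEE_3)=\{z_2=0\}$, yet $ac+b+(n_m-c)(q-1)=2\ge 0$. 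For the same reason the ``only if'' direction also degenerates when $a<q-1$, where the construction of your witness $J$ (placing $q-1$ units on $n_m$) is impossible and all sections may vanish vacuously. So the correct sharp statement should either assume $c\in\{n_1,\dots,n_m\}$ (and $a\ge q-1$) or replace $c$ by $c'$ in \eqref{eq:qe:scrolls}; since the paper only ever applies the proposition with $c=n_1=0$ and $q=2\le a+1$, this defect affects neither the paper's arguments nor the substance of your proof.
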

In our case, for $q=2$ and $c=n_1=0$, this gives us
\[
4n_1+4-2\sum n_i+n_3-n_1\ge 0.
\]
We obtain the only possibility: $n_2=1$, $n_3=2$, i.e. $\tilde W=\FF (0,1,2)$ and
$\tilde B\sim 4M-2F$.
Theorem~\ref{theorem:Bs:hyp:trig-1} is proved.
\end{proof}

\begin{theorem}
\label{theorem:Bs:hyp:trig-2}
Let $X$ be a Fano threefold with terminal Gorenstein singularities and $\uprho(X)=1$.
We put $g:=\g(X)$.
If $g\ge 5$ and $\iota(X)=1$, then the image $X=X_{2g-2}\subset\PP^{g+1}$ of the anticanonical embedding is an intersection of quadrics except for two cases described in Examples
\xref{example:trigonal:g=5} and~\xref{example:trigonal:g=6} below.
\end{theorem}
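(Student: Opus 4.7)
The plan is to reduce the statement to Saint-Donat's theorem on projective models of K3 surfaces by taking a general anticanonical section. Combining Theorems~\ref{theorem:Bs:hyp:trig-0} and~\ref{theorem:Bs:hyp:trig-1}, for $g\geq 5$ the divisor $-K_X$ is very ample (the exceptional family of Theorem~\ref{theorem:Bs:hyp:trig-1} occurs only for $g=4$), so $X\hookrightarrow\PP^{g+1}$ is embedded as a variety of degree $2g-2$. A general element $S\in|-K_X|$ is a smooth K3 surface by Theorem~\ref{thm:ge} together with Corollary~\ref{cor:bs}, and the restriction $H:=(-K_X)|_S$ realizes $S$ as a polarized K3 of genus $g$ embedded as the hyperplane section $S_{2g-2}\subset\PP^g$.

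I would then transfer the quadric-generation question from $X$ to $S$. Kawamata--Viehweg vanishing gives $H^i(X,\OOO_X(-nK_X))=0$ for $i>0$ and $n\geq 0$, so the restriction sequence
\[
0\to\OOO_X(-(n-1)K_X)\to\OOO_X(-nK_X)\to\OOO_S(nH)\to 0
\]
is surjective on global sections. Together with the classical projective normality of polarized K3 surfaces \cite{Saint-Donat-1974}, an easy induction shows that $X\subset\PP^{g+1}$ is projectively normal and arithmetically Cohen--Macaulay. A standard Nakayama-type lifting argument (generating the homogeneous ideal by degrees, using a linear form $\ell$ vanishing on $S$) then yields: $X$ is cut out by quadrics in $\PP^{g+1}$ if and only if the hyperplane section $S$ is cut out by quadrics in $\PP^g$.

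At this point Saint-Donat's theorem \cite[Th.~7.2]{Saint-Donat-1974} applies: since $H$ is very ample, $(S,H)$ is not hyperelliptic, and $S\subset\PP^g$ is an intersection of quadrics unless either (a) $S$ contains an elliptic pencil $|E|$ with $E\cdot H=3$ (the trigonal K3 case), or (b) $H\sim 2B$ for an irreducible curve $B$ with $B^2=2$ (the K3 double cover of $\PP^2$, forcing $g=5$). In the non-exceptional case the conclusion follows at once; what remains is to lift each exceptional configuration back to $X$ and to match it with Examples~\xref{example:trigonal:g=5} and~\xref{example:trigonal:g=6}.

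The main obstacle lies in this last step. Starting from an elliptic pencil $|E|$ on $S$, the spans of its members sweep out a rational normal scroll $\bar W\subset\PP^g$ containing $S$; one must show, via further cohomological vanishing and by passing to the small $\QQ$-factorialization of Theorem~\ref{theorem:constr}, that $\bar W$ is the hyperplane section of a four-dimensional rational scroll $W\subset\PP^{g+1}$ containing $X$. Writing $X$ as a divisor on $W$, computing its numerical class, and imposing the constraints $(-K_X)^3=2g-2$, $\iota(X)=1$, $\uprho(X)=1$ will then narrow $g$ to exactly the two values $5$ and $6$, producing the two families of Examples~\xref{example:trigonal:g=5} and~\xref{example:trigonal:g=6}; case (b) is either absorbed into the $g=5$ family by the same lift or excluded by $\uprho(X)=1$. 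The bulk of the argument is therefore not the reduction to Saint-Donat but this global identification of the lifted scroll.
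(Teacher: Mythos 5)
Your front end is sound and essentially coincides with the paper's: reduce to a general smooth K3 section $S\in|-K_X|$ and transfer the quadric-generation question (the paper does this via Saint-Donat's extension lemma \cite[Lemma 7.9]{Saint-Donat-1974} -- every quadric through $S$ in $\PP^g$ extends to a quadric through $X$ in $\PP^{g+1}$ -- which is the same mechanism as your projective-normality lifting). One inaccuracy there: the exception you label (b), $H\sim 2B$ with $B^2=2$, is the exception to \emph{birationality} of $\varphi_H$ (it makes the map $2:1$ onto a Veronese surface) and is already excluded by very ampleness of $-K_X$; the genuine second exception in Saint-Donat's quadrics theorem is $H\sim 2B+\Gamma$ with $B^2=2$, $\Gamma$ a $(-2)$-curve, $B\cdot\Gamma=1$ (genus $6$, the plane-quintic analogue). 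That case is eliminated not by ``$\uprho(X)=1$'' in any direct way, but because the quadric hull of $S$ would then be a cone over the Veronese surface, forcing the quadric hull $W$ of $X$ to have a one-dimensional vertex line, incompatible with $\Sing(W)\subset\Sing(X)$ being finite ($X$ has terminal, hence isolated, singularities).

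The genuine gap is in the step you defer: the assertion that writing $X$ as a divisor on the lifted scroll $W$ and imposing $(-K_X)^3=2g-2$, $\iota(X)=1$, $\uprho(X)=1$ ``narrows $g$ to exactly $5$ and $6$'' is false as stated. The identity $(-K_X)^3=2g-2$ is automatic for every $g$ and yields nothing, and purely numerical constraints on the class of $X$ in $W$ do not bound $g$ at all. The paper's bound requires three non-numerical inputs: (i) $W$ must be \emph{singular} (a smooth $W$ is a $\PP^3$-bundle over $\PP^1$, so $X$ would dominate a curve, contradicting $\uprho(X)=1$), hence a cone with zero-dimensional vertex $w_0$ over a smooth scroll, with $w_0\in\Sing X$; (ii) on the resolution $\hat W=\PP_{\PP^1}(\OOO\oplus\OOO(n_2)\oplus\OOO(n_3)\oplus\OOO(n_4))$ the proper transform $\hat X\sim 3M+(4-g)F$ is \emph{crepant} over $X$, hence has terminal (isolated) singularities and is smooth at the generic point of the vertex section $\hat C$; only then does the multiplicity estimate \eqref{eq:qe:scrolls} (applied with $q=2$, $c=0$, plus the condition that $\hat X$ does not contain the subscroll $\PP(\EEE')$) force $n_2=n_3=1$ and $n_4\le 3$, i.e.\ $g\in\{5,6,7\}$; (iii) most importantly, $g=7$ survives \emph{every} numerical test -- trigonal terminal Gorenstein threefolds of genus $7$ genuinely exist (case $T_{16}$ of \cite{Przhiyalkovskij-Cheltsov-Shramov-2005en}) -- and is excluded only by a class-group argument: the unique divisor $\hat D\in|M-3F|$ cuts $\hat X$ with multiplicity $3$ along $\hat C$, so $\hat D\cap\hat X=R_1+R_2+R_3$ splits, and a suitable integral combination $\sum a_iR_i$ pushes down to a Cartier divisor on $X$ not proportional to $K_X$, contradicting $\Pic(X)=\ZZ\cdot K_X$. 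Without (i)--(iii) your plan stalls at ``$X$ lies on a four-dimensional variety of minimal degree'' with no bound on $g$ and no mechanism to kill $g=7$, so the identification with Examples~\ref{example:trigonal:g=5} and~\ref{example:trigonal:g=6} does not follow.
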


\begin{sexample}[{\cite[Th.~1.6, Case~$T_3$]{Przhiyalkovskij-Cheltsov-Shramov-2005en}}]
\label{example:trigonal:g=5}
Consider the cone $W=W_3\subset\PP^6$ over the Segre variety $V\subset\PP^5$, $V\simeq\PP^2\times\PP^1$.
Let $\Pi\subset W$ be a plane. 
Consider the cubic hypersurface $Z=Z_3\subset\PP^6$ passing through $\Pi$ and let $X=X_8\subset\PP^6$ be a residual subvariety of $W\cap Z$ to $\Pi$, that is, $W\cap Z=X\cup P$. Then $X$ is an anticanonically embedded Fano threefold of genus 5. 
Moreover, the quadrics in $\PP^6$ passing through $X$ cut out $W\subset\PP^6$. For a general choice of $Z$, the singular locus of $X$ consists of a single ordinary double point and $\uprho(X)=1$.
In this case, $X$ is included in the Sarkisov link of the form
\begin{equation}
\label{equation-diagram:a}
\vcenter {
\xymatrix@R=11pt {
&\hat{X}\ar[dr]^\tau\ar@ {-->}[rr]^{\chi}\ar[dl]_f &&\hat{X}'\ar[dr]^{f'}\ar[dl]_{\tau'}
\\
Y && X &&\PP^1
}}
\end{equation}
where $Y=\PP^2$, $f$ is a conic bundle with a discriminant curve of degree 7, and $f'$ is a fibration into cubic surfaces 
\cite[Table~2, Case~3]{BrownCortiZucconi-2004}, \cite[2.9.4]{Takeuchi-2009}, \cite[5.2]{Jahnke-Peternell-Radloff-II}.
A general variety from this family is not rational 
\cite[Th\'eor\`eme~4.9]{Beauville1977}, \cite[Claim~5.5]{Przhiyalkovskij-Cheltsov-Shramov-2005en}.
\end{sexample}

\begin{sexample}
\label{example:trigonal:g=6}
Consider the anticanonical image $Y=Y_{16}\subset\PP^{10}$ of a del Pezzo threefold of degree $2$ (double space branched in a quartic). Let $ l $ be a line on $Y$ (with respect to the standard polarization $-\frac 12 K_Y$). The image of $l$ on $Y_{16}\subset\PP^{10}$ is a conic. Let $X=X_{10}\subset\PP^{7}$ be the image of $Y_{16}\subset\PP^{10}$ when projected from $ l$. Then $X=X_{10}\subset\PP^{7}$ is a trigonal Fano threefold with terminal Gorenstein singularities \cite[example 1.11, remark 4.17]{Przhiyalkovskij-Cheltsov-Shramov-2005en}.
As above, in this case a general variety $X$ of this type is included in the Sarkisov link \eqref{equation-diagram:a},
where $Y$ is a del Pezzo threefold of degree 2, $f$ is the blowup of a line on it, and $f'$ is a fibration into cubic surfaces 
\cite[2.9.3]{Takeuchi-2009}, \cite[6.5 ]{Jahnke-Peternell-Radloff-II}.
A general variety from this family is not rational \cite{Voisin1988}.
\end{sexample}

\begin{proof}[Proof of Theorem~\xref{theorem:Bs:hyp:trig-2}]
Let $W\subset\PP^{g+1}$ be an intersection (as a scheme) of all quadrics passing through $X$.
Fix a point $P\in W$ which is not singular for $X$ and consider a sufficiently general
subspace in
$\PP^{g}\subset\PP^{g+1}$ such that $S:=X\cap\PP^{g}$ is a smooth surface (of type K3).
It can be shown (see \cite[Lemma 7.9]{Saint-Donat-1974}) that
any quadric $Q\subset\PP^{g}$ passing through $S$ extends to a quadric
$Q'\subset\PP^{g+1}$ passing through $X$.
Therefore, the scheme $W_S:=W\cap\PP^{g}$
is cut out by quadrics in $\PP^{g}$ containing $S$ and by virtue of \cite[\S 7]{Saint-Donat-1974}
the variety $W_S$ is irreducible and it is a three-dimensional subvariety $W_S\subset\PP^{g}$ of minimal degree $g-2$.
Therefore, $W$ is also irreducible and it is a variety of minimal degree $W=W_{g-2}\subset\PP^{g+1}$.
Since $g\ge 5$, again according to \cite[\S 7]{Saint-Donat-1974},
the variety $W_S$ is smooth at $P$ and the same is true for $W$.
Therefore, $W$ is smooth outside the singular points of $X$:\quad $\Sing (W)\subset\Sing (X)$.
If $W$ is smooth everywhere, then according to the classical Enriques theorem \cite[Th. 1.11]{Saint-Donat-1974} the variety $W=W_{g-2}\subset\PP^{g+1}$ has a structure of a $\PP^3$-bundle over $\PP^1$.
So, in this case, $X$ is surjectively mapped onto the curve.
This contradicts our assumption $\uprho(X)=1$. 
Since the variety $X$ has at worst isolated singularities, $W=W_{g-2}\subset\PP^{g+1}$ is a cone with a zero-dimensional vertex $w_0$ over a smooth three-dimensional variety $V=V_{g-2}\subset\PP^{g}$ and $X$ is singular at $w_0$. The vertex of the cone $w_0\in W$ has a resolution $\sigma:\hat W\to W$, where $\hat W\simeq\PP_{\PP^1}(\EEE)$ and $\EEE $ is a rank $4$ vector bundle  on $\PP^1$. 
Moreover, the exceptional locus $\hat C:=\sigma^{-1}(w_0)$ is a smooth rational curve, the section of the bundle $\pi:\PP_{\PP^1}(\EEE)\to\PP^1$. 
Let $\hat X\subset\hat W$ be the proper transform of $X$ and let $\eta:\hat X\to X$ be the restriction of $\sigma $ to $\hat X$.
We may assume that $\EEE=\oplus_{i=1}^4\OOO_{\PP^1}(n_i)$, where
\[
\sum n_i=g-2\ge 3,\qquad n:=n_4\ge n_3\ge n_2>n_1=0,
\]
and the morphism $\hat W\to W\subset\PP^{g+1}$ is given by the linear system $|M|$, where $M$ is the tautological divisor on $\PP_{\PP^1}(\EEE)$. Let $F$ be the fiber of the projection $\pi:\hat W\simeq\PP_{\PP^1}(\EEE)\to\PP^1$. Then
\[
-K_{\hat W}=4M+\left (2-\textstyle\sum n_i\right) F=4M+(4-g) F,\qquad
-K_{\hat X}=M|_{\hat X}.
\]
Thus, $K_{\hat X}=\eta^*K_X$, i.e. the morphism $\eta:\hat X\to X$ is crepant and therefore the variety $\hat X$ has at worst terminal Gorenstein singularities.
According to the adjunction formula,
\[
\hat X\sim 3M+(4-g) F=3M+\left (2-\textstyle\sum n_i\right)F.
\]
In particular, $\hat X$ is nonsingular at the generic point of $\hat C$. Apply the estimate of the multiplicity \eqref{eq:qe:scrolls} with $q=2$ and $c=n_1=0$.
We obtain $n_2+n_3\le2$, i.e. $n_2=n_3=1$. Further, the three-dimensional subvariety $\PP_{\PP^1}(\EEE')$ corresponding to the surjection
\[
\EEE\to\EEE'=\OOO_{\PP^1}(n_2)\oplus\OOO_{\PP^1}(n_1)\oplus\OOO_{\PP^1},
\]
not a component of $\hat X$. Using \eqref{eq:qe:scrolls} again, we obtain
\[
3n_3+2-\sum n_i\ge 0,\qquad n_4\le 3.
\]
Thus, $n_2=n_3=1\le n\le 3$, i.e. 
\[
\EEE\simeq\OOO_{\PP^1}\oplus\OOO_{\PP^1}(1)\oplus\OOO_{\PP^1}(1)\oplus\OOO_{\PP^1}(n),\quad n=1,\, 2,\, 3.
\]
Therefore,
\[
g=n+4,\qquad g=5,\, 6,\, 7.
\]
Note also that $W$ is a cone over a smooth rational scroll $V\subset\PP^{n+4}$, $V\simeq\FF (1,1, n)$.

Consider the case $n=1$ (i.e. $g=5$). Then $V\simeq\PP^1\times\PP^2$ (with Segre embedding to $\PP^5$) and $W=W_3\subset\PP^6$
is a cone over $V$. Since $\hat X\sim 3M-F$, there exists a cubic hypersurface $Z=Z_3\subset\PP^6$ passing through $X$ and the residual subvariety in $Z\cap W$ is the plane $\sigma (F)$. We obtain the variety from Example~\ref{example:trigonal:g=5}.
Similarly, in the case of $n=2$ we get the variety from Example~\ref{example:trigonal:g=6}.

Let $n=3$. The linear system $|M-nF|$ contains a unique irreducible divisor $\hat D$.
This divisor is a rational scroll $\FF (1,1,3)$ and contains the section $\hat C$.
Using \eqref{eq:qe:scrolls} on $\hat D$ again, we get that the multiplicity of $\hat D\cap\hat X$ along $\hat C$ equals $3$.
This means that the divisor $\hat D\cap\hat X$ is reducible: $\hat D\cap\hat X=R_1+R_2+R_3$, where the intersection of $R_i$ with a general fiber $F$ of the projection $\pi$ is a line.
For a general choice $\hat X\in|3M-3F|$, the variety $\hat X$ is nonsingular and $X$ is a trigonal Fano threefold with an ordinary double point in $\sigma (\tilde C)$
(see \cite[case $T_{16}$]{Przhiyalkovskij-Cheltsov-Shramov-2005en}). In this case, $R_i$ are Cartier divisors and their some linear combination $\sum a_i R_i$
trivially intersects $\hat C$. But then $\sigma_* (\sum a_i R_i)$ is a Cartier divisor on $X$ which is not proportional to $K_X$. This contradicts our assumption $\uprho(X)=1$.
Theorem~\ref{theorem:Bs:hyp:trig-2} is proved.
\end{proof}

\subsection{Complement}
Consider the construction of trigonal varieties from the proof of Theorem~\ref{theorem:Bs:hyp:trig-2} in more detail.
The base locus of the linear system $|N|:=|M-F|$ consists exactly of the curve $\hat C$ and a general element $|N|$ is irreducible. Let $\gamma:\tilde W\to\hat W$ be the blowup of $\hat C$. The composition $\tilde W\longrightarrow\hat W\longrightarrow W$ is the blowup of the vertex of the cone $w_0\in W$, the natural projection of $\tilde W\to V$ is a $\PP^1$-bundle and the exceptional divisor $E :=\gamma^{-1}(\hat C)$ is its section.
Let $M^*:=\gamma^*M$ and $F^*:=\gamma^*F$.
Then the linear system $|\tilde N|:=|M^*- F^*- E|$ is base point free and defines a morphism $\phi:\tilde W\to\PP^{n+1}$. It is easy to compute:
\[
(M^*)^4=n+1,\ (M^*)^3\cdot F^*=1,\ M^*\cdot E^3=0,\ F^*\cdot E^3=1,\ E^4=-n-2.
\]
Since $\tilde N^3\cdot F^*=0$, the morphism $\phi$ contracts the divisors $F^*$: the image of each $F^*$ is a plane $\PP^2$ on $\phi(\tilde W)$ and in fibers $F^*\to\phi(F^*)$
are lines which are fibers of $\tilde W / V$. Hence we conclude that $\phi(\tilde W)=\phi(E)$ and $\phi(\tilde W)=\PP^{n+1}$ for $n=1$ and $ 2$. 
Since $\tilde N^3\cdot E=n-1$, the morphism $\phi_E:E\to\phi(E)$ is birational for $n=2$ and $3$, and for $n=3$ its image $\phi(\tilde W)$ is a quadric in $\PP^4$.
We obtain a commutative diagram
\begin{equation}
\label{eq:diag:trig}
\vcenter{
\xymatrix@R=11pt{
&&
\\
W &\hat W\ar[l]_{\sigma}\ar[d]^{\pi} &\tilde W\ar@/_1.5pc/[ll]\ar[d]\ar[rr]^(.3){\phi}\ar[l]_{\gamma}&&\phi(\tilde W)\subset\PP^{n+1}
\\
&\PP^1 &V\ar[l]\ar[rru]&
}}
\end{equation} 
Note that $E\simeq V\simeq\FF [1,1, n]$. We also have
\[
\tilde N^3\cdot\tilde X=2n-2.
\]

Consider the case $n=1$. Then $E\simeq\PP^1\times\PP^2$ and its normal bundle has the form $\NNN_{E /\tilde W}\simeq\OOO_{\PP^1}(-1)\boxtimes\OOO_{\PP^2}(-1)$. Therefore, there are two contractions $\tilde W\to\hat W$ and $\tilde W\to\hat W'$ of the divisor $E$ in different directions. We obtain that $\phi:\tilde W\to\PP^2$ decomposes into the composition $\phi:\tilde W\longrightarrow\hat W'\longrightarrow\PP^2$.
Here, the projection of $\hat W'\to\PP^2$ is a $\PP^1$-bundle and
induced rational map $\hat W\dashrightarrow\hat W'$ is the Kawamata flip \cite{Kawamata1989}.

For $n=2$ and $3$, the anticanonical divisor $-K_{\tilde W}=2\tilde N+2M^*- (n-2) F^*$ is relatively nef over $\phi(\tilde W)$ and the linear system $|M-nF|$ contains a unique irreducible divisor $D$, having multiplicity 1 along $\hat C$.
Let $\tilde D\subset\tilde W$ be the proper transform of $D$. It is easy to compute that $\tilde N^3\cdot\tilde D=\tilde N^2\cdot\tilde D^2=\tilde N\cdot\tilde D^3=0$.
Therefore, the divisor $\tilde D$ is contracted to a rational curve.
We also have
\[
\tilde N^2\cdot\tilde X\cdot\tilde D=0.
\]

Consider the case of $g=6$ (that is, $n=2$). Then $W$ is a cone over a smooth rational scroll $V\subset\PP^{6}$, $V\simeq\FF [1,1,2]$.
Moreover, $V\to\PP^3$ is the blowup of a line.
Since $\tilde N^3\cdot\tilde X=2$ and $\tilde N^2\cdot\tilde X\cdot E=0$, the induced map $\hat X\dashrightarrow\PP^3$ is double cover at a general point and does not contract any divisors.
Top line in \eqref{eq:diag:trig} induces maps on $X$:
\[
X\longleftarrow\hat X\longleftarrow\tilde X\longrightarrow V\longrightarrow\PP^3,
\]
where $\tilde X\to V\to\PP^3$ is the Stein factorization for $\phi|_{\tilde X}$.
Here $V\to\PP^3$ is a double cover and $V$ is a Fano threefold, double space branched in a quartic.
We obtain the variety from Example~\ref{example:trigonal:g=6}.

Finally, consider the case of $g=7$ (that is, $n=3$). Then the variety $W$ is a cone over a smooth rational scroll $V\subset\PP^{7}$, $V\simeq\FF [1,1,3]$.
The image of $\phi(\tilde W)$ is a quadric $Q\subset\PP^4$, singular along the line $l\subset Q$.
Since $\tilde N^3\cdot\tilde D=\tilde N^2\cdot\tilde D^2=0$, the morphism $\phi$ contracts the divisor $\tilde D$ to the line $l$.
Since $\tilde N^3\cdot\tilde X=4$ and $\tilde N^2\cdot\tilde X\cdot E=0$, the induced map $\hat X\dashrightarrow Q$ is double cover at a general point.

\section {Fano threefolds containing a plane}
\begin{pusto}
\label{notation_1}
Let $X=X_{2g-2}\subset\PP^{g+1}$ be a Fano threefold with terminal Gorenstein singularities
such that $\Pic(X)=\ZZ\cdot K_X$ and $g\ge 5$. In this section we assume that
$X$ is not trigonal and contains a plane.
\end{pusto}

\begin{theorem}
\label{th:plane}
Let $X=X_{2g-2}\subset\PP^{g+1}$ be an anticanonically embedded \textup(see Theorem~\xref{theorem:Bs:hyp:trig-1}\textup)
Fano threefold with terminal Gorenstein singularities
such that $\Pic(X)=\ZZ\cdot K_X$ and $g\ge 5$.
Suppose that $X$ is not trigonal and contains the plane $\Pi$. If the variety
$X$ is not rational, then $\g(X)=6$ and $X$ is birationally equivalent to a smooth three-dimensional cubic $Y_3\subset\PP^4$ and  the corresponding map 
$Y_3\dashrightarrow X_{2g-2}\subset\PP^{g+1}$ is given by the linear system of quadrics passing through a connected reduced curve $\Gamma\subset Y_3\subset\PP^4$ of degree $3$
of arithmetic genus $0$ such that $\dim\langle\Gamma\rangle=3$.
In this case, the plane $\Pi$ is an image of the cubic surface $\Pi_Y:=Y\cap\langle\Gamma\rangle$.
For the curve $\Gamma$, only the following possibilities occur \textup(see examples below\textup):
\begin{enumerate}
\item\label{th:plane1}
$\Gamma$ is a smooth rational twisted cubic curve;
\item\label{th:plane2}
$\Gamma=\Gamma_1\cup\Gamma_2$, where $\Gamma_2$ is a non-degenerate conic and $\Gamma_1$ is a line;
\item\label{th:plane3}
$\Gamma=\Gamma_1\cup\Gamma_2\cup\Gamma_3$ is a chain of three lines;
\item\label{th:plane4}
$\Gamma=\Gamma_1\cup\Gamma_2\cup\Gamma_3$ is the union of three lines passing through one point.
\end{enumerate}
\end{theorem}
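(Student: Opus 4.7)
The plan is to construct a Sarkisov-type link initiated by the plane $\Pi \subset X$, and to argue that the non-rationality of $X$ forces its end-product to be a smooth cubic threefold $Y_3 \subset \PP^4$; the inverse map is then realized as quadrics through a specific curve $\Gamma$.

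First I would pass to a small $\QQ$-factorialization $\tau : \hat X \to X$, which exists by \cite[Cor.~4.5]{Kawamata-1988-crep}, so that the proper transform $\hat\Pi$ of $\Pi$ becomes Cartier on $\hat X$. Since $\Pi \simeq \PP^2$ and $(-K_X)|_\Pi$ is the class of a line, adjunction gives $\OOO_{\hat\Pi}(\hat\Pi) \simeq \OOO_{\PP^2}(-2)$, whence
\[
(-K_{\hat X})^2 \cdot \hat\Pi = 1, \qquad (-K_{\hat X}) \cdot \hat\Pi^2 = -2, \qquad \hat\Pi^3 = 4.
\]
For a line $\ell \subset \hat\Pi$ we have $(-K_{\hat X}) \cdot \ell = 1$ and $\hat\Pi \cdot \ell = -2$, so by Lemma~\ref{lemma:FT} there is an extremal ray $R \subset \NE(\hat X)$ with $\hat\Pi \cdot R < 0$. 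Its contraction $\varphi : \hat X \to X'$ is divisorial with exceptional divisor $\hat\Pi$, and by Theorem~\ref{classification:extremal-rays} it is of type \type{B_5}: it contracts $\hat\Pi \simeq \PP^2$ to a non-Gorenstein terminal point of index $2$ on $X'$.

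Next I would continue the link, following the strategy of Theorem~\ref{theorem:constr}. After passing to a small $\QQ$-factorialization of the anticanonical model of $X'$, one produces new extremal contractions or flops; the degree $(-K)^3$ increases and the class rank decreases at each divisorial step. The process terminates with a Mori fiber space $\bar X \to Z$. Corollary~\ref{cor:rat} eliminates the conic bundles, $\PP^1$-, $\PP^2$- and quadric bundles, as well as del Pezzo fibrations of degree $\ge 5$, since any of these would force $X$ to be rational. Combined with the bound $(-K)^3 \le 72$ and the classification of Fano threefolds of index $\ge 2$ \cite{IP99}, the only possibility compatible with the non-rationality of $X$ is that the terminal Fano model arising in the link is the smooth cubic threefold $Y = Y_3 \subset \PP^4$. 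A direct degree count along the link then forces $\g(X) = 6$.

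With the birational map $\Psi : Y_3 \dashrightarrow X \subset \PP^{g+1}$ in hand, I would make it explicit by tracing $|-K_{\hat X}| = |H|$ back through the link. The corresponding linear system on $Y$ is a sublinear system of $|-K_Y| = |2H_Y|$ consisting of quadrics in $\PP^4$ vanishing on a connected reduced base curve $\Gamma \subset Y$; an intersection-theoretic bookkeeping of the link yields $\deg \Gamma = 3$ and $\p(\Gamma) = 0$. The condition $\dim \langle\Gamma\rangle = 3$ is forced, because if $\Gamma$ spanned only a plane the quadrics through it would drop the dimension of the image. The plane $\Pi \subset X$ appears as $\Psi(\Pi_Y)$, where $\Pi_Y := Y \cap \langle\Gamma\rangle$ is a cubic surface containing $\Gamma$, collapsed by $\Psi$ because $\langle\Gamma\rangle$ is a base locus of the linear system.

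The enumeration of $\Gamma$ is then classical: a connected reduced curve of degree $3$ and arithmetic genus $0$ spanning $\PP^3$ is one of a smooth twisted cubic, a transverse union of a plane conic and an incident line, a connected chain of three lines, or three non-coplanar concurrent lines, giving the four cases~\xref{th:plane1}--\xref{th:plane4}. The main obstacle is the identification step: showing that the Sarkisov link must terminate at the smooth cubic. This requires a careful exclusion of every competing Mori fiber space and Fano model, using the numerical constraints \eqref{eq:constr:rK}, the classification in Theorem~\ref{classification:extremal-rays}, and crucially the non-rationality hypothesis on $X$.
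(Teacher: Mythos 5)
Your first step already fails on a concrete point: on a small $\QQ$-factorialization $\hat X\to X$ the proper transform of $\Pi$ is in general \emph{not} $\PP^2$. The small morphism contracts curves lying on the transform of the plane, so $\hat\Pi\to\Pi$ is a nontrivial birational morphism; in the paper's Example~\xref{example:plane} the transform $\Pi_{\hat X}$ is a weak del Pezzo surface of degree $3$, the blowup of $\Pi$ at six points. Hence your adjunction computation $\OOO_{\hat\Pi}(\hat\Pi)\simeq\OOO_{\PP^2}(-2)$ and the immediate contraction of type \type{B_5} are unjustified: the $\hat\Pi$-negative extremal ray may well be $K$-trivial (a flopping ray through curves on $\hat\Pi$), and the plane becomes contractible only after a flop, as in the link of Example~\xref{example:plane}. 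More seriously, even granting the \type{B_5} contraction, your link runs in the \emph{wrong direction}. Contracting the plane produces a non-Gorenstein $\QQ$-Fano with a point of type $\frac12(1,1,1)$ --- this is the variety $Y'$ on the right-hand side of the link in Example~\xref{example:plane}, not the cubic. If $\uprho(\hat X)=2$ the MMP simply terminates there, and you never see $Y_3$; the cubic sits on the \emph{other} side of the link, as the image of the contraction of the divisor over $\Gamma$, which your construction never produces. You also cannot invoke Theorem~\xref{theorem:constr} to continue: its hypothesis excludes surfaces of anticanonical degree $1$, and the plane $\Pi$ has $(-K_X)^2\cdot\Pi=1$, which is exactly why the paper treats this case separately; after the index-$2$ point appears, the Gorenstein machinery (even degrees, Corollary~\xref{cor:rat} as used there) is unavailable as well.

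The paper's actual route avoids all of this by going through the \emph{projection from the plane}: it sets $\HHH:=\{H\mid H+\Pi\in|-K_X|\}$, so that $K_{\hat X}+\HHH_{\hat X}+\Pi_{\hat X}\sim 0$, shows the projection is birational (otherwise $X$ is birationally a $\PP^1$- or $\PP^2$-bundle, hence rational), runs a \emph{log} MMP directed by $K+\HHH$ down to a Mori fiber space $U\to Z$, kills $\dim Z\ge 1$ by rationality, and then --- this is the key identification step your sketch leaves as an unexecuted ``exclusion of every competing model'' --- applies the Campana--Flenner classification \cite[Theorem 1.5]{CampanaFlenner1993} to the pair $(U,H_U)$, where the general $H_U\in\HHH_U$ is a smooth del Pezzo surface with $\dim\HHH_U\ge 4$: for nonrational $U$ the only output is the cubic with $\HHH_U$ its hyperplane system. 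The bookkeeping $K_{\hat X}^2\cdot\HHH_{\hat X}=9$ then gives $\HHH_U\cdot\Gamma=3$ and $g=6$. Finally, your dismissal of the degenerate case $\dim\langle\Gamma\rangle=2$ is wrong as stated: quadrics through a plane cubic do \emph{not} drop the image dimension --- they give an $8$-dimensional system mapping $Y$ to a genus-$7$ Fano $X^{\sharp}\subset\PP^8$ whose projection is $X$; the paper excludes this case because the induced cubic-surface fibration makes $X$ trigonal, contradicting the hypothesis. That trigonality argument is an essential step missing from your proposal.
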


Let us give some examples.

\begin{sexample}\label{example:plane}
Let $Y=Y_3\subset\PP^4$ be a smooth three-dimensional cubic and let $\Gamma\subset Y$ be a
twisted rational cubic curve. Consider the blowup
$f:\hat{X}\to Y$ of $\Gamma$.
Let $\hat D\subset\hat{X}$ be an exceptional divisor,
let $L$ be a hyperplane section of $Y$, and let $L^*:=f^*L$.
The linear system $|-K_{\hat{X}}|=|2L^*-D|$ is base point free and defines a
morphism $\tau:\hat{X}\to X=X_{10}\subset\PP^7$ to a Fano threefold of genus $6$.
Let $\Pi_Y\subset Y$ be the hyperplane section passing through $\Gamma$.
Then for its preimage $\Pi_{\hat{X}}\subset\hat{X}$ we have $\Pi_{\hat{X}}\sim L^*- D$.
This implies 
\[
\deg\tau(\Pi_{\hat{X}})=(-K_{\hat{X}})^2\cdot (L^*- D)=1, 
\]
i.e. $\Pi:=\tau(\Pi_{\hat{X}})$
is a plane. All curves contracted by the morphism $\tau$ lie on the surface $\Pi_{\hat{X}}$ (which is not contracted).
Therefore, $\tau$ is a small morphism and the singularities of $X$ are terminal.
Moreover, $\Pi_{\hat{X}}$ is a weak del Pezzo surface of degree 3 and the restriction of $\tau_\Pi:\Pi_{\hat{X}}\to\Pi$ is the blowup of six points in almost general position.
It is also clear that $\uprho(X)<\uprho(\hat X)=2$, i.e. $\uprho(X)=1$.
The construction can be completed to the following Sarkisov link 
\begin{equation*}
\vcenter {
\xymatrix@R=11pt {
&\hat{X}\ar[dr]^\tau\ar@ {-->}[rr]^{\chi}\ar[dl]_f &&\hat{X}'\ar[dr]^{f'}\ar[dl]_{\tau'}
\\
Y && X && Y'
}}
\end{equation*}
where $\chi$ is a flop and $f'$ is an extremal Mori contraction
which contracts its preimage $\Pi_{\hat{X}}$ to a non-Gorenstein point
of type $\frac 12 (1,1,1)$ \cite{BlancLamy:cubic}, \cite{Cutrone-Marshburn}.
\end{sexample}
The following example is a degeneration of Example~\ref{example:plane}.
\begin{sexample}\label{example:plane:a}
Let $Y=Y_3\subset\PP^4$ be a smooth three-dimensional cubic and let $\Gamma\subset Y$ be a connected
reducible curve of degree 3 of arithmetic genus 0 having ordinary double points.
Thus, the curve $\Gamma$ is of type~\ref{th:plane}\ref{th:plane2} or~\ref{th:plane3}.
Let be
$f:\hat{X}\to Y$ be the the blowup of $\Gamma$. The variety $\hat X$ has only terminal Gorenstein (non-factorial) singularities over $\Sing (\Gamma)$.
As above, the linear system $|-K_{\hat{X}}|$ defines
a morphism $\tau:\hat{X}\to X=X_{10}\subset\PP^7$ to a Fano threefold of genus $6$ with terminal Gorenstein singularities and the image of a hyperplane section passing through $\Gamma$ is a plane on $X$.
\end{sexample}

\begin{sexample}\label{example:plane1}
Let $Y=Y_3\subset\PP^4$ be a smooth three-dimensional cubic of the type~\ref{th:plane}\ref{th:plane4}, i.e. let $\Gamma=\Gamma_1\cup\Gamma_2\cup\Gamma_3\subset Y$ be the union of three lines passing through one point $P\in X$ and not lying in the same plane.
Consider the symbolic blowup
$f_1:\hat{X}_1\to Y$ of $\Gamma$. Recall that this is the relative $\operatorname{\mathbf{Proj}}$ of the sheaf of algebras $\oplus\mathcal {I}_{\Gamma}^{[n]}$,
where $\mathcal {I}_{\Gamma}^{[n]}$ is the ideal in $\OOO_{Y}$ consisting of elements that vanish with the multiplicity $\ge n $ along $\Gamma$.
It can be shown that in this situation $\hat X_1$ has a unique singular point that is terminal of type $\frac12 (1,1,1)$ \cite{Prokhorov-Reid}. 
Let $\hat X\to\hat{X}_1$ be the blowup of this point and let $\hat E_1$ be its exceptional divisor. Then $\hat X\to\hat{X}_1$ is an extremal contraction of type \type{B_5}. The linear system $|-K_{\hat{X}}|$ defines a morphism $\tau:\hat{X}\to X=X_{10}\subset\PP^7$ to a Fano threefold of genus $6$ with Gorenstein terminal singularities and the image of $\hat E_1$ is a plane on $X$.
\end{sexample}

\begin{proof}[Proof of Theorem~\xref{th:plane}]
Put $\LLL:=|-K_X|$. Then $\dim\LLL=g+1\ge 6$. Consider also the following linear system $\HHH\subset\LLL$:
\[
\HHH:=\{H\mid H+\Pi\in\LLL\}.
\]
Clearly, $\HHH$ has no fixed components and
\[
\dim\HHH=\dim\LLL-3=g-2\ge 3.
\]
Consider the map
\[
\Phi=\Phi_{\HHH}:X\dashrightarrow\PP^{g-2}
\]
given by the linear system $\HHH$, i.e. the projection from $\Pi$.
We put $Y:=\Phi(X)$.

\begin{lemma}
\label{lemma-predv-image}
The map $\Phi$ is birational onto its image and $g\ge 6$.
\end{lemma}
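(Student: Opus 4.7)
The statement has two parts --- birationality of $\Phi$ and the bound $g\ge 6$ --- and I would handle them in this order.

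For birationality, the key input is the hypothesis that $X$ is not trigonal. Since $g\ge 5$, Theorem~\ref{theorem:Bs:hyp:trig-2} asserts that the anticanonically embedded $X=X_{2g-2}\subset\PP^{g+1}$ is cut out scheme-theoretically by the quadrics containing it. Any such quadric $Q\supset X$ automatically contains the plane $\Pi$, so for every $3$-plane $M\supset\Pi$ the intersection $Q\cap M$ is a reducible quadric $\Pi\cup\Pi_Q$ in $M\simeq\PP^3$, where $\Pi_Q\subset M$ is a residual plane. Consequently $X\cap M$ is cut out in $M$ by $\Pi$ together with the linear system of residual planes $\{\Pi_Q\}_Q$, and the scheme-theoretic residual of $\Pi$ in $X\cap M$ equals the intersection of the $\Pi_Q$ in $M$.

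Now fix a general point $P\in X\setminus\Pi$ and set $M:=\langle\Pi,P\rangle$, so that $\Phi^{-1}(\Phi(P))$ coincides with this residual scheme. Every $\Pi_Q$ must pass through $P$, hence the residual planes form a linear system inside $|\OOO_M(1)|$ of planes through $P$. The plan is to show that this system has projective dimension at least $2$; three sufficiently general members then meet only at $P$, collapsing the fiber of $\Phi$ to $\{P\}$ and giving birationality. The dimension count rests on $h^0(\PP^{g+1},\,I_X(2))=\binom{g-2}{2}\ge 3$ for $g\ge 5$ (Riemann-Roch combined with Kawamata-Viehweg vanishing), together with a rank estimate for the restriction map $H^0(\PP^{g+1},\,I_X(2))\to H^0(M,\,I_{\Pi\cup\{P\}}(2))$, whose target is $3$-dimensional. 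Verifying that this restriction is of full rank $3$ for a general $P$ is the principal obstacle: a rank defect would force all residual planes through $P$ to share a common line, producing a $2$-parameter family of lines on $X$ that meet $\Pi$ and sweep out $X$. I would rule out this alternative using $\Pic(X)=\ZZ\cdot K_X$ and the class $[\Pi]\in\Cl(X)$, extracting from such a covering line family a divisor class on $X$ that is incompatible with the Picard rigidity together with the known class of $[\Pi]$.

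For the bound $g\ge 6$, once $\Phi:X\dashrightarrow Y\subset\PP^{g-2}$ is known to be birational onto its image, the image $Y$ is three-dimensional. If $g=5$, then $Y\subset\PP^3$ is a threefold, forcing $Y=\PP^3$, so $\Phi$ yields a birational equivalence $X\dashrightarrow\PP^3$ and $X$ is rational. This contradicts the non-rationality hypothesis of Theorem~\ref{th:plane}, whence $g\ge 6$. The birationality step thus carries essentially all the weight of the lemma: once it is secured, the inequality on $g$ is immediate.
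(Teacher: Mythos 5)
Your opening reduction coincides with the paper's: since $X$ is not trigonal, Theorem~\ref{theorem:Bs:hyp:trig-2} makes $X$ an intersection of quadrics, every quadric through $X$ cuts a $3$-plane $M\supset\Pi$ in $\Pi$ plus a residual plane, and the fiber of $\Phi$ through a general $P$ is the intersection of these residual planes. But right there you miss the one-line observation that makes the lemma short: an intersection of planes in $M\simeq\PP^3$ is a \emph{linear subspace}, so the general fiber is a point, a line, or a plane --- no count of $h^0(\PP^{g+1},I_X(2))$ and no rank estimate for a restriction map is needed. The paper then kills the two degenerate cases with the standing non-rationality hypothesis of Theorem~\ref{th:plane}: if the general fiber is a plane, then $Y$ is a (unirational, hence rational) curve and $\Phi$ is birationally a $\PP^2$-bundle; if it is a line, then $Y$ is a rational surface and $\Phi$ is birationally a $\PP^1$-bundle; in either case $X$ is rational, a contradiction. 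Your final step ($g=5$ forces $Y=\PP^3$, hence $X$ rational, hence $g\ge 6$) is exactly the paper's.

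The genuine gap is in the middle of your birationality argument. You yourself flag as ``the principal obstacle'' the claim that $H^0(\PP^{g+1},I_X(2))\to H^0(M,I_{\Pi\cup\{P\}}(2))$ has full rank $3$ for general $P$, and you never prove it; your fallback --- deriving a contradiction with $\Pic(X)=\ZZ\cdot K_X$ from a covering family of lines meeting $\Pi$ --- is only a sketch and is dubious as stated: a rational map $X\dashrightarrow Y$ with one-dimensional general fiber is merely a rational fibration and produces no divisor class on $X$, so it imposes no visible constraint on the Picard group (every Fano threefold with $\uprho=1$ carries covering families of rational curves). You also treat only the rank-defect-by-one case (common line) and omit the case where all residual planes through $P$ coincide (plane fibers). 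The exclusion mechanism the paper uses in both degenerate cases is rationality of $X$, contradicting the hypothesis of Theorem~\ref{th:plane} --- the same hypothesis you correctly invoke for $g\ge 6$ --- and your proposal offers no complete substitute for it. So: correct setup and correct final step, but the core of the birationality proof replaces a simple trichotomy-plus-non-rationality argument with an incomplete cohomological one whose hardest case is deferred to an unsupported claim.
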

\begin{proof}
The general fiber $\Phi^{-1}(y)$ of the map $\Phi:X\dashrightarrow Y$ is the residual subvariety of the intersection of
$X$ and the subspace $\PP^3$ generated by $\Pi$ and the point $y$. The set $X\cap\PP^3$
is an intersection of quadrics in $\PP^3$ and contains $\Pi$.
So $X\cap\PP^3\setminus\Pi$ is a linear subspace in $\PP^3$.
If this is a point, then $\Phi$ is birational. If $X\cap\PP^3\setminus\Pi$ is a plane, then
$Y$ is a rational curve and $\Phi$ is a birationally $\PP^2$-bundle. Hence, the variety $X$ is rational in this case. This contradicts our assumptions.
If $X\cap\PP^3\setminus\Pi$ is direct, then
$Y$ is a rational surface and $\Phi$ is a birationally $\PP^1$-bundle. Again $X$ is rational, a contradiction.
Thus, the map $\Phi$ is birational.
If $g=5$, then $Y=\PP^3$ and therefore the variety $X$ is also rational. This again contradicts our assumptions.
\end{proof}

Let $\sigma:\tilde X\to X$ be the blowup of $\Pi$. This is a small birational morphism that can be extended to $\QQ$-factorialization $\hat X\to\tilde X\to X$.
Since the linear system $\HHH_{\hat{X}}$ is base point free,
the pair $(\hat{X},\HHH_{\hat{X}})$ is terminal.
We have
\begin{equation*}
K_{\hat{X}}+\HHH_{\hat{X}}+\Pi_{\hat{X}}\sim 0.
\end{equation*}
Let us run the MMP with respect to the log divisor $K_{\hat X}+\hat\HHH$, first over $Y$, and then over $\operatorname{\mathbf{Spec}}(\CC)$.
We obtain the following diagram:
\[
\xymatrix@C=50pt@R=15pt{
\tilde X\ar[d]_{\sigma}\ar[drr]&\hat{X}\ar[l]\ar@{-->}[r]^{\text{MMP}/ Y}\ar[dr]^{f}&V\ar[d]^{\varphi}\ar@{-->}[r]^{\text{MMP}}& U\ar[d]^{\upsilon}
\\
X\ar@{-->}[rr]^{\Phi}& &Y&Z
} 
\]
We have
\begin{equation}
\label{eq-HB}
K_V+\HHH_V+\Pi_V\sim 0.
\end{equation}
Note that $\HHH_V$ is the proper transform of the linear system of
hyperplane sections of the variety $Y$.
In particular, it follows that $\HHH_V$ is base point free and
$\dim\HHH_V=g-2\ge 4$.
The variety $U$ has a structure of a Mori fiber space $\upsilon:U\to Z$.
If $\dim Z=2$, then $\upsilon$ is a rational curve fibration over a rational surface $Z$.
Since the image of the map defined by the linear system $\HHH_{U}$ is
three-dimensional, $\HHH_{U}$ is not a preimage of a linear system on $Z$, i.e. 
$\HHH_{U}$ is $\upsilon$-horizontal. For a fiber $l$ of the contraction $\upsilon$ over a sufficiently general point
we have $ U\simeq\PP^1$, $K_U\cdot l=-2$, and  $\HHH_U\cdot l>0$.
Therefore, $\HHH_U\cdot l=1$.
Thus, $\upsilon$ is a rational curve fibration  with a birational section $\HHH_U$.
In this case, $X$ must be rational.
If $\dim Z=1$, then $Z\simeq\PP^1$ and the generic fiber $\upsilon $ is either a two-dimensional quadric or
by plane. As above, $X$ is rational.

Thus, $Z$ is a point.
In this case, $U$ is a Fano threefold with terminal $\QQ$-factorial singularities
and Picard number $\uprho(U)=1$. Moreover, there is a decomposition
\begin{equation}
\label{eq-HB1}
K_U+\HHH_U+\Pi_U\sim 0,
\end{equation}
where $\Pi_U$ is a prime Weil divisor, $\HHH_U$ is a linear system of Weil divisors without
fixed components, $\dim\HHH_U=g-2\ge 4$, and the pair $(U,\HHH_U)$ is terminal.
In particular, the generic element $H_U\in\HHH_U$ is nonsingular, lies in the smooth locus of $U$, and
$\Bs|\HHH_U|\le 0$.
By the adjunction formula $-K_{H_U}\equiv\Pi_U|_{H_U}$ and this divisor is ample.
So $H_U$ is a smooth del Pezzo surface. In particular, it is
rational.
By our assumption, the variety $U$ is not rational.
Such pairs  $(U, H_U)$ are classified (see \cite[Theorem 1.5]{CampanaFlenner1993}).
Taking the inequality $\dim|H_U|\ge\dim\HHH_U\ge 4$ into account we obtain that $ U=U_3\subset\PP^4$
and $\HHH_U$ is a complete linear system of hyperplane sections and $\Pi_U$ is also a hyperplane section. In particular, $\HHH_U$
is base point free. This implies that the map $\hat{X}\dashrightarrow U$ is defined by the
linear system $\HHH_{\hat{X}}$. Therefore, $U=Y=V$ and this map is a morphism.
We obtain that $g=6$ and the linear system $\HHH_U$ is base point free.
Let $\hat E=\sum\hat E_i$ be the exceptional divisor of  $f$ and let $\Gamma\subset Y$ be the union of the one-dimensional components of $ f(E)\subset Y$.
The curve $\Gamma$is contained in the surface $\Pi_Y$ which is a hyperplane section of the cubic $Y$.
In particular, $\dim \langle\Gamma\rangle\le 3$.
Further,
\[
K_{\hat{X}}^2\cdot\HHH_{\hat{X}}=(f^*K_U+E)^2\cdot f^*\HHH_{U}=K_U^2\cdot\HHH_{U} -\HHH_{U}\cdot\Gamma.
\]
On the other hand,
\[
K_{\hat{X}}^2\cdot\HHH_{\hat{X}}=-K_{\hat{X}}^3 -K_{\hat{X}}^2\cdot\Pi_{\hat{X}}=10-1=9.
\]
Hence, $\HHH_{U}\cdot\Gamma=K_U^2\cdot\HHH_{U} -9=12-9=3$. Thus, $\Gamma\subset Y$ is a curve of degree $ 3$.
Note that $\LLL_Y=f_*\hat\LLL$ is a linear system of quadratic sections of the cubic $Y$, $\dim\LLL_Y=\dim\LLL=7$, and $\Bs\LLL_Y\supset\Gamma$. This linear system defines a map $Y\dashrightarrow X=X_{10}\subset\PP^7$.

We claim that the curve $\Gamma$ is connected. This is obvious if $\dim\langle\Gamma\rangle=2$.
Let $\dim\langle\Gamma\rangle=3$. Then $\Gamma$ is a curve of degree 3 in $\PP^3$, through which at least two different quadrics pass.
Therefore, $\Gamma$ is contained in the singular quadric $Q\subset\PP^3$. If $Q$ is irreducible, then $\Pic(Q)\simeq\ZZ$ and the curve $\Gamma$ is obviously connected.
Hence, we may assume that $Q$ is the union of two planes. Then
$\Gamma$ by the union of a conic lying in one plane and a line lying in another one.
It is easy to see that such a curve $\Gamma$ is connected.
Therefore, for $\Gamma$ there are only possibilities~\ref{th:plane}\ref{th:plane1}-\ref{th:plane4} and possibly another additional possibility
\begin{enumerate}
\setcounter{enumi}{4}
\item\label{th:plane5}
$\dim\langle\Gamma\rangle=2$ and
$\Gamma=Y\cap\langle\Gamma\rangle$, i.e. $\Gamma$ is a plane cubic.
\end{enumerate}
In all cases, the curve $\Gamma$ is cut out on $Y$ by the linear system $\LLL_{Y,\Gamma}$ of quadratic sections passing through it.
According to the above $\LLL_{Y}\subset\LLL_{Y,\Gamma}$ and $\Gamma=\Bs\LLL_{Y,\Gamma}$ (as a scheme).
It is also easy to compute that $m:=\dim\LLL_{Y,\Gamma}=7+\p (\Gamma)$.
In the cases~\ref{th:plane}\ref{th:plane1} -\ref{th:plane4} we have $\LLL_Y=\LLL_{Y,\Gamma}$ and therefore the variety $X$ coincides with the image of the map
$\Phi_{\LLL_{Y,\Gamma}}:Y\dashrightarrow X=X_{10}\subset\PP^{7}$ which should be birational.
The existence of the corresponding $X$ follows from Examples~\ref{example:plane} -\ref{example:plane1}.

Consider the case~\ref{th:plane5}. Then
$\LLL_Y$ has codimension 1 in $\LLL_{Y,\Gamma}$.
This implies that the linear system $\LLL_{Y,\Gamma}$ defines a map $\Phi_{\LLL_{Y,\Gamma}}:Y\dashrightarrow X^{\sharp}\subset\PP^{8}$
and our original variety $X=X_{10}\subset\PP^7$ is the projection of $X^{\sharp}$ from $P\in\PP^8$.
Let $\HHH_{Y,\Gamma}$ be a linear system of hyperplane sections of our cubic $Y$ passing through $\Gamma$.
Then $\Gamma=\Bs\HHH_{Y,\Gamma}$.
Let $\sigma:\tilde Y\to Y$ be the blowup of $\Gamma$ and let $\HHH_{\tilde Y,\Gamma}$ be the proper (birational) transform of $\HHH_{Y,\Gamma}$. Then $\tilde Y$ has only isolated singularities of type \type{cDV} (cf. \cite[Theorem~4]{Cutkosky-1988}) and the linear system $\HHH_{\tilde Y,\Gamma}$ is base point free and defines a fibration $\nu:\tilde Y\to\PP^1$ into cubic surfaces. The anticanonical linear system $|-K_{\tilde Y}|$ is the proper transform $\LLL_{\tilde Y,\Gamma}$ of the linear system $\LLL_{Y,\Gamma}$ and
$\LLL_{\tilde Y,\Gamma}\sim \HHH_{\tilde Y,\Gamma}+\sigma^*\HHH_{Y,\Gamma}$. Hence $\LLL_{\tilde Y,\Gamma}$ defines a map to $X^{\sharp}\subset\PP^{8}$.
It follows that the divisor $-K_{\tilde Y}$ is ample, $\tilde Y\dashrightarrow X^{\sharp}$ is an isomorphism, and $\uprho(X^{\sharp})=2$. Moreover, $X^{\sharp}\subset\PP^{8}$ is an anticanonically embedded Fano threefold of genus 7 with at worst terminal Gorenstein singularities \cite{Shin1989}.
The fibers of $\nu$ are mapped to cubic surfaces at $X\subset\PP^7$.
This means that the variety $X$ is trigonal and contradicts our assumptions.
Theorem~\ref{th:plane} is proved.
\end{proof}

\section {$\QQ$-factorial Fano threefolds}
In this section, we classify factorial nonrational Fano threefolds.
We do not assume that the Picard number equals 1.
This is due to the fact that in the construction \eqref{eq:constr} 
the output variety $\hat X^{(n)}$ can have a Picard number $>1$.

\begin{theorem}
\label{th:factorial}
Let $X$ be a Fano threefold with terminal factorial singularities.
Suppose that there are no nontrivial extremal contractions $X\to X'$ of types \type{B_1}-\type{B_4}.
If $X$ is not rational, then for $X$ one of the following possibilities holds:
\begin{enumerate}
\renewcommand\labelenumi{\rm(\arabic{section}.\arabic {subsection}.\alph {enumi})}
\renewcommand\theenumi{\rm(\arabic {section}.\arabic {subsection}.\alph {enumi})}
\item\label{fact:iota=2}\label{fact:cubic}
$\uprho(X)=1$, $\iota(X)=2$, $\dd(X)\le3$, and if $\dd(X)=3$, then $X$ is nonsingular;

\item\label{fact:iota=1}
$\uprho(X)=1$, $\iota(X)=1$, $\g(X)\le 6$;
\item\label{fact:X14}
$\uprho(X)=1$, $\iota(X)=1$, $X=X_{14}\subset\PP^9$ is a smooth threefold of the genus $8$ \cite{IP99};
\item\label{fact:P1P2}
$\uprho(X)=2$, $\g(X)=4$, $X$ is a double cover of $X\to\PP^1\times\PP^2$ with branch divisor of bidegree $(2, 3)$;

\item\label{fact:P2P2:22}
$\uprho(X)=2$, $\g(X)=7$, $X$ is a divisor of bidegree $(2,2)$ on $\PP^2\times\PP^2$;
\item\label{fact:P2P2:2}
$\uprho(X)=2$, $\g(X)=7$, $X$ is a double cover of $X\to V$ with branch divisor $B\in|-K_V|$, where $V$ is a smooth divisor of bidegree $(1,1)$ on $\PP^2\times\PP^2$;
\item\label{fact:P1P1P1}
$\uprho(X)=3$, $\g(X)=7$, $X$ is a double cover of $X\to\PP^1\times\PP^1\times\PP^1$ with branch divisor
tridegree $(2,2,2)$.
\end{enumerate}
All smooth threefolds of the types described above are not rational except for the threefolds with $\uprho(X)=1$, $\iota(X)=1$, $\g(X)=6$,
for which only the nonrationality of a \emph{general} member of the family is known.
\end{theorem}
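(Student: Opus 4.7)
The strategy is to split on $\uprho(X)$: apply the main Theorem~\ref{thm:main} when $\uprho(X) = 1$, and otherwise analyze the possible extremal rays on $X$ using Theorem~\ref{classification:extremal-rays} together with the rationality criterion of Corollary~\ref{cor:rat}. Throughout, one uses that terminal plus factorial on a threefold already implies Gorenstein (since then $K_X$ is Cartier), so the hypotheses of \ref{classification:extremal-rays} and \ref{cor:rat} are in force.

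For $\uprho(X) = 1$ I invoke Theorem~\ref{thm:main}. If $\iota(X) \ge 2$, parts (a)--(c) there provide exactly the cubic hypersurface, the double space of index~$2$, and the Veronese double cone, all of degree $\le 3$; part~(a) moreover records that the cubic is smooth, which yields the smoothness claim in~\ref{fact:cubic}. If $\iota(X) = 1$, factoriality puts us in item~(v) of Theorem~\ref{thm:main}, giving either $g(X) \le 6$ (case~\ref{fact:iota=1}) or $g(X) = 8$ with $X$ smooth, the latter being the Fano threefold $X_{14} \subset \PP^9$ of case~\ref{fact:X14}.

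For $\uprho(X) \ge 2$, I fix an extremal ray $R$ on $X$: by hypothesis it is not of type $B_1$--$B_4$. Corollary~\ref{cor:rat} eliminates the fibration types $C_2$, $D_2$, $D_3$, and $D_1$ with generic fiber of anticanonical degree $\ge 5$, each of which would force $X$ to be rational. A $B_5$ contraction yields an exceptional $\PP^2$ with $(-K_X)^2 \cdot E = 1$; a direct inspection based on Theorem~\ref{classification:extremal-rays} shows that on a non-rational factorial terminal Gorenstein Fano threefold whose remaining extremal rays are only of fibration type, no $B_5$ contraction can occur. Hence every extremal ray on $X$ is of type $C_1$ (conic bundle) or $D_1$ with $\dd \le 4$.

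The final task is to enumerate the factorial terminal Gorenstein Fano threefolds with $\uprho \ge 2$ all of whose extremal contractions are of these restricted types, and to select the non-rational families. Matching against the Mori--Mukai classification of smooth Fano threefolds \cite{IP99}, the candidate entries are exactly 2-1, 2-20, 2-24 and 3-1, giving the four families in~\ref{fact:P1P2}, \ref{fact:P2P2:22}, \ref{fact:P2P2:2}, and \ref{fact:P1P1P1}; non-rationality of the general (and in most cases of every smooth) member follows from the intermediate-Jacobian and Hodge-theoretic obstructions of \cite{Beauville1977,Voisin1988}. The main obstacle is precisely this last step: passing from the smooth Mori--Mukai list to the factorial terminal Gorenstein category and verifying that no genuinely new family appears. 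This is handled via a smoothing argument preserving $\uprho$ and the extremal contraction structure, using that a factorial terminal Gorenstein Fano threefold with the above restricted contraction types deforms to its smooth Mori--Mukai model without destroying the obstruction to rationality.
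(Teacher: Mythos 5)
Your proposal has two genuine gaps. First, the treatment of $\uprho(X)=1$ is circular: Theorem~\ref{thm:main} is the paper's summary theorem, and its item (v) (``$\g(X)\le 6$, or $\g(X)=8$ and $X$ smooth'') is precisely the $\uprho(X)=1$ content of Theorem~\ref{th:factorial} that you are supposed to be proving; you cannot invoke it. The paper handles $\uprho(X)=1$ by citing \cite{Prokhorov-GFano-1} for $\iota(X)>1$ and \cite{Prokhorov-planes} for $\g(X)=7$ and $\g(X)\ge 9$, and then supplies the genuinely new ingredient: a proof that $\g(X)=8$ forces smoothness. That argument is substantive --- a singular point of type \type{cA_1} gives rationality by \cite{Prokhorov-factorial-Fano-e}, while a worse singularity propagates through the Sarkisov link $X\gets\tilde X\dashrightarrow X'\to\PP^2$ (blowup of a line) to a point of multiplicity $\ge 3$ on the degree-$5$ discriminant curve, so the pencil of lines through that point meets the discriminant of a standard model in at most $3$ points and Iskovskikh's criterion forces rationality, a contradiction. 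None of this appears in your proposal.

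Second, for $\uprho(X)\ge 2$ your two key steps are unsupported. The exclusion of \type{B_5} is not a ``direct inspection'': the paper first argues geometrically that the second contraction must be a conic bundle over $\PP^2$ (two two-dimensional fibers of distinct contractions would meet along a curve contracted by both), writes $E\sim aH-bM'$, and shows the resulting system of Diophantine equations in intersection numbers has no solutions. More seriously, your final enumeration via the Mori--Mukai list plus a ``smoothing argument'' cannot prove the statement as formulated: the theorem describes the possibly \emph{singular} $X$ itself as a double cover or a divisor of prescribed (bi/tri)degree, which no deformation to a smooth model can deliver, and your claims that smoothing preserves $\uprho$, the contraction structure, and the obstruction to rationality are exactly the kind of assertions that need (and here lack) proof --- rationality is not known to behave well in families with the tools available in this paper. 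The paper instead works directly on $X$: writing $M'\sim aH-M$, the intersection relations admit unique solutions ($a=1$, $d=2$, $g=4$, $d'=8$ in the \type{D_1}--\type{C_1} case; $a=1$, $d=d'=6$, $g=7$ in the \type{C_1}--\type{C_1} case), the product morphism $f\times f'$ is shown to be finite with degree and branch divisor computed via Hurwitz and adjunction; and for $\uprho(X)>2$ every contraction is a conic bundle, Lemma~\ref{lemma:rho=3:negative} forces the base to be $\PP^1\times\PP^1$, hence $\uprho(X)=3$, the effective cone is generated by exactly three del Pezzo fibration divisors with $aH=F_1+F_2+F_3$ and $K_{F_i}^2=4/a\in\{1,2,4\}$, the values $1$ and $2$ being excluded by Theorem~\ref{thm:bs} and a count of families of planes. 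Incidentally, your Mori--Mukai labels are wrong: the families obtained are nos.\ 2.2, 2.6 (both subcases) and 3.1 in the standard numbering, not 2-1, 2-20, 2-24.
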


All described threefolds are deformations of nonsingular Fano threefolds \cite{IP99}, \cite[(8.1)]{Mori-Mukai1983}.

\begin{sexample}
Let $X=X_{10}\subset\PP^7$ be a Fano threefold with $\iota=1$ and $\g(X)=6$ having a factorial singular point $P$ of type \type{cA_1^n}.
By \cite[Theorem~2]{Prokhorov-factorial-Fano-e} or \cite[Prop.~7.11]{Jahnke-Peternell-Radloff-II} there is a Sarkisov link
\[
\xymatrix@R=8pt {
&\tilde X\ar[dl]\ar@ {-->}[r] & X'\ar[dr]
\\
X &&&\PP^2
}
\]
where $\pi:X'\to\PP^2$ is a 
conic bundle with discriminant curve $\Delta$ of degree $6$.
Here $\tilde X\to X$ is the blowup of the maximal ideal of the point $P$.
The singular locus of $X'$ consists of one point of type \type{cA_1^{n-2}}.
If $P\in X$ is an ordinary double point or is of type \type{cA_1^3}, then the conic bundle $\pi$ is standard and
the variety $X$ is not rational \cite{Beauville1977}.
\end{sexample}

\begin{proof}[Proof of Theorem~\xref{th:factorial}]
First suppose that $\uprho(X)=1$.
The situation where $\iota(X)>1$ were considered in \cite{Prokhorov-GFano-1}.
The cases $\iota(X)=1$ for $\g(X)=7$ and $\g(X)\ge 9$ were considered in \cite[\S~4]{Prokhorov-planes}.
We prove that in the case $\g(X)=8$ the variety $X$ is smooth.
Suppose that $X$ has a singular point $P$.
If the singularity $P\in X$ is of type \type{cA_1}, then $X$ is rational
by \cite[theorem~2]{Prokhorov-factorial-Fano-e}.
So we may assume that $P\in X$ is of a type worse than \type{cA_1}.
It is known (see, e.g., \cite[\S~4]{Prokhorov-planes}) that
there is a Sarkisov link $X\gets\tilde X\dashrightarrow X'\to\PP^2$, where $\pi:X'\to\PP^2$ is a 
conic bundle with discriminant curve $\Delta$ of degree $5$.
Here $\tilde X\to X$ is the blowup of a line that does not intersect $\Sing (X)$.
Moreover, $X'$ has a singularity $P'$ locally isomorphic to $P\in X$
(in particular, the type of $P'\in X'$ is worse than \type{cA_1}). As in the proof
\cite[Lemma~1]{Prokhorov-factorial-Fano-e}, we obtain that the multiplicity of the
discriminant curve $\Delta\subset\PP^2$ at the point $o:=\pi (P')$ is at least 3.
Let $\LLL$ be a pencil of lines on $\PP^2$ passing through $o$.
Consider a standard model $\pi^\bullet:X^\bullet\to S^\bullet$ of the fibration
$\pi$ \cite{Sarkisov-1982-e}.
Let $\Delta^\bullet $ be the discriminant curve.
We may assume that there is a (birational) morphism $\varphi:S^\bullet\to\PP^2$
and the proper transform $\LLL^\bullet$ of the pencil $\LLL$ is base point free.
Since the set-theoretic intersection of the general element $L\in\LLL$
and $\Delta$ consists of at most three points, we have $\LLL^\bullet\cdot\Delta^\bullet\le 3$.
According to the Iskovskikh theorem (see, e.g., \cite{Iskovskikh-1987} or
\cite[Proposition 5.6]{P:rat-cb:e}) the variety $X^\bullet $ is rational, a contradiction.

Consider the case of $\uprho(X)\ge 2$.
Then there are at least two different extremal contractions of $f:X\to Y$ and $ f':X\to Y'$ on $X$.
By our assumption and Corollary~\ref{cor:rat} they have types \type{E_5}, \type{C_1} or \type{D_1}.
Consider these cases consecutively.

\subsection*{Set-up}
In our situation, let $H:=-K_X$.
Then $-K_X^3=2g-2$, where $g=\g(X)$ is the genus of $X$.
We introduce the following notation: $H_Y$ and $H_Y'$ are the ample generators of $\Pic(Y)$ and $\Pic(Y')$, respectively.
Put $M:=f^*H_Y$ and $M':=f'^*H_Y'$.

\subsection*{Type \type{E_5}}
Suppose that one of the extremal contractions, say $f$, is of type \type{E_5}.
Let $E$ be its exceptional divisor. Then $E\simeq\PP^2$.
If another contraction $f'$ has a two-dimensional fiber $F$, then $F\cap E\neq\varnothing$.
So $E$ and $F$ meet each other along a curve that must be contracted by both $f$ and $f'$.
This is impossible. Therefore, $f'$ is of type \type{C_1}. Since $E\simeq\PP^2$ dominates $Y'$, we have $Y'\simeq\PP^2$.
In particular, $\uprho(X)=2$.
Since $\Pic(X)=\ZZ\cdot H\oplus\ZZ\cdot M'$, we can write
$E\sim a H- bM'$.
We have the following relations (see, e.g., \cite[Lemma~4.1.6]{IP99}):
\begin{eqnarray*}
M'^3 &=& 0,\quad H\cdot M'^2=2,\quad H^2\cdot M'=12-d',
\\
E^3 &=& a^3 (2g-2) -3a^2b (12-d')+6ab^2=4,
\\
H\cdot E^2 &=& a^2 (2g-2) -2ab (12-d')+2b^2=-2,
\\
H^2\cdot E &=& a (2g-2) -b (12-d')=1.
\end{eqnarray*}
This system of Diophantine equations has no solutions.

\subsection*{Type \type{D_1}}
Suppose that one of the extremal contractions, say $f$, is of type \type{D_1}, i.e. $f$ is a del Pezzo fibration.
Let $F$ be its fiber and let $d:=K_{F}^2$.
Then $\uprho(X)=2$ and the second contraction of $ f'$ must be a conic bundle over $Y'=\PP^2$.
Let $\Delta'\subset Y'$ be the discriminant curve and let $d':=\deg\Delta'$.
We have the following relations (see, e.g., \cite[Lemma~4.1.6]{IP99}):
\begin{equation*}
M^3=M'^3=M^2\cdot H=0,\quad H\cdot M'^2=2,\quad H^2\cdot M=d,\quad H^2\cdot M'=12-d'.
\end{equation*}
Since $\Pic(X)=\ZZ\cdot H\oplus\ZZ\cdot M=\ZZ\cdot H\oplus\ZZ\cdot M'$, we can write
$M'\sim a H- M$. Then
\begin{eqnarray*}
{\textstyle\frac1a} M'^3 &=& a^2 (2g-2) -3ad=0,
\\
M'^2\cdot H &=& a^2 (2g-2) -2ad=2.
\end{eqnarray*}
We obtain a unique solution: $a=1$, $d=2$, $g=4$, $d'=8$.
The morphism
\[
h=f\times f':X\to V\subset\PP^1\times\PP^2
\]
is finite of degree $M\cdot M'^2=2$.
The branch divisor is computed using the Hurwitz formula.
We get the case~\ref{fact:P1P2}

\subsection*{Types \type{C_1}-\type{C_1}}
Consider the case where $\uprho(X)=2$ and both contractions of $f$ and $ f'$ are conic bundles.
Then $Y\simeq Y'\simeq\PP^2$. Let $\Delta\subset Y$ and $\Delta'\subset Y'$ be the corresponding discriminant curves, let $d:=\deg\Delta$,
and let $d':=\deg\Delta'$.
We have the following relations (see, e.g., \cite[Lemma~4.1.6]{IP99}):
\[
M^3=M'^3=0,\quad H\cdot M^2=H\cdot M'^2=2,\quad H^2\cdot M=12-d,\quad H^2\cdot M'=12-d'.
\]
Since $\Pic(X)=\ZZ\cdot H\oplus\ZZ\cdot M=\ZZ\cdot H\oplus\ZZ\cdot M'$, we can write
$M'\sim a H- M$. Then
\[
{\textstyle\frac1a} M'^3=a^2 (2g-2) -3a (12-d)+6=0
\]
and, similarly, $ a^2 (2g-2) -3a (12-d')+6=0$. Subtracting the first equality from the second one, we obtain $d=d'$.
We also have
\[
12-d=H^2\cdot M=a (2g-2) - (12-d).
\]
We obtain a unique solution: $a=1$, $d=6$, $g=7$.
As above, the morphism
\[
h=f\times f':X\to V\subset\PP^2\times\PP^2
\]
is finite onto its image $V\subset\PP^2\times\PP^2$.
If it is birational, then $V$ is divisor of bidegree $(2,2)$ (since $M^2\cdot M'=M'^2\cdot M=2$).
Since by the adjunction formula $K_X=h^*K_V$, the map $h$ is an isomorphism in codimension 1.
By the Serre criterion, the variety $V$ is normal and $h$ is an isomorphism.
We get the case~\ref{fact:P2P2:22}.

Suppose that the morphism $h$ is not birational. Then it is of degree 2 and $V$ is a divisor of bidegree $(1,1)$
(again because $M^2\cdot M'=M'^2\cdot M=2$).
In this case, the variety $V$ is normal and, according to the Hurwitz formula, the branch divisor belongs to the anticanonical linear system.
We get the case~\ref{fact:P2P2:2}.

\subsection*{Case $\uprho(X)>2$}
Finally, we assume that $\uprho(X)>2$ and all extremal contractions on $X$ are conic bundles.
By Lemma~\ref{lemma:rho=3:negative}, the base of any extremal contraction $f:X\to Y$ is isomorphic to $\PP^1\times\PP^1$.
This implies that $\uprho(X)=3$. Let $ l $ be one of the generators $Y=\PP^1\times\PP^1$ and let $F:=\pi^{-1}(l)$. Then
$F$ is a del Pezzo surface of degree $d:=K_X^2\cdot F$.
Let $\{F_1,\dots, F_n\}$ be the set of all such divisors (for various extremal contractions and generator $\PP^1\times\PP^1$).
For definiteness, we put $F_1=F$.
Since any effective divisor is nef on $X$, the cone of effective divisors $\EFF(X)\subset\Pic(X)_{\mathbb R}=\mathbb R^3$ is polyhedral and generated by the classes of divisors $F_i$.
Moreover, the linear system $|F_i|$ defines a del Pezzo fibration $h_i:X\to\PP^1$.
Each face of the cone $\EFF(X)$ is generated by two divisors $F_i$ and $F_{i+1}$ (with appropriate numbering).
If $F_i\cap F_j=\varnothing$, then $F_j$ is contained 
in the fibers of $h_i$, i.e. $F_i=F_j$.
If $F_i\cap F_j\neq\varnothing$, then $F_i\cap F_j$ is a conic on $F_i$ (and $F_j$) and the linear system $|F_i+F_j|$
defines a contraction $X\to\PP^1\times\PP^1$ with fiber $F_i\cap F_j$.
Since the image of the restriction map $\Pic(X)\to\Pic(F_i)$ is two-dimensional, up to linear equivalence there are exactly two conics on $F_i$ of the form $F_i\cap F_j$. Therefore, $n=3$, i.e. the cone $\EFF(X)$ has exactly three edges and they are generated by the divisors $F_1$, $F_2$, $F_3$.

Since the group $\Pic(X)$ is generated by the classes of divisors $H$, $F_1$, and $F_2$, we can write
\begin{equation*}
F_3=a_1H-b_1F_1-c_1F_2,\qquad a_i,\, b_i,\, c_i\in\ZZ,\quad a_i>0.
\end{equation*}
Similarly,
\begin{equation*}
F_1=a_2H-b_2F_2-c_2F_3,
\end{equation*}
\begin{equation*}
F_2=a_3H-b_3F_3-c_3F_1.
\end{equation*}
Then
\begin{equation*}
F_1=(a_2-c_2a_1) H+c_2b_1F_1+(c_2c_1-b_2) F_2,
\end{equation*}
\begin{equation*}
F_2=(a_3-b_3a_1) H+(b_3b_1-c_3) F_1+b_3c_1F_2,
\end{equation*}
\begin{equation*}
a_2-c_2a_1=0,\quad 1=c_2b_1,\quad c_2c_1-b_2=0,\quad a_3-b_3a_1=0,\quad b_3b_1-c_3=0,\quad 1=b_3c_1.
\end{equation*}
We obtain
\[
c_1=c_2=c_3=b_1=b_2=b_3=1,\qquad a_1=a_2=a_3.
\]
Thus,
\[
aH=F_1+F_2+F_3.
\]
Then
\[
K_{F_1}^2=H^2\cdot F_1=\textstyle\frac 1a H\cdot (F_1+F_2+F_3)\cdot F_1=\textstyle\frac 4a=K_{F_2}^2=K_{F_3}^2.
\]
In particular, $K_{F_i}^2=1$, $2$ or $4$. If $K_{F_i}^2=1$, then the base locus of the linear system $|-K_X|$ consists of three rational curves which are
sections of the fibrations $h_i$. This contradicts Theorem~\ref{thm:bs}.
Similarly, if $K_{F_i}^2=2$, then the anticanonical map is of degree $2$ on each surface of $F_i$
and maps this surface to a plane. But then the anticanonical image of the variety $X$ contains three distinct families of planes.
Clearly this is impossible. Therefore, $K_{F_i}^2=4$. Then the morphism $h_1\times h_2\times h_3:X\to\PP^1\times\PP^1\times\PP^1$
is a double cover.
We get the case~\ref{fact:P1P1P1}. This completes the proof of the classification part of Theorem~\ref{th:factorial}.

The nonrationality of smooth three-dimensional cubic hypersurfaces (case~\ref{fact:cubic}) is well-known \cite{Clemens-Griffiths}.
Any variety of type~\ref{fact:X14} is birationally equivalent to a cubic \cite{IP99}.
In cases~\ref{fact:iota=2} nonrationality follows from the results of \cite{Voisin1988} and \cite{Grinenko2004}.
Varieties
\ref{fact:iota=1} are not rational according to
\cite{Iskovskih-Manin-1971b}, \cite{Iskovskikh-Pukhlikov-1996}, and
\cite{Beauville1977}.

Non-singular threefolds of types~\ref{fact:P1P2}-\ref{fact:P1P1P1} have structures of standard conic bundles with ``large'' discriminant curve.
Then nonrationality follows from the results of \cite{Beauville1977}, \cite{Tjurin1979}, \cite{Shokurov1983}, see also \cite{Alzati-Bertolini-1992a}.
\end{proof}

\def\cprime{$'$}

\end{document}